\newtheorem{theorem}{Theorem}[section]
\newtheorem{proposition}[theorem]{Proposition}
\newtheorem{corollary}[theorem]{Corollary}
\newtheorem{lemma}[theorem]{Lemma}
\theoremstyle{definition}
\newtheorem{definition}[theorem]{Definition}
\newtheorem{remark}[theorem]{Remark}
\newtheorem{assumption}[theorem]{Assumption}
\newcommand{\Minimize}{\mathop{\rm Minimize}\limits}
\begin{document}

\title[Singular perturbation for adhesive obstacle problem]{Singular perturbation by bending for an adhesive obstacle problem}
\author{Tatsuya Miura}
\address{Graduate School of Mathematical Sciences, University of Tokyo, 3-8-1 Komaba, Meguro,
Tokyo, 153-8914 Japan}
\email{miura@ms.u-tokyo.ac.jp}
\keywords{Singular limit; Obstacle problem; Free boundary problem; $\Gamma$-convergence; Higher order perturbation; Adhesion energy.}
\subjclass[2010]{35B25, and 35R35}

\begin{abstract}
A free boundary problem arising from materials science is studied in the one-dimensional case. The problem studied here is an obstacle problem for the non-convex energy consisting of a bending energy, tension and an adhesion energy. If the bending energy, which is a higher order term, is deleted then ``edge'' singularities of the solutions (surfaces) may occur at the free boundary as Alt-Caffarelli type variational problems. The main result of this paper is to give a singular limit of the energy utilizing the notion of $\Gamma$-convergence, when the bending energy can be regarded as a perturbation. This singular limit energy only depends on the state of surfaces at the free boundary as seen in singular perturbations for phase transition models.
\end{abstract}

\maketitle

\section{Introduction}

\subsection{Model and main results}

Let us consider a non-convex higher order variational problem in the one-dimensional case, which is the obstacle problem for the energy as proposed in \cite{PL08}:
\begin{eqnarray}\label{problem1}
	\Minimize_{u\geq\psi}:\ E_\varepsilon[u]=\varepsilon^2 \int \kappa^2 ds + \int ds - \int_{\{u=\psi\}} (1-\alpha)\ ds.
\end{eqnarray}
\noindent
Here a smooth function $\psi:[a,b]\rightarrow\mathbb{R}$ called an {\it obstacle} (function), a constant coefficient $\varepsilon>0$, and a continuous function $\alpha:[a,b]\rightarrow(0,1)$ are given.
The function $u$ is an admissible function constrained above the obstacle, and $\kappa$, $s$ denote the curvature and the arclength of the graph of $u$ respectively. 
The first term of the energy is called {\it bending energy}, the second term {\it tension}, the third term {\it adhesion energy}, and $\alpha$ is called {\it adhesion coefficient}. 
The multiple constant of the tension is normalized to one. 
The adhesion coefficient $\alpha$ can be inhomogeneous so that it may depend on a space variable. 
According to \cite{PL08}, this problem is motivated to determine the shape of membranes, interfaces or filaments on rippled surfaces (as Figure \ref{membranes}) in certain mesoscopic or nearly mesoscopic settings.
The coefficients $\varepsilon$ and $\alpha$ and the obstacle function $\psi$ depend on the setting of materials, scaling and so on. 
In this case the graph of $u$ is a membrane. 
In this paper we consider the one dimensional model so when one considers a membrane it depends on only one direction and invariant in other direction in our setting. 
In addition, we regard the bending energy which is a higher order term as a perturbation, that is, only consider for sufficiently small $\varepsilon>0$.

\begin{figure}[htbp]
	\begin{tabular}{cc}
		\begin{minipage}{0.45\hsize}
			\begin{center}
				\includegraphics[width=50mm]{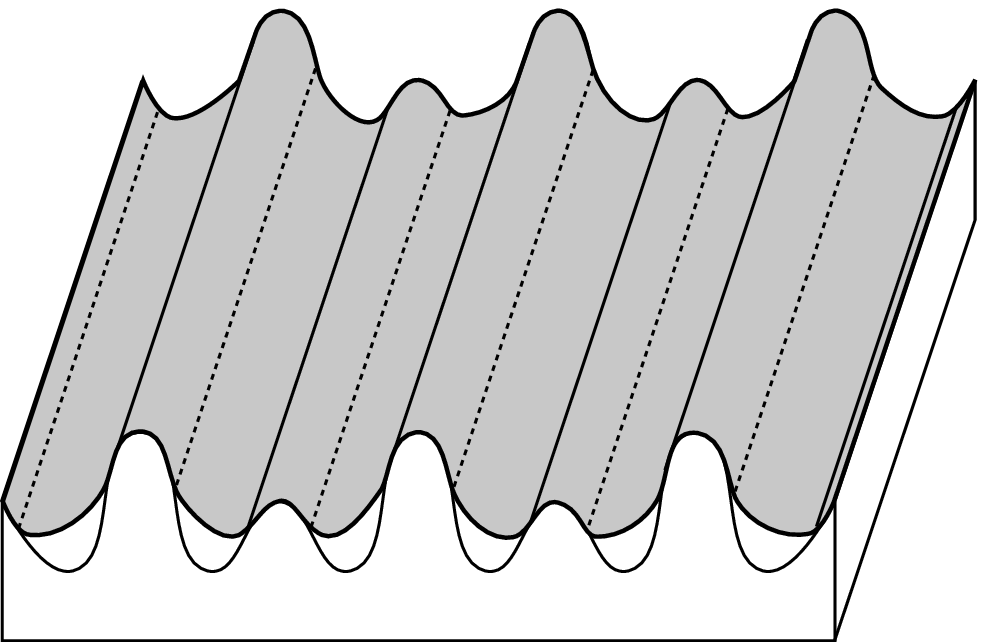}
			\end{center}
		\end{minipage}
		\begin{minipage}{0.45\hsize}
			\begin{center}
				\includegraphics[width=50mm]{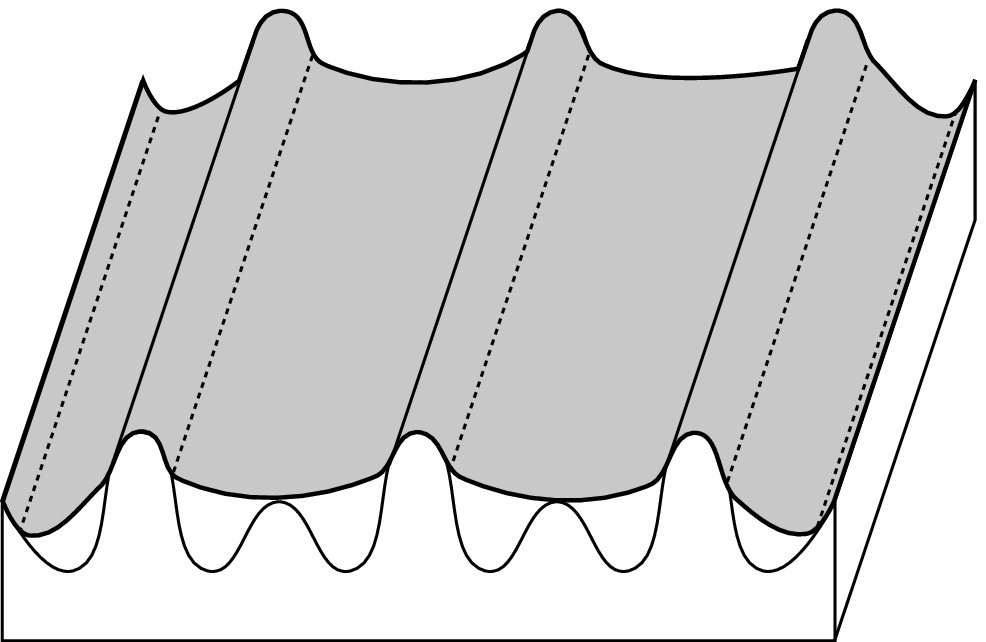}
			\end{center}
		\end{minipage}
	\end{tabular}
	\caption{membranes on rippled surface \label{membranes}}
\end{figure}

A characteristic point of this energy is to contain the adhesion energy. 
By this energy, surfaces shall reasonably adhere to the obstacle in order to decrease the energy. 
Accordingly, there may occur patterns as drawn in Figure \ref{membranes} in this model. 
However that law is complicated. 
One of physical and mathematical concerns is to perceive such pattern formation.

We shall first take the simplest approximation $\varepsilon=0$ in order to consider the case $\varepsilon$ is sufficiently small. 
This approximation simplifies the problem (\ref{problem1}) thus we can obtain many fine properties for minimizers (this is one of the important results of this paper, see Theorem \ref{regezero}). 
It is rigorously stated in Theorem \ref{regezero}, but roughly speaking the shape of any minimizer of $E_0$ is as drawn in Figure \ref{membrane}. 
There occur ``edge'' singularities at the free boundary and their angles are determined by the adhesion coefficient at the place of the free boundary, symbolically $\cos\theta=\alpha$ (Young's equation). 
This condition has been formally given in \cite{PL08}.

However, even if $\varepsilon=0$, the energy is not convex and may admit that there exist multiple minimizers lacking consistency in their shape. 
That is to say, for instance, either of two different states as drawn in Figure \ref{membrane} might be a minimizer for the same energy. 
Since this minimizing problem is considered to be a physical model, this non-uniqueness may be due to some small effect, perhaps, of higher order terms. 
Therefore, restoring the effect of the bending energy, it is expected to ameliorate the approximation.

\begin{figure}[htbp]
	\begin{tabular}{cc}
		\begin{minipage}{0.45\hsize}
			\begin{center}
				\includegraphics[width=50mm]{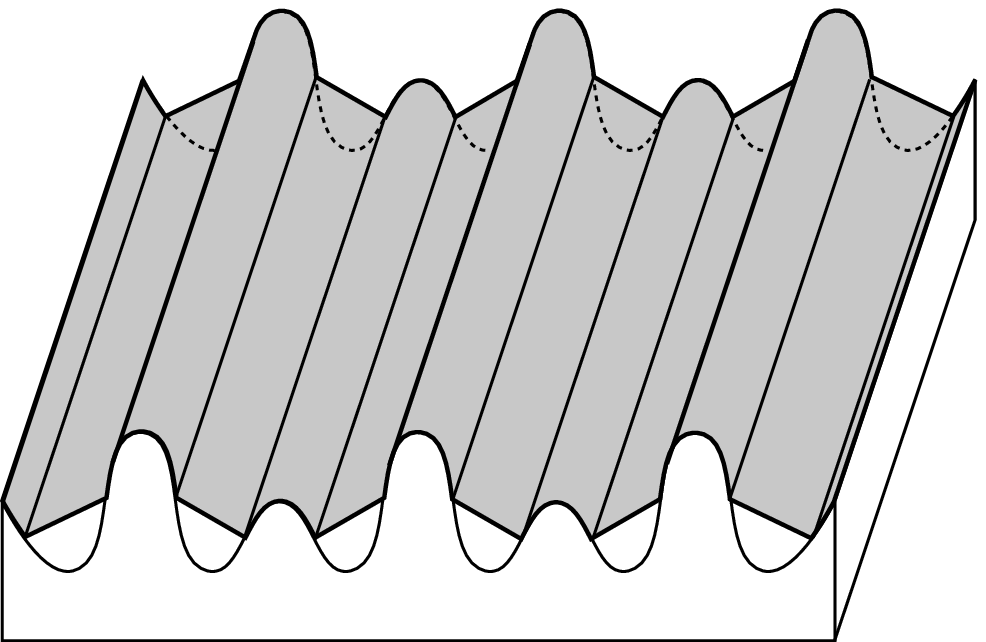}
			\end{center}
		\end{minipage}
		\begin{minipage}{0.45\hsize}
			\begin{center}
				\includegraphics[width=50mm]{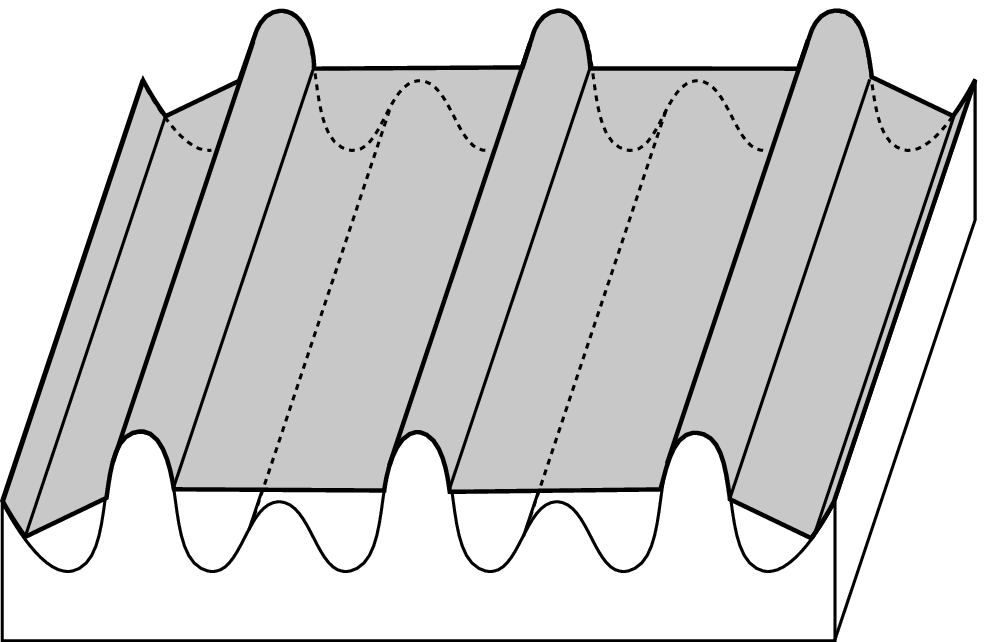}
			\end{center}
		\end{minipage}
	\end{tabular}
	\caption{minimizers in the case $\varepsilon=0$ \label{membrane}}
\end{figure}

The main goal of this paper is a formulation of a singular perturbation by the bending energy, that is, to characterize the limit of minimizers of $E_\varepsilon$ as $\varepsilon\downarrow0$ rigorously. 
To this end we utilize the notion of $\Gamma$-convergence established by De Giorgi \cite{DGFr75} in the 70's (more precisely, see \cite{Br06}, \cite{Br14}, \cite{DM93}, \cite{DM11}) which is a convergence of energy functionals for minimizing problems. 
The idea of this convergence is to identify the first nontrivial term in an asymptotic expansion for the energy of perturbed problems. 
Our main singular limit result is rigorously stated in Theorem \ref{gammathm}, but roughly speaking as follows:

\begin{theorem}[Singular limit]\label{mainthm1}
	If we define the singular limit energy $F$ by
	$$F[u]:=\int_{\partial\{u>\psi\}}4(\sqrt{2}-\sqrt{1+\alpha})\ d\mathcal{H}^0,$$
	then the $\Gamma$-convergence holds with respect to $W^{1,1}$-norm:
	$$\frac{1}{\varepsilon}(E_\varepsilon-\textstyle\inf_X{E_0})\xrightarrow{\ \Gamma\ } F\quad\text{as}\quad\varepsilon\downarrow0,$$
	where $X$ is a certain space of admissible functions $X\subset W^{1,1}(a,b)$.
\end{theorem}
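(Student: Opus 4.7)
The key algebraic input is an identity that exploits the Young relation $\alpha=\cos\theta_0$. I would first reformulate $E_\varepsilon$ using the tangent angle $\theta(s)$ of the graph (so $\kappa=\theta'$) and arclength $s$: on the contact region $\{u=\psi\}$ the energy density is $\alpha\,ds$, while on the free region it is $(\varepsilon^2(\theta')^2+1)\,ds$. Fixing a free boundary point $x_0$ of the limit configuration and comparing a smooth profile with the Young-angle corner having the same endpoints on the interval between them, a direct computation (using $\alpha=\cos\theta_0$ together with $\int\cos\theta\,ds$ and $\int\sin\theta\,ds$ for endpoint matching) yields the exact identity
\[
E_\varepsilon[u]-E_0[\text{corner}]=\int_{\text{free}}\bigl[\varepsilon^2(\theta')^2+(1-\cos(\theta-\theta_0))\bigr]\,ds,
\]
which is the basis of both halves of the $\Gamma$-convergence.

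For the $\liminf$ inequality, let $u_\varepsilon\to u$ in $W^{1,1}$ with $\liminf_\varepsilon(E_\varepsilon[u_\varepsilon]-\inf E_0)/\varepsilon<\infty$. One first observes that $u$ must then be an $E_0$-minimizer (otherwise the left-hand side diverges), which by Theorem~\ref{regezero} is piecewise affine with finitely many free boundary points $x_1,\dots,x_N$ of Young angles $\theta_i=\arccos\alpha(x_i)$. I would cover each $x_i$ by a small disjoint interval $I_i$ on which $\alpha\approx\alpha(x_i)$ and $\psi$ is nearly affine, apply the above identity on $I_i$, and then use the pointwise Bogomol'nyi-type bound $\varepsilon^2(\theta')^2+(1-\cos(\theta-\theta_i))\ge 2\varepsilon|\theta'|\sqrt{1-\cos(\theta-\theta_i)}$. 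Since $\theta_\varepsilon$ must cross from values near $0$ (contact side) to values near $\theta_i$ (free side) inside $I_i$, the change of variables $s\mapsto\theta$ gives
\[
E_\varepsilon[u_\varepsilon;I_i]-E_0[\text{corner};I_i]\ \ge\ 2\varepsilon\!\int_0^{\theta_i}\!\sqrt{1-\cos(\phi-\theta_i)}\,d\phi\ =\ 4\varepsilon\bigl(\sqrt2-\sqrt{1+\alpha(x_i)}\bigr).
\]
Summing over $i$ and dividing by $\varepsilon$ then yields $\liminf_\varepsilon(E_\varepsilon[u_\varepsilon]-\inf E_0)/\varepsilon\ge F[u]$.

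For the recovery sequence I would take an $E_0$-minimizer $u$ and, near each free boundary point $x_i$, glue in a rescaled boundary layer of width $O(\varepsilon)$ obtained by solving the ODE saturating Bogomol'nyi, namely $\varepsilon\theta'(s)=\sqrt{1-\cos(\theta-\theta_i)}$ with $\theta(-\infty)=0$ and $\theta(+\infty)=\theta_i$; this ODE integrates explicitly and produces a smooth monotone profile that leaves the obstacle tangentially and straightens into the outer affine piece of $u$. After a $C^1$ glue truncating the exponentially decaying tail on an $O(\varepsilon|\log\varepsilon|)$ scale, one obtains $u_\varepsilon\in X$ with $u_\varepsilon\to u$ in $W^{1,1}$, and the same identity together with saturation of Bogomol'nyi gives $(E_\varepsilon[u_\varepsilon]-\inf E_0)/\varepsilon\to F[u]$.

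The main technical difficulty will be enforcing the obstacle constraint $u_\varepsilon\ge\psi$ throughout both steps: for the recovery sequence this is delicate when $\psi$ is curved near $x_i$, and is handled by a local modification of the glue that raises $u_\varepsilon$ off of $\psi$ at an $o(\varepsilon)$-energy cost; for the $\liminf$ one must rule out oscillatory behaviour of $u_\varepsilon$ near each $x_i$ (which could generate spurious ``free boundaries'' carrying curvature energy without sweeping the full Young angle), and handle the inhomogeneity of $\alpha$ by covering each limit free boundary point with intervals of shrinking length on which the Young angle is nearly constant.
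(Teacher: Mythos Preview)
Your analytic core --- the tangent-angle formulation, the Bogomol'nyi bound $\varepsilon(\theta')^2+\varepsilon^{-1}(1-\cos(\theta-\theta_0))\ge 2|\theta'|\sqrt{1-\cos(\theta-\theta_0)}$, and the primitive $2\int_0^{\theta_0}\sqrt{1-\cos\phi}\,d\phi=4(\sqrt 2-\sqrt{1+\cos\theta_0})$ --- is exactly the paper's Lemma~5.11 rewritten in the variable $\theta=\arctan v_x$, and your recovery ODE is the paper's equation~(6.1) in arclength. So the energetic computation and the limsup construction are correct and match the paper.

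The gap is in the $\liminf$ reduction. Your identity holds only when the profile and the corner share endpoints on $I_i$, so the ``corner'' depends on $u_\varepsilon$; to replace $E_0[\text{corner}]$ by $\inf_X E_0$ you need a \emph{global} admissible competitor, and patching local corners onto $\bar u$ outside $\bigcup I_i$ gives neither a continuous function nor one that is guaranteed to lie above $\psi$. The paper resolves this not by localising $\bar u$ but by modifying the \emph{sequence}: it shows (Lemma~5.5) that $u^\varepsilon$ must touch $\psi$ in every short subinterval of $(a,b)_0^{\bar u}$ (else $F_\varepsilon\to\infty$), then flattens $u^\varepsilon$ down to $\psi$ there to obtain a function $\delta$-associated with $\bar u$ (Lemmas~5.6--5.7), and finally compares to the global competitor $u_\delta$ that is a segment on each free interval of this modified $u^\varepsilon$ (Proposition~5.9). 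This machinery is precisely what you flag as the ``main technical difficulty'' but do not supply.

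Second, your claim that ``$\theta_\varepsilon$ must cross from values near $0$ to values near $\theta_i$ inside $I_i$'' is not justified: $W^{1,1}$-convergence gives neither that $u_\varepsilon$ touches $\psi$ on the contact side of $I_i$, nor that $\theta_\varepsilon$ attains $\theta_i$ pointwise on the free side. The paper handles the contact-side issue via Lemma~5.5, and avoids the free-side issue altogether by working over the \emph{entire} free interval of the modified $u^\varepsilon$ rather than a one-sided neighbourhood of $\bar x_i$: since both endpoints of that interval lie on $\psi$, Rolle's theorem (Lemma~5.10) forces the curve to be tangent to the chord direction somewhere, and the estimate is split in two about that point. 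Your one-sided $I_i$ contains a single contact, so no such Rolle argument is available.
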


\noindent
From this theorem, the following asymptotic expansion is also valid for our problem in some sense (see \cite{AnBa93}, \cite{BrTr08}):
$$(\inf{E_\varepsilon})=(\inf{E_0})+\varepsilon(\inf{F})+o(\varepsilon).$$
Thus it means that $F$ is the main effect of the bending energy and its quantity only depends on the $\mathcal{H}^0$-measure (the number) of the free boundary and the adhesion coefficient there. 
Since the adhesion coefficient at the free boundary determines the contact angle there ($\cos\theta=\alpha$) as mentioned above, it also means that the effect is determined by the number of ``edge'' singularities and their angles. 
Thanks to our singular limit result and geometrical intuition for the minimizing problem of $F$, we can easily find a key effect of the bending energy. 
For example, if $\alpha\equiv\text{const.}$ and there are multiple minimizers of $E_0$ then the one which has less number of edges shall be a minimizer of $E_\varepsilon$ when $\varepsilon>0$ is sufficiently small. 
More precisely, by Theorem \ref{mainthm1} and the fundamental theorem of $\Gamma$-convergence, we obtain the following:

\begin{corollary}\label{limitcoro}
	If $u^\varepsilon\to u$ in $W^{1,1}$ and any $u^\varepsilon$ is a minimizer of $E_\varepsilon$ in (\ref{problem1}), then $u$ minimizes $F$ among minimizers of $E_0$ in (\ref{problem1}).
\end{corollary}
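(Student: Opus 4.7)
The plan is to derive the corollary as a direct application of the fundamental theorem of $\Gamma$-convergence to the rescaled functionals $F_\varepsilon := \varepsilon^{-1}(E_\varepsilon - \inf_X E_0)$. Since the map $t \mapsto \varepsilon^{-1}(t - \inf_X E_0)$ is affine and strictly increasing for each fixed $\varepsilon > 0$, the minimizers of $F_\varepsilon$ and of $E_\varepsilon$ on $X$ coincide, so each $u^\varepsilon$ minimizes $F_\varepsilon$. By Theorem \ref{mainthm1} we have $F_\varepsilon \xrightarrow{\ \Gamma\ } F$ in $W^{1,1}$, and $u^\varepsilon \to u$ in $W^{1,1}$ by hypothesis; the fundamental theorem of $\Gamma$-convergence (see \cite{Br06}, \cite{DM93}) then yields that $u$ minimizes $F$ over $X$.

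It remains to verify that $u$ is itself a minimizer of $E_0$, so that the conclusion ``$u$ minimizes $F$ among minimizers of $E_0$'' is non-vacuous. First I would fix some minimizer $v^\star$ of $E_0$ with $F[v^\star] < \infty$; existence of such a $v^\star$ follows from the regularity of $E_0$-minimizers given in Theorem \ref{regezero}, which restricts $\partial\{v^\star > \psi\}$ to a finite set so that the formula for $F$ gives a finite value. The $\limsup$ part of $\Gamma$-convergence produces a recovery sequence $v_\varepsilon \to v^\star$ with $F_\varepsilon[v_\varepsilon] \to F[v^\star]$, whence $E_\varepsilon[v_\varepsilon] = \inf_X E_0 + \varepsilon F_\varepsilon[v_\varepsilon] \to \inf_X E_0$. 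Minimality $F_\varepsilon[u^\varepsilon] \leq F_\varepsilon[v_\varepsilon]$ combined with the $\liminf$ inequality gives $F[u] \leq \liminf_\varepsilon F_\varepsilon[u^\varepsilon] \leq F[v^\star] < \infty$, so that $E_\varepsilon[u^\varepsilon] \to \inf_X E_0$ as well. Using $E_\varepsilon \geq E_0$ pointwise (the bending term is nonnegative) together with the lower semicontinuity of $E_0$ in $W^{1,1}$, one concludes $E_0[u] \leq \liminf_\varepsilon E_0[u^\varepsilon] \leq \inf_X E_0$, so $u$ minimizes $E_0$.

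The main delicate point in this plan is the lower semicontinuity of $E_0$ in $W^{1,1}$. The length term is convex and hence lsc, but the adhesion term $-\int_{\{u=\psi\}}(1-\alpha)\,ds$ depends on the coincidence set, which a priori could lose mass under weak-type convergence. However, the Sobolev embedding $W^{1,1}(a,b) \hookrightarrow C^0[a,b]$ promotes $u^\varepsilon \to u$ to uniform convergence, which forces $\{u > \psi\} \subset \liminf_\varepsilon \{u^\varepsilon > \psi\}$, equivalently $\mathcal{L}^1(\{u=\psi\}) \geq \limsup_\varepsilon \mathcal{L}^1(\{u^\varepsilon = \psi\})$; combined with $1-\alpha > 0$ and the smoothness of $\psi$, this gives the required semicontinuity of the adhesion term. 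Once this is in place, the rest is the standard abstract $\Gamma$-convergence machinery applied to $F_\varepsilon$, and no further analysis of the concrete energy is needed.
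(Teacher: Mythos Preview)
Your approach is correct and is precisely the one the paper takes: the paper simply invokes the fundamental theorem of $\Gamma$-convergence together with the observation that minimizing $E_\varepsilon$ and minimizing $F_\varepsilon$ are equivalent (stated after Theorem~\ref{gammathm}), without spelling out further details.

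Two remarks on your write-up. First, your second paragraph does more work than needed: by the precise definition of $F$ in \eqref{fzero1}, $F[u]<\infty$ already \emph{forces} $u\in\mathrm{argmin}_{X^{1,1}}E_0$, so once the fundamental theorem gives $F[u]\le F[v^\star]<\infty$ you are done; the separate argument via $E_\varepsilon\ge E_0$ and lower semicontinuity is valid but redundant. Second, the lower semicontinuity of $E_0$ that you sketch is exactly Proposition~\ref{lscezero} in the paper, proved there by writing $E_0[u]=\int \vartheta_\alpha(x,u-\psi)\sqrt{1+u_x^2}\,dx$ with $\vartheta_\alpha(x,\cdot)$ lower semicontinuous and applying Fatou along an a.e.\ convergent subsequence; this is cleaner than splitting off the adhesion term and tracking coincidence sets.
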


\noindent It turns out that we can characterize the limit of minimizers of $(\ref{problem1})$ as $\varepsilon\downarrow0$ by our theorem.

An important point of our proof of Theorem \ref{mainthm1} is to prove the liminf condition of $\Gamma$-convergence (see Definition \ref{gammaconv}). 
The proof is mainly separated into two parts. 
The first part is to prove that it suffices to consider more regular sequences which are  ``close'' to a minimizer of $E_0$ in some sense. 
To state it rigorously, we introduce a notion called {\it $\delta$-associate} which explains closeness of functions. 
Especially, the part regarding how to coincide with $\psi$ is a key point because adhering or detaching leads to a discontinuous transition in our energy. 
We replace a general sequence by $\delta$-associate of $W^{2,1}$-regularity converging to a minimizer of $E_0$ so that all quantities in the energy are well-defined with no increase of the energy. 
The second part is to obtain a lower estimate for functions $\delta$-associated with a minimizer of $E_0$. 
By this procedure, we are able to handle the energy geometrically and establish a Modica-Mortola type inequality to prove the liminf condition.

\subsection{Related problems}

Now let us briefly survey some mathematical problems related to our problem from two viewpoints.

\subsubsection{Viewpoint of energies and settings}

In the non-adhesive case $\alpha\equiv1$, the energy in (\ref{problem1}) is consisting of the total squared curvature functional and the length functional:
\begin{eqnarray}\label{problem2}
	\varepsilon^2 \int_\gamma \kappa^2 ds + \int_\gamma ds,
\end{eqnarray}
so-called {\it Euler's elastic energy}. 
The critical points of the energy are called {\it elastica} (usually under boundary conditions and the length constraint for a curve $\gamma$). 
This problem is first considered by Euler in 1744 \cite{Euler1744}. 
Numerous authors have considered this variational problem or related problems under various constraints in order to analyze the configuration of elastic bodies (see \cite{Lo44}, \cite{Si08}, or \cite{Sa08} including a very well-written summary of the history of elastic problems by one section). 
However, there are still many unclear points in this problem because of difficulties of a higher order problem. 
Our problem is the variational problem for the Euler's elastic energy with the adhesion term under an obstacle-type constraint, thus it seems to have similar difficulties.
To circumvent such difficulties, we regard the bending energy as a perturbation in this paper.

Obstacle problems, which are variational problems under obstacle-type constraints, are motivating problems invoking free boundaries and have been studied for a long time (\cite{Ca98}, \cite{CaFr79}, \cite{PeShUr14}, \cite{Ro87}). 
A typical model is for the area functional (or the Dirichlet energy, the linearized version):
\begin{eqnarray}\label{problem3}
	\Minimize_{u\geq\psi}:  \int_\Omega\sqrt{1+|\nabla u|^2}\ dx\quad\left({\rm or}\quad\frac{1}{2}\int_\Omega|\nabla u|^2dx\right)
\end{eqnarray}
under a boundary condition.
Here $u$ is a function on a bounded smooth domain $\Omega\subset\mathbb{R}^n$. 
This is so-called the unilateral Plateaux problem. 
This problem invokes the free boundary $\partial{\{u>\psi\}}$ and in the non-coincidence set $\{u>\psi\}$ the graph of a solution $u$ is a minimal surface (or harmonic). 
Generally, if an energy is convex, bounded and coercive in a sense as (\ref{problem3}) then the classical variational inequality approach can work.
Thus we can obtain many fine properties of solutions (at least in the linearized case), for instance the uniquely existence, further the regularity of solution and of its free boundary \cite{Ca98}. 
The problem (\ref{problem3}) corresponds to the case no bending $\varepsilon=0$ and no adhesion $\alpha\equiv1$ in our problem (\ref{problem1}). 
In the problem (\ref{problem3}), we only obtain a natural singular limit even if the energy is perturbed by bending \cite[Theorem 9.5]{Ro87}.
Therefore, in the non-adhesive case, it seems that higher order terms can be neglected when $\varepsilon\ll1$ unlike our problem.

Our problem is closely related to the Alt-Caffarelli type variational problem \cite{AlCa81} which is a model of cavitation:
\begin{eqnarray}\label{problem4}
	\Minimize_{u}: \frac{1}{2}\int_\Omega|\nabla u|^2dx+\int_{\{u>0\}}Q^2dx
\end{eqnarray}
under a boundary condition, where $Q$ is a certain function. 
This is not an obstacle problem explicitly but the solutions are automatically constrained above zero, thus it is equivalent to the problem with the constraint $u\geq\psi\equiv0$ (flat obstacle problem). 
This problem is one of important interface models and has been generalized variously (for example, see \cite{MoWa14} generalizing the first term, \cite{ArTe13} the second term, or their references). 
Especially, Yamaura \cite{Ya94} considered a non-linearized case, namely the case that the first term is replaced to the area functional. 
Our problem is a generalization of the (one-dimensional) non-linearized Alt-Caffarelli problem regarding obstacle. 
Indeed, if $\varepsilon=0$ and $\psi\equiv0$ in (\ref{problem1}) then it is equivalent to
\begin{align*}
	\Minimize_{u\geq0}: \int_a^b\sqrt{1+u_x^2}\ dx+\int_{\{u>0\}}(1-\alpha)\ dx
\end{align*}
since $u_x\equiv0$ in $\{u=0\}$. 
Thus the result in this paper is particularly valid for (\ref{problem4}) with area functional instead of the Dirichlet energy and with continuous $Q:[a,b]\rightarrow(0,1)$.

\subsubsection{Viewpoint of singular perturbations}

In view of singular perturbation by $\Gamma$-convergence for variational problems, there are some related results especially in phase transition models.

One of the most celebrated results is, owing to Modica-Mortola \cite{MoMo77} (and \cite{Mo87}, \cite{St88}), for the energy arising from the van der Waals-Cahn-Hilliard theory of fluid-fluid phase transitions:
\begin{eqnarray}\label{problem5}
	\varepsilon\int_\Omega |\nabla u|^2dx+\frac{1}{\varepsilon}\int_\Omega W(u)dx,
\end{eqnarray}
where $u:\Omega\subset\mathbb{R}^n\rightarrow\mathbb{R}$ satisfying a volume constraint and $W$ is a double-well potential function, usually it is taken as $W(t):=(1-t^2)^2$ ($t\in\mathbb{R}$). 
They proved that the singular limit ($\Gamma$-limit) of the energy is proportional to the area of a transition layer. 
These works are generalized to the vector-valued case, anisotropic cases and multi-well potential cases (see \cite{BeBrRi05} and references cited there).

Furthernore, there are several higher order version results for the following energy:
\begin{eqnarray}\label{problem6}
	\varepsilon\int_\Omega |\nabla^2 u|^2dx+\frac{1}{\varepsilon}\int_\Omega W(\nabla u)dx.
\end{eqnarray}
One of the earliest studies is for the functional arising from the theory of smectic liquid crystals introduced by Aviles-Giga \cite{AvGi87}. 
The authors considered the case that $u:\Omega\subset\mathbb{R}^2\rightarrow\mathbb{R}$ satisfies certain boundary conditions and $W$ is a ``single-circle-well'' potential $W(\xi):=(1-|\xi|^2)^2$ ($\xi\in\mathbb{R}^2$). 

The energy (\ref{problem5}) or (\ref{problem6}) or similar one also arises from, for example the theory of solid-solid phase transitions, thin-films and magnetism.
There are several singular limit results for them (\cite{CoSc06}, \cite{GSMi13}, \cite{Ig12}, \cite{KoMu94}, \cite{SaSe07}). 
However, the number of results of higher order singular perturbations are limited compared with the first order cases.

Our result is one of the higher order singular perturbations and means that the problem (\ref{problem1}) can be regarded as a phase transition model as above.
The cost of transition between the phase $\{u=\psi\}$ and $\{u>\psi\}$ is determined by the place of a transition layer.

As mentioned above, a key point in our proof of the liminf condition of $\Gamma$-convergence is to reduce a general sequence to a sequence which is easy to handle. 
We mention that this concept resembles the ``slicing'' technique used in \cite{FoRy92}.

\subsection{Organization}

This paper is organized as follows: In \S\ref{presec} we prepare some notations and definitions. 
In \S\ref{ezerosec} we first consider the case $\varepsilon=0$ and derive some properties of minimizers of $E_0$. 
They are useful to prove our singular limit result. 
In \S\ref{gammaconvsec} we state our main singular limit theorem (Theorem \ref{gammathm}), and we prove it in \S\ref{liminfsec} and \S\ref{limsupsec}.

\section{Energy and function spaces} \label{presec}

In this section, we give an energy functional by introducing several notations. 
Let $\Omega\subset\mathbb{R}^n$ be a smooth bounded domain, $\psi:\overline{\Omega}\rightarrow\mathbb{R}$ be a smooth function and $g:\overline{\Omega}\rightarrow\mathbb{R}$ be a smooth function satisfying $g\geq\psi$ on a given $\mathcal{H}^{n-1}$-measurable subset $\Sigma$ of the boundary $\partial\Omega$, where $\mathcal{H}^{n-1}$ denotes the $(n-1)$-dimensional Hausdorff measure. 
For any positive integer $m$ and $1\leq p\leq\infty$, we define the space of admissible functions $X^{m,p}(\Omega)(= X^{m,p}_{\psi,g,\Sigma}(\Omega))$ by
\begin{eqnarray}\label{admset}
	X^{m,p}(\Omega) := \left\{u\in W^{m,p}(\Omega)\left|
	\begin{array}{c}
		u\geq\psi\text{ in }\Omega,\\
		u=g\text{ on }\Sigma
	\end{array}
	\right.\right\}.
\end{eqnarray}
The boundary condition is in the sense of trace. 
The space $X^{m,p}(\Omega)$ is a non-empty, convex and closed set in the Sobolev space $W^{m,p}(\Omega)$. 
Usually it is assumed that the partial boundary $\Sigma$ is not empty, however we do not assume in this paper since our problem is not trivial even if $\Sigma=\emptyset$.

Let $\alpha:[a,b]\rightarrow\mathbb{R}$ be a continuous function satisfying $0<\underline{\alpha}\leq\overline{\alpha}<1$, where $\underline{\alpha}:=\min\alpha$ and $\overline{\alpha}:=\max\alpha$.
For $\varepsilon\geq0$, we define the energy functional $E_\varepsilon$ by
\begin{eqnarray}\label{eeps}
	E_\varepsilon[u]:=\varepsilon^2\int_\Omega H_u^2\sqrt{1+|\nabla u|^2}\ dx + \int_\Omega \tilde{\alpha}[u] \sqrt{1+|\nabla u|^2}\ dx,
\end{eqnarray}
where $H_u$ is the mean curvature of $u$:
$$H_u:=\text{div}{\left(\cfrac{\nabla u}{\sqrt{1+|\nabla u|^2}}\right)},$$
the coefficient $\tilde{\alpha}$ is the redefined adhesion coefficient:
$$\tilde{\alpha}[u](\cdot):=\vartheta_\alpha(\ \cdot\ ,u(\cdot)-\psi(\cdot))$$
and $\vartheta_\alpha:[a,b]\times\mathbb{R}\rightarrow[\underline{\alpha},1]$ such that
\begin{align*}
	\vartheta_\alpha(x,y) := 
	\begin{cases}
		1 & (y>0),\\
		\alpha(x) & (y\leq 0).
	\end{cases}
\end{align*}
$E_\varepsilon$ is well-defined on $W^{2,1}(\Omega)$ for any $\varepsilon\geq0$ and especially $E_0$ is well-defined on $W^{1,1}(\Omega)$. Throughout this paper, we fix $\Omega$, $\psi$, $g$, $\Sigma$ and $\alpha$.

\begin{definition}
	Let $u\in X^{m,p}(\Omega)\cup C(\overline{\Omega})$. We set
	\begin{align*}
		\Omega_0^u&:=\{u=\psi\}=\{x\in\Omega \mid u(x)=\psi(x)\}\subset\Omega,\\
		\Omega_+^u&:=\{u>\psi\}=\{x\in\Omega \mid u(x)>\psi(x)\}\subset\Omega,\\
		\partial\Omega_+^u&:=\partial\{u>\psi\}\cap\Omega\subset\Omega.
	\end{align*}
	We call $\Omega_0^u$ coincidence set, $\Omega_+^u$ non-coincidence set and $\partial\Omega_+^u$ free boundary.
	Note that the non-coincidence set $\Omega_+^u$ is open in $\Omega$ and the coincidence set $\Omega_0^u$ and the free boundary $\partial\Omega_+^u$ are closed in $\Omega$.
	Moreover $\Omega=\Omega_0^u\sqcup\Omega_+^u$ holds.
\end{definition}

\section{Minimizers of $E_0$}\label{ezerosec}

In this section, we derive some properties of minimizers of $E_0$ in the one-dimensional case. 
In \S\ref{lscsec} we verify the lower semicontinuity of $E_0$ and in \S\ref{1-dsec} we derive properties about the shape of minimizers of $E_0$. 
These are useful to prove Theorem \ref{gammathm} which is our main singular limit result.

\subsection{Lower semicontinuity of $E_0$}\label{lscsec}

\begin{lemma}\label{lscgeneral}
	Let $\psi:\overline{\Omega}\rightarrow\mathbb{R}$ be a smooth function, $h:\Omega\times\mathbb{R}\times\mathbb{R}^n\rightarrow[0,\infty)$ be a Borel function and $E[u]:=\int_{\Omega}h(x,u-\psi,\nabla u)dx$. 
	If $h(x,\cdot,\cdot)$ is lower semicontinuous for $a.e.$ $x\in\Omega$, then $E:W^{1,1}(\Omega)\rightarrow[0,\infty]$ is lower semicontinuous.
\end{lemma}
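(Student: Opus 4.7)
The plan is to combine a subsequence extraction argument with Fatou's lemma; the hypothesis that $h$ is Borel and nonnegative is exactly what makes both steps work.

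First I would reduce to the nontrivial case $\liminf_{n} E[u_n]<\infty$ for a sequence $u_n\to u$ strongly in $W^{1,1}(\Omega)$. Select a subsequence $u_{n_k}$ realizing the liminf, i.e.\ $\lim_k E[u_{n_k}]=\liminf_n E[u_n]$. Because convergence in $W^{1,1}$ means $u_{n_k}\to u$ and $\nabla u_{n_k}\to\nabla u$ in $L^1(\Omega)$, a further (not relabelled) subsequence can be chosen so that $u_{n_k}(x)\to u(x)$ and $\nabla u_{n_k}(x)\to\nabla u(x)$ for a.e.\ $x\in\Omega$. Since $\psi$ is continuous, $u_{n_k}(x)-\psi(x)\to u(x)-\psi(x)$ a.e.\ as well.

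Next I would apply the pointwise lower semicontinuity hypothesis: for a.e.\ $x\in\Omega$ the function $(y,p)\mapsto h(x,y,p)$ is lsc, so evaluating at the convergent sequence $(u_{n_k}(x)-\psi(x),\nabla u_{n_k}(x))\to(u(x)-\psi(x),\nabla u(x))$ gives
$$h(x,u(x)-\psi(x),\nabla u(x))\ \leq\ \liminf_{k\to\infty} h(x,u_{n_k}(x)-\psi(x),\nabla u_{n_k}(x)) \quad\text{a.e.}$$
Because $h$ is a Borel function and the map $x\mapsto(x,u(x)-\psi(x),\nabla u(x))$ is Lebesgue measurable, both sides are measurable; because $h\geq 0$, Fatou's lemma applies and yields
$$E[u]=\int_\Omega h(x,u-\psi,\nabla u)\,dx \ \leq\ \liminf_{k\to\infty}\int_\Omega h(x,u_{n_k}-\psi,\nabla u_{n_k})\,dx \ =\ \liminf_{n\to\infty} E[u_n].$$

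There is no serious obstacle here; the only thing to be slightly careful about is the order of operations, namely first extracting a subsequence along which $E[u_{n_k}]$ attains the liminf and \emph{then} passing to an a.e.-convergent sub-subsequence, so that the final Fatou estimate is against the liminf of the original sequence rather than merely a particular subsequence. The nonnegativity of $h$ is essential to have Fatou without any domination hypothesis, which is why the lemma is phrased under $h\geq 0$.
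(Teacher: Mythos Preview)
Your proof is correct and follows essentially the same route as the paper: extract from the $W^{1,1}$-convergent sequence a subsequence that converges a.e.\ (together with its gradients), apply the pointwise lower semicontinuity of $h(x,\cdot,\cdot)$, and then invoke Fatou's lemma using $h\geq 0$. Your explicit remark about first passing to a subsequence realizing the liminf before extracting the a.e.-convergent sub-subsequence is exactly the care the paper's ``for any subsequence, there exists a subsequence'' formulation encodes.
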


\begin{proof}
	Fix any convergent sequence $u^{\varepsilon}\rightarrow u$ in $W^{1,1}$. 
	For any subsequence, there exist a subsequence such that
	$$u^{{\varepsilon}_j}\rightarrow u\quad\text{and}\quad \nabla u^{{\varepsilon}_j}\rightarrow \nabla u\quad a.e.\text{ in }\Omega.$$ 
	Then
	\begin{align*}
		\liminf_{j \to \infty}E[u^{{\varepsilon}_j}] &= \liminf_{j \to \infty}\int_\Omega h(x,u^{{\varepsilon}_j}-\psi,\nabla u^{{\varepsilon}_j})\ dx\\
		&\geq \int_\Omega\liminf_{j \to \infty} h(x,u^{{\varepsilon}_j}-\psi,\nabla u^{{\varepsilon}_j})\ dx\\
		&\geq \int_\Omega h(x,u-\psi,\nabla u)\ dx = E[u].
	\end{align*}
	The first inequality follows by Fatou's lemma, and the second one follows by the lower semicontinuity of $h$.
	Thus we get the consequence.
\end{proof}

\begin{proposition}\label{lscezero}
	$E_0: X^{1,1}(\Omega)\rightarrow[0,\infty)$ is lower semicontinuous.
\end{proposition}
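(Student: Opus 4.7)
The plan is to reduce the proposition to the general criterion of Lemma \ref{lscgeneral} by recognizing $E_0$ as an integral functional of the required form. Specifically, I would set
$$h(x,y,p) := \vartheta_\alpha(x,y)\sqrt{1+|p|^2},$$
so that $E_0[u] = \int_\Omega h(x, u-\psi, \nabla u)\,dx$. Two things then need to be checked: that $h$ is Borel measurable on $\Omega\times\mathbb{R}\times\mathbb{R}^n$, and that for (a.e.) $x\in\Omega$ the partial map $(y,p)\mapsto h(x,y,p)$ is lower semicontinuous. Borel measurability is automatic: $\vartheta_\alpha$ is built from the continuous function $\alpha$, the constant $1$, and the indicator $\mathbf{1}_{\{y>0\}}$, and it is multiplied by the continuous factor $\sqrt{1+|p|^2}$.

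The heart of the argument is the lower semicontinuity of $h(x,\cdot,\cdot)$. Fix $x\in\Omega$. The map $p\mapsto\sqrt{1+|p|^2}$ is continuous and strictly positive, so it suffices to show that $y\mapsto\vartheta_\alpha(x,y)$ is lower semicontinuous and then deduce joint lower semicontinuity from the non-negativity of both factors (using the elementary fact that if $a_n\geq 0$ with $\liminf a_n=a$ and $b_n\to b>0$, then $\liminf a_n b_n=ab$). The function $\vartheta_\alpha(x,\cdot)$ has a single discontinuity at $y=0$, where the value $\vartheta_\alpha(x,0)=\alpha(x)<1$ jumps \emph{upward} to $1$ as $y$ crosses zero from the left. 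Because the value at the jump point equals the smaller one-sided limit, $\vartheta_\alpha(x,\cdot)$ is lower semicontinuous on $\mathbb{R}$, and this is precisely what makes the functional well-behaved under $W^{1,1}$-convergence.

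With both hypotheses of Lemma \ref{lscgeneral} in place, the lemma yields lower semicontinuity of $E_0$ on $W^{1,1}(\Omega)$, and consequently on the closed subset $X^{1,1}(\Omega)$. Finiteness on $X^{1,1}(\Omega)$ (so that the codomain is $[0,\infty)$ and not $[0,\infty]$) is immediate from $\tilde{\alpha}[u]\leq 1$ together with $\int_\Omega\sqrt{1+|\nabla u|^2}\,dx\leq |\Omega|+\|\nabla u\|_{L^1(\Omega)}<\infty$. I do not expect a real obstacle in this argument; the only genuinely delicate point is the jump of $\vartheta_\alpha$ at $y=0$, and the sign of that jump (the adhesion energy is \emph{lower} on the coincidence set) is exactly what rescues lower semicontinuity — a downward jump would instead cost this property and would require a significantly more involved relaxation argument.
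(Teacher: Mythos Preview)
Your proposal is correct and follows exactly the paper's approach: apply Lemma~\ref{lscgeneral} with $h(x,y,\xi)=\vartheta_\alpha(x,y)\sqrt{1+|\xi|^2}$ and note that $\vartheta_\alpha(x,\cdot)$ is lower semicontinuous. The paper states this in a single line, while you have simply spelled out the verifications (Borel measurability, the upward jump at $y=0$, and finiteness) in more detail.
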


\begin{proof}
	By taking $h(x,y,\xi):=\vartheta_\alpha(x,y)\sqrt{1+|\xi|^2}$ in Lemma \ref{lscgeneral} and the lower semicontinuity of $\vartheta_\alpha(x,\cdot)$.
\end{proof}

\subsection{Properties in the one-dimensional case}\label{1-dsec}
Now we assume $n=1$ and $\Omega=(a,b)$, bounded open interval.
Note that $X^{m,p}(a,b)$ is contained in $C^{m-1}([a,b])$ in this case. 
To state the main theorem in this subsection, we introduce some terminologies.

\begin{definition}[Partitional regularity and contact angle]\quad
	\begin{enumerate}
		\item Let $N$ be a nonnegative integer. We say that $\{x_i\}_{i=0}^{N+1}\subset\mathbb{R}$ is a {\it partition} of an interval $(a,b)$ in $\mathbb{R}$ if $a=x_0<x_1<\cdots<x_{N+1}=b$.
		\item Let $u\in X^{1,1}(a,b)$.
		We say that $u$ is {\it partitional regular} if there exist a nonnegative integer $N$ and a partition $\{x_i\}_{i=0}^{N+1}$ of $(a,b)$ such that:
		\begin{enumerate}
			\item the interval $(x_0,x_1)$ is contained in either $(a,b)_0^{u}$ or $(a,b)_+^{u}$,
			\item if $(x_{i-1},x_i)\subset(a,b)_0^u$ then $(x_i,x_{i+1})\subset(a,b)_+^u$,
			\item if $(x_{i-1},x_i)\subset(a,b)_+^u$ then $(x_i,x_{i+1})\subset(a,b)_0^u$.
		\end{enumerate}
		Note that $\partial(a,b)_+^u=\{x_1,\ldots,x_N\}$ and $u\equiv\psi$ or $u>\psi$ in $(x_i,x_{i+1})$ alternately.
		We denote such regularity by {\it $[x_0;\ldots;x_{N+1}]$-regular} when we want to show the partition explicitly.
		\item Let $u$ be a function.
		If the derivative $u_x$ exists at $x$, we define the {\it tangent angle} of $u$ at $x$ by $\theta_u(x):=\arctan u_{x}(x)\in(-\pi/2,\pi/2)$.
		\item Let $u\in X^{1,1}(a,b)$ be $[x_0;\ldots;x_{N+1}]$-regular and $N$ be positive. 
		For $1\leq i\leq N$ with $(x_i,x_{i+1})\subset(a,b)_+^u$ (resp. $(a,b)_0^u$) we define the {\it contact angle $\theta_i\in[0,\pi)$} of $u$ with $\psi$ at $x_i$ by
		$$\theta_i:=\theta_u(x_i+)-\theta_\psi(x_i)\quad (\text{resp. }\theta_\psi(x_i)-\theta_u(x_i-))$$
		if $\theta_u(x_i\pm):=\lim_{x\to x_i\pm0}\theta_u(x)$ can be defined.
	\end{enumerate}
\end{definition}

Denote the set of minimizers of a functional $E:X^{m,p}(\Omega)\rightarrow[0,\infty]$ by $$\text{argmin}_{X^{m,p}}E:=\left\{u\in X^{m,p}(\Omega)\left|E[u]=\inf_{X^{m,p}(\Omega)}E\right\}\right..$$
The following is the main theorem in this subsection. 

\begin{theorem} \label{regezero}
	Let $\Omega=(a,b)$.
	If $\bar{u}\in{\rm argmin}_{X^{1,1}}E_0$ then there exists a partition $\{\bar{x}_i\}_{i=0}^{N+1}$ of $(a,b)$ such that $\bar{u}$ is $[\bar{x}_0;\ldots;\bar{x}_{N+1}]$-regular, the graph of $\bar{u}$ is a segment on any interval $(\bar{x}_i,\bar{x}_{i+1})\subset(a,b)_+^{\bar{u}}$ and the contact angle $\theta_i$ of $\bar{u}$ with $\psi$ at $\bar{x}_i$ satisfies $\cos\theta_i=\alpha(\bar{x}_i)$ for any $1\leq i\leq N$.
\end{theorem}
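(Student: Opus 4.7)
The strategy is to establish three structural properties of any minimizer $\bar u$ in turn: (a) $\bar u$ is affine on every connected component of the open non-coincidence set $(a,b)_+^{\bar u}$; (b) $(a,b)_+^{\bar u}$ has only finitely many connected components, yielding partitional regularity; and (c) at every interior free boundary point $\bar x_i$, a sliding-contact perturbation produces Young's equation $\cos\theta_i=\alpha(\bar x_i)$.

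For (a), fix a component $(c,d)\subset(a,b)_+^{\bar u}$ and a compact subinterval $[c',d']\subset(c,d)$. Since $\bar u-\psi$ is continuous and strictly positive there, $\bar u-\psi\geq\delta>0$ on $[c',d']$, so $\bar u+t\phi$ is admissible for every $\phi\in C_c^\infty((c',d'))$ and every sufficiently small $|t|$. On $(c,d)$ one has $\tilde\alpha[\bar u]\equiv 1$, so the first-variation equation for the length functional gives $\bar u_x/\sqrt{1+\bar u_x^2}$ constant on $(c',d')$; hence $\bar u_x$ is constant on $(c,d)$ and $\bar u$ is affine there.

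For (b), let $(c,d)$ be any component with $c,d\in(a,b)$, so that $\bar u(c)=\psi(c)$ and $\bar u(d)=\psi(d)$. Replacing $\bar u$ by $\psi$ on $(c,d)$ yields an admissible competitor, so the minimality of $\bar u$ together with (a) gives
\[(d-c)\sqrt{1+s^2}\ \leq\ \int_c^d\alpha(x)\sqrt{1+\psi_x(x)^2}\,dx,\qquad s:=\tfrac{\psi(d)-\psi(c)}{d-c}.\]
Dividing by $d-c$ and letting $d-c\to 0$ (so that $s\to\psi_x(c)$ by the mean value theorem and the right-hand side divided by $d-c$ tends to $\alpha(c)\sqrt{1+\psi_x(c)^2}$ by continuity) forces $1\leq\alpha(c)$, contradicting $\overline\alpha<1$. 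Hence there is a uniform $\eta_0>0$ with $d-c\geq\eta_0$ for every interior component, and since at most two components can touch $a$ or $b$, $(a,b)_+^{\bar u}$ has only finitely many components, yielding a partition $\{\bar x_i\}_{i=0}^{N+1}$ as required.

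For (c), fix an interior $\bar x_i$ with, say, coincidence on $(\bar x_{i-1},\bar x_i)$ and non-coincidence on $(\bar x_i,\bar x_{i+1})$, so by (a) the right slope $m:=\bar u_x(\bar x_i+)$ is constant. A calculation analogous to (b), in which one replaces an initial sliver of the non-coincidence piece by $\psi$, rules out the degenerate tangent case $m=\psi_x(\bar x_i)$, leaving $m>\psi_x(\bar x_i)$. For $|t|$ small define the competitor $u_t\in X^{1,1}$ to equal $\psi$ on $(\bar x_{i-1},\bar x_i+t)$ and to be the affine interpolant between $(\bar x_i+t,\psi(\bar x_i+t))$ and $(\bar x_{i+1},\bar u(\bar x_{i+1}))$ on $(\bar x_i+t,\bar x_{i+1})$; the strict inequality $m>\psi_x(\bar x_i)$ ensures $u_t\geq\psi$ for both signs of $t$. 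Imposing $\tfrac{d}{dt}E_0[u_t]\big|_{t=0}=0$ gives
\[\alpha(\bar x_i)\sqrt{1+\psi_x(\bar x_i)^2}=\frac{1+m\,\psi_x(\bar x_i)}{\sqrt{1+m^2}},\]
which, via $\sqrt{1+\tan^2\theta}=\sec\theta$ and $\cos(\theta_u-\theta_\psi)=\cos\theta_u\cos\theta_\psi+\sin\theta_u\sin\theta_\psi$, collapses to $\alpha(\bar x_i)=\cos(\theta_u-\theta_\psi)=\cos\theta_i$; the mirror orientation is identical. I expect the main obstacle to be step (b): the uniform length bound on non-coincidence components is the only place where the hypothesis $\overline\alpha<1$ is essential, and without it one cannot exclude accumulation of free boundary points. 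Step (c) is then essentially a classical free boundary calculation, modulo the admissibility check for the sliding competitor.
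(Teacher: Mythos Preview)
Your overall strategy matches the paper's: affine on non-coincidence components, finiteness of components, then the sliding-contact computation for Young's equation. Steps (a) and (c) are essentially the paper's arguments (your limiting argument in (b) for the lower length bound is a clean variant of the paper's Lemma~\ref{psidelta}/\ref{reglem1} approach). However, there is a genuine gap in step (b).

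You establish that $(a,b)_+^{\bar u}$ has only finitely many connected components, and then assert this ``yields a partition $\{\bar x_i\}_{i=0}^{N+1}$ as required.'' It does not: partitional regularity demands that consecutive partition intervals alternate between $(a,b)_0^{\bar u}$ and $(a,b)_+^{\bar u}$, and this fails if two adjacent non-coincidence components $(c_k,d_k)$ and $(c_{k+1},d_{k+1})$ satisfy $d_k=c_{k+1}$, i.e.\ if the coincidence set has an isolated point $p$. Nothing in your argument excludes this, and your step (c) presupposes a genuine coincidence \emph{interval} on one side of $\bar x_i$, so it does not apply at such a $p$.

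The paper devotes a separate paragraph to this case. At an isolated coincidence point $p$ one has $\bar u_{x+}(p)\geq\psi_x(p)\geq\bar u_{x-}(p)$ from the constraint. If the outer inequality is strict, replacing the two affine pieces near $p$ by the single chord gives an admissible competitor (the chord lies above the downward-pointing kink, hence above $\psi$) with strictly smaller length, contradicting minimality. If equality holds throughout, then $\bar u$ is tangent to $\psi$ at $p$ from both sides, and the sliver-replacement you already use in (c) (pushing a small piece of the right-hand affine segment down to $\psi$) strictly decreases $E_0$ because $\alpha(p)<1$. You have the second ingredient but never the first, and you apply neither to rule out isolated coincidence points. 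Once that paragraph is inserted, your proof is complete and essentially coincides with the paper's.
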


\begin{remark}\label{regrem}
	This theorem especially means that the regularity of any minimizer of $E_0$ in $X^{1,1}(a,b)$ is up to $X^{1,\infty}(a,b)$ (further, piecewise smooth) and this is optimal.
	Moreover, it is remarkable that the number of the connected components of the non-coincidence set of any minimizer is finite.
	This is not valid without the adhesion energy.
\end{remark}

To prove Theorem \ref{regezero}, we prepare several lemmas.

\begin{lemma}\label{psidelta}
	There exists $\delta_{\psi,\alpha}>0$ depending on $\psi$ and $\alpha$ such that for any interval $I\subset(a,b)$ whose width is not larger than $\delta_{\psi,\alpha}$ the inequality holds:
	$$\inf_{I}\sqrt{1+{\psi_x}^2}-\overline{\alpha}\sup_{I}\sqrt{1+{\psi_x}^2}\geq\frac{1-\overline{\alpha}}{2}.$$
\end{lemma}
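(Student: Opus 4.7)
The plan is to exploit uniform continuity of $\sqrt{1+\psi_x^2}$ on $[a,b]$ together with the trivial lower bound $\sqrt{1+\psi_x^2}\geq 1$. Since $\psi$ is smooth on the compact interval $[a,b]$, the function $x\mapsto\sqrt{1+\psi_x(x)^2}$ is continuous on a compact set, hence uniformly continuous. Therefore, for every $\eta>0$ there exists $\delta>0$ such that for every interval $I\subset(a,b)$ with $|I|\leq\delta$,
$$\sup_I\sqrt{1+\psi_x^2}-\inf_I\sqrt{1+\psi_x^2}\leq\eta.$$

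With this oscillation bound, I would estimate directly:
$$\inf_I\sqrt{1+\psi_x^2}-\overline{\alpha}\sup_I\sqrt{1+\psi_x^2}\geq\sup_I\sqrt{1+\psi_x^2}-\eta-\overline{\alpha}\sup_I\sqrt{1+\psi_x^2}=(1-\overline{\alpha})\sup_I\sqrt{1+\psi_x^2}-\eta.$$
Since $\sup_I\sqrt{1+\psi_x^2}\geq 1$ and $1-\overline{\alpha}>0$ (by the standing assumption $\overline{\alpha}<1$), this is at least $(1-\overline{\alpha})-\eta$. Choosing $\eta:=(1-\overline{\alpha})/2$ produces the desired inequality, and the corresponding $\delta$ from uniform continuity serves as $\delta_{\psi,\alpha}$.

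There is essentially no obstacle: the statement is a one-line uniform-continuity estimate, and the dependence of $\delta_{\psi,\alpha}$ on $\psi$ enters through the modulus of continuity of $\psi_x$, while the dependence on $\alpha$ enters only through $\overline{\alpha}<1$. The role of the lemma in the sequel is to provide a fixed length scale on which the tension dominates the adhesion gain by a definite margin, so the precise constant $(1-\overline{\alpha})/2$ is arbitrary; any positive fraction of $1-\overline{\alpha}$ would do by the same argument.
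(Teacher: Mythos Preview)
Your proof is correct and follows essentially the same approach as the paper: both use uniform continuity of $\sqrt{1+\psi_x^2}$ on $[a,b]$ to bound the oscillation $\sup_I\sqrt{1+\psi_x^2}-\inf_I\sqrt{1+\psi_x^2}$, combined with the trivial bound $\sqrt{1+\psi_x^2}\geq 1$. The only cosmetic difference is the algebra---the paper factors out $\overline{\alpha}$ and uses $\inf_I\sqrt{1+\psi_x^2}\geq 1$ directly, choosing the oscillation threshold $(1-\overline{\alpha})/(2\overline{\alpha})$, while you bound via $\sup_I\sqrt{1+\psi_x^2}\geq 1$ with threshold $(1-\overline{\alpha})/2$; either choice yields the stated inequality.
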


\begin{proof}
	For any interval $I$, we have
	\begin{align*}
		& \inf_{I}\sqrt{1+{\psi_x}^2}-\overline{\alpha}\sup_{I}\sqrt{1+{\psi_x}^2}\\
		&\geq \overline{\alpha}\left(\frac{1-\overline{\alpha}}{\overline{\alpha}}-\left(\sup_{I}\sqrt{1+{\psi_x}^2}-\inf_{I}\sqrt{1+{\psi_x}^2}\right)\right).
	\end{align*}
	Since $\psi_x$ is uniformly continuous, if the width of $I$ is sufficiently small then
	$$\sup_{I}\sqrt{1+{\psi_x}^2}-\inf_{I}\sqrt{1+{\psi_x}^2}\leq\frac{1-\overline{\alpha}}{2\overline{\alpha}},$$
	thus we get the consequence.
\end{proof}

\begin{lemma}\label{reglem1}
	Let $u\in X^{1,1}(a,b)$, $0<\delta\leq\delta_{\psi,\alpha}$ and $I\subset(a,b)$ be an interval whose width is $\delta$. Suppose that $u=\psi$ on $\partial I$ and $u>\psi$ at some interior point of $I$. Then the function
	\begin{align*}v:=
		\begin{cases}
			\psi & \text{in }I,\\
			u & \text{in }(a,b) \setminus I,
		\end{cases}
	\end{align*}
	belongs to $X^{1,1}(a,b)$ and satisfies $E_0[u]>E_0[v]$.
\end{lemma}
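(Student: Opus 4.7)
The plan is to first verify that $v$ is admissible, i.e., belongs to $X^{1,1}(a,b)$, and then to compute the energy difference explicitly and estimate it from below using Lemma \ref{psidelta}. For admissibility, since $X^{1,1}(a,b)\subset C([a,b])$ in the one-dimensional setting, $u$ is continuous and the hypothesis $u=\psi$ on $\partial I$ ensures that the piecewise definition of $v$ is continuous on $(a,b)$, with an $L^1$ weak derivative given by $\psi_x\chi_I+u_x\chi_{(a,b)\setminus I}$; hence $v\in W^{1,1}(a,b)$. The obstacle condition $v\geq\psi$ is immediate. The boundary condition $v=g$ on $\Sigma$ is preserved as well: either $\Sigma$ lies outside of $I$, in which case $v=u=g$ there, or an endpoint of $I$ meets $\Sigma$, and then at such a point both $u=g$ and $u=\psi$ hold, giving $v=\psi=g$.

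For the strict energy decrease, since $u=v$ on $(a,b)\setminus I$ it is enough to compare the integrals over $I$. Splitting $I=(I\cap\Omega_0^u)\sqcup(I\cap\Omega_+^u)$, I observe that on the coincidence part $u=\psi=v$ pointwise with $u_x=\psi_x$ a.e.\ and $\tilde{\alpha}[u]=\alpha=\tilde{\alpha}[v]$, so these contributions cancel. Consequently
$$E_0[u]-E_0[v]=\int_{I\cap\Omega_+^u}\sqrt{1+u_x^2}\,dx-\int_{I\cap\Omega_+^u}\alpha(x)\sqrt{1+\psi_x^2}\,dx.$$
Next I decompose the open set $I\cap\Omega_+^u$ into its connected components $\bigsqcup_k(p_k,q_k)$. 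By continuity of $u-\psi$, each endpoint $p_k,q_k$ lies either in $\partial I$ or in $\Omega_0^u$, so in both cases $u(p_k)=\psi(p_k)$ and $u(q_k)=\psi(q_k)$. Bounding the arclength of the graph of $u$ over $(p_k,q_k)$ below by the chord length joining $(p_k,\psi(p_k))$ to $(q_k,\psi(q_k))$, and applying the mean value theorem to the smooth function $\psi$, I obtain
$$\int_{p_k}^{q_k}\sqrt{1+u_x^2}\,dx\geq\sqrt{(q_k-p_k)^2+(\psi(q_k)-\psi(p_k))^2}\geq(q_k-p_k)\inf_I\sqrt{1+\psi_x^2},$$
while on the same subinterval
$$\int_{p_k}^{q_k}\alpha(x)\sqrt{1+\psi_x^2}\,dx\leq\overline{\alpha}(q_k-p_k)\sup_I\sqrt{1+\psi_x^2}.$$

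Summing over $k$ and invoking Lemma \ref{psidelta} (which applies because $|I|=\delta\leq\delta_{\psi,\alpha}$),
$$E_0[u]-E_0[v]\geq\bigl|I\cap\Omega_+^u\bigr|\left(\inf_I\sqrt{1+\psi_x^2}-\overline{\alpha}\sup_I\sqrt{1+\psi_x^2}\right)\geq\frac{1-\overline{\alpha}}{2}\,\bigl|I\cap\Omega_+^u\bigr|,$$
and strict positivity follows because $u>\psi$ at some interior point of $I$ together with the continuity of $u-\psi$ forces $|I\cap\Omega_+^u|>0$. The step I expect to be the main technical point is the chord-length/mean-value-theorem bound on each component $(p_k,q_k)$: the argument must produce a value of $\sqrt{1+\psi_x^2}$ at \emph{some} point of $I$ (to be absorbed into $\inf_I\sqrt{1+\psi_x^2}$) rather than a quantity depending on the specific geometry of $u$, so that Lemma \ref{psidelta} can be applied uniformly and summed over all components.
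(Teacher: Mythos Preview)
Your proof is correct and rests on the same key estimate as the paper's: on each connected component $(p_k,q_k)$ of $I\cap\Omega_+^u$, bound the arclength of the graph of $u$ below by the chord length, replace $u(p_k),u(q_k)$ by $\psi(p_k),\psi(q_k)$, apply the mean value theorem to $\psi$ to extract a factor $\inf_I\sqrt{1+\psi_x^2}$, and invoke Lemma~\ref{psidelta}.

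The organizational difference is in how the general case (possibly infinitely many components) is handled. The paper first proves the special case $u>\psi$ on all of $\mathrm{int}\,I$, then treats the general case by replacing $u$ by $\psi$ on the components $I_i$ one at a time and, when there are infinitely many, passing to the limit via the lower semicontinuity of $E_0$ (Proposition~\ref{lscezero}). Your direct summation over components is more economical: it bypasses the iteration and the appeal to lower semicontinuity, and it yields the explicit quantitative bound $E_0[u]-E_0[v]\geq\tfrac{1-\overline{\alpha}}{2}\,|I\cap\Omega_+^u|$, which the paper's limiting argument does not record. One small point worth making explicit is the standard fact you use tacitly, that for $u\in W^{1,1}$ one has $u_x=\psi_x$ a.e.\ on the level set $\{u=\psi\}$; this is what justifies the cancellation on $I\cap\Omega_0^u$.
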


\begin{proof}
	The function $v$ obviously belongs to $X^{1,1}(a,b)$ since $u=\psi$ on $\partial I$, thus we prove the inequality $E_0[u]>E_0[v]$.\par
	We first prove the case that $u>\psi$ at any interior point of $I$.
	Denote the endpoints of $I$ by $y_0$ and $y_1$.
	Then we have
	\begin{align*}
		&\ E_0[u]-E_0[v] \\
		=&\ \int_{y_0}^{y_1}\sqrt{1+{u_x}^2}\ dx-\int_{y_0}^{y_1}\alpha\sqrt{1+{\psi_x}^2}\ dx\\
		\geq&\ \sqrt{|y_1-y_0|^2+|u(y_1)-u(y_0)|^2}-\overline{\alpha}|y_1-y_0|\max_{[y_0,y_1]}\sqrt{1+{\psi_x}^2}\\
		=&\ \sqrt{|y_1-y_0|^2+|\psi(y_1)-\psi(y_0)|^2}-\overline{\alpha}|y_1-y_0|\max_{[y_0,y_1]}\sqrt{1+{\psi_x}^2}\\
		\geq&\ |y_1-y_0|\left(\min_{[y_0,y_1]}\sqrt{1+{\psi_x}^2}-\overline{\alpha}\max_{[y_0,y_1]}\sqrt{1+{\psi_x}^2}\right)\\
		\geq&\ \delta\cdot\frac{1-\overline{\alpha}}{2}.
	\end{align*}
	The last inequality follows by Lemma \ref{psidelta} and this implies the consequence.\par
	Next we prove the general case.
	Since $u$ is continuous, we can decompose $I\cap(a,b)_+^u(\neq\emptyset)$ into at most countable disjoint intervals $I_i$, where $1\leq i\leq N$ and $N$ is a positive integer or $\infty$.
	Note that $u>\psi$ in any $I_i$ and $u=\psi$ at any endpoint of $I_i$.
	Denote $v_0:=u$ and for all $1\leq i\leq N$ define $v_i\in X^{1,1}(a,b)$ by
	\begin{align*}
		v_i:=
		\begin{cases}
			\psi & \text{in }I_i,\\
			v_{i-1} & \text{in }(a,b)\setminus I_i.
		\end{cases}
	\end{align*}
	By the above paragraph, we have $E_0[v_{i-1}]>E_0[v_i]$ for any $1\leq i\leq N$.
	Therefore, if $N$ is finite then we have $v=v_N$ and $E_0[u]>E_0[v]$.
	If $N=\infty$ then we have $v_i\rightarrow v$ in $X^{1,1}(a,b)$ and, by Proposition \ref{lscezero},
	$$E_0[u]>\liminf_{i\to\infty}E[v_i]\geq E_0[v].$$
	The proof is completed.
\end{proof}

\begin{lemma}\label{minilem}
	If $\bar{u}\in{\rm argmin}_{X^{1,1}}E_0$ then $\bar{u}_{xx}\equiv0$ in $(a,b)_{+}^{\bar{u}}$, that is, the graph of $\bar{u}$ is a segment on any connected component of $(a,b)_+^{\bar{u}}$.
\end{lemma}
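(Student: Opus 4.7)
The plan is to argue by a local perturbation inside the non-coincidence set: if $\bar u$ were not affine on some connected component $(y_0,y_1)$ of $(a,b)_+^{\bar u}$, a chord replacement on a small subinterval would strictly decrease $E_0$ without leaving $X^{1,1}(a,b)$. The geometric content is that in the interior of $(a,b)_+^{\bar u}$ one has $\tilde{\alpha}[\bar u]\equiv 1$, so the energy locally reduces to the length functional, whose integrand $p\mapsto\sqrt{1+p^2}$ is strictly convex.

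Concretely, fix any $x_0\in(y_0,y_1)$. Since $\bar u-\psi$ is continuous and positive at $x_0$, I would choose $\eta>0$ and $r>0$ such that $[x_0-r,x_0+r]\subset(y_0,y_1)$ and $\bar u\geq \psi+2\eta$ there. Then, using uniform continuity of $\bar u$ and $\psi$ on this compact interval, shrink to $r'\in(0,r]$ so that the oscillations of both $\bar u$ and $\psi$ on any subinterval of $[x_0-r',x_0+r']$ are less than $\eta/2$. For every $[c,d]\subset[x_0-r',x_0+r']$ define the competitor
\[
v(x):=\bar u(c)+\frac{\bar u(d)-\bar u(c)}{d-c}(x-c)\qquad\text{on }[c,d],
\]
and $v:=\bar u$ on $(a,b)\setminus[c,d]$. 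The admissibility check goes as follows: $v(x)$ lies between $\bar u(c)$ and $\bar u(d)$, so $|v(x)-\bar u(c)|<\eta/2$, hence $v(x)\geq\bar u(c)-\eta/2\geq\psi(c)+3\eta/2\geq\psi(x)+\eta$, giving $v>\psi$ on $[c,d]$ and $v\in X^{1,1}(a,b)$ (the boundary trace is untouched since $[c,d]\subset(a,b)$).

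With this in hand, $\bar u$ and $v$ agree outside $[c,d]$ and both satisfy $\tilde\alpha\equiv 1$ on $[c,d]$, so
\[
E_0[\bar u]-E_0[v]=\int_c^d\sqrt{1+\bar u_x^2}\,dx-(d-c)\sqrt{1+\Bigl(\tfrac{1}{d-c}\textstyle\int_c^d \bar u_x\,dx\Bigr)^2}.
\]
By strict convexity of $p\mapsto\sqrt{1+p^2}$ and Jensen's inequality, the right-hand side is nonnegative, with equality if and only if $\bar u_x$ is a.e.\ constant on $[c,d]$. Minimality of $\bar u$ forces equality, so $\bar u$ is affine on every subinterval of $[x_0-r',x_0+r']$, hence on the whole neighborhood. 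Since this holds near every $x_0\in(y_0,y_1)$, the slope is locally constant and therefore constant on the connected interval $(y_0,y_1)$; thus $\bar u_{xx}\equiv 0$ on each component of $(a,b)_+^{\bar u}$.

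The main obstacle is the admissibility of the chord: one must guarantee $v\geq\psi$ on $[c,d]$ even though $\psi$ is not assumed concave or even monotone, which is why the two-stage localization (first secure a clearance $2\eta$ above $\psi$, then shrink to control oscillations) is needed. Once admissibility is in place, the strict-convexity/Jensen step and the passage from local-affine to global-affine are routine.
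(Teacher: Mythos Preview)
Your proof is correct. The paper's own proof is a one-line appeal to the standard fact that on the open set $(a,b)_+^{\bar u}$ the obstacle constraint is inactive, so $\bar u$ minimizes the bare length functional locally and is therefore a segment (a ``minimal surface'' in dimension one). Implicitly this means: compactly supported variations $\bar u+t\varphi$ remain admissible, the first variation vanishes, and the weak Euler--Lagrange equation forces $\bar u_x$ constant.

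Your route reaches the same conclusion but replaces the Euler--Lagrange machinery with a direct competitor argument: chord replacement plus Jensen's inequality for the strictly convex integrand $p\mapsto\sqrt{1+p^2}$. The extra work you do---the two-stage localization to guarantee the chord stays above $\psi$---is exactly the admissibility check that the paper sweeps into the word ``open''. What your approach buys is that it is fully self-contained and avoids any regularity bootstrap (you never need to know that a weak solution of the 1D minimal-surface equation is affine); what the paper's approach buys is brevity, since the underlying fact is classical. Both rest on the same core observation that $\tilde\alpha[\bar u]\equiv 1$ on the non-coincidence set, reducing $E_0$ locally to arclength.
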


\begin{proof}
	Since $\bar{u}$ is continuous, the set $(a,b)_{+}^{\bar{u}}$ is open, thus the graph of $\bar{u}$ is a minimal surface in $(a,b)_{+}^{\bar{u}}$.
\end{proof}

We now prove Theorem \ref{regezero}.

\begin{proof}[Theorem \ref{regezero}]
	Fix a minimizer $\bar{u}$.
	We first prove by contradiction that the numbers of the connected components of $(a,b)_+^{\bar{u}}$ and $(a,b)_0^{\bar{u}}$ are finite.
	If either of them is infinite, then so is the other, thus there exists a sufficiently small interval $I$ as in Lemma \ref{reglem1}.
	This contradicts the minimality of $\bar{u}$.\par
	Next we prove any connected component of $(a,b)_0^{\bar{u}}$ is not a point (but an interval) by contradiction.
	If there is a connected component $\{x^*\}\subset(a,b)_0^{\bar{u}}$ then there exists $r^*>0$ such that the intervals $[x^*-r^*,x^*)$ and $(x^*,x^*+r^*]$ are contained in $(a,b)_+^{\bar{u}}$.
	Note that $\bar{u}$ is straight on $[x^*-r^*,x^*)$ and $(x^*,x^*+r^*]$ by Lemma \ref{minilem}.
	Moreover, since $u\geq\psi$, the following inequalities hold:
	\begin{align}\label{regezerocond}
		\bar{u}_{x+}(x^*)\geq\psi_x(x^*)\geq\bar{u}_{x-}(x^*).
	\end{align}
	If $\bar{u}_{x+}(x^*)>\bar{u}_{x-}(x^*)$ holds then the function $v^{r^*}$ which is equal to $\bar{u}$ out of $[x^*-r^*,x^*+r^*]$ and a segment in $[x^*-r^*,x^*+r^*]$ belongs to $X^{1,1}(a,b)$ and $E_0[\bar{u}]>E_0[v^{r^*}]$ holds.
	This contradicts the minimality of $\bar{u}$.
	Thus we may assume that the equalities are attained in (\ref{regezerocond}).
	For $0<r<r^*$ we define
	\begin{align*}
		v^r:=
		\begin{cases}
			\bar{u} & \text{in }(a,x^*]\cup[x^*+r^*,b),\\
			\psi & \text{in } (x^*,x^*+r],\\
			\text{segment} & \text{in }(x^*+r,x^*+r^*).
		\end{cases}
	\end{align*}
	Then there exists a sequence $r'\downarrow0$ such that $v^{r'}\in X^{1,1}(a,b)$ and we see
	\begin{align*}
		E_0[v^{r'}]-E_0[\bar{u}]&=\int_{x^*}^{x^*+r'}\alpha\sqrt{1+\psi_x^2}\ dx\\
		&+\sqrt{|r^*-r'|^2+|\bar{u}(x^*+r^*)-\psi(x^*+r')|^2}\\
		&-\sqrt{|r^*|^2+|\bar{u}(x^*+r^*)-\bar{u}(x^*)|^2}.
	\end{align*}
	Noting that $\bar{u}(x^*)=\psi(x^*)$ by $x^*\in(a,b)_0^{\bar{u}}$, the equality assumption of (\ref{regezerocond}) implies $\bar{u}_{x+}(x^*)=\psi_x(x^*)$ and the relation $r^*\bar{u}_{x+}(x^*)=\bar{u}(x^*+r^*)-\bar{u}(x^*)$ follows since $\bar{u}$ is straight on $(x^*,x^*+r^*]$, we have
	$$\lim_{r'\downarrow0}\frac{E_0[v^{r'}]-E_0[\bar{u}]}{r'}=\alpha(x^*)\sqrt{1+\bar{u}_{x+}^2(x^*)}-\sqrt{1+\bar{u}_{x+}^2(x^*)}.$$
	By $\alpha(x^*)<1$, the above is negative.
	This also contradicts the minimality of $\bar{u}$.
	Therefore, we find that any connected component of $(a,b)_0^{\bar{u}}$ is an interval.\par
	Finally, we compute the contact angles of $\bar{u}$.
	From the above arguments, we know that $\bar{u}$ is $[\bar{x}_0,\dots,\bar{x}_{N+1}]$-regular for some partition.
	Assume $N>0$ and fix any $1\leq i\leq N$.
	We only consider the case $(\bar{x}_i,\bar{x}_{i+1})\subset(a,b)_+^{\bar{u}}$ (the case $(\bar{x}_i,\bar{x}_{i+1})\subset(a,b)_0^{\bar{u}}$ is similar).
	Since the partition is finite, there exists $r_i^*>0$ such that $[\bar{x}_i-r_i^*,\bar{x}_i]\subset(a,b)_0^{\bar{u}}$ and $(\bar{x}_i,\bar{x}_i+r_i^*]\subset(a,b)_+^{\bar{u}}$.
	Note that $\bar{u}$ is straight on $(\bar{x}_i,\bar{x}_i+r_i^*]$ and $\psi_x(\bar{x}_i)=\bar{u}_{x-}(\bar{x}_i)$.
	Notice that $\bar{u}_{x+}(\bar{x}_i)>\psi_x(\bar{x}_i)$ by considering as above paragraph.
	Thus we can perturb $\bar{u}$ near $\bar{x}_i$ for small $\pm r\in(0,r_i^*)$ as above.
	Denoting the perturbed $\bar{u}$ by $v^r$, we can similarly compute as
	$$\lim_{r\to\pm0}\frac{E_0[v^r]-E_0[\bar{u}]}{r}=\alpha(\bar{x}_i)\sqrt{1+\psi_x^2(\bar{x}_i)}-\frac{1+\psi_x(\bar{x}_i)\bar{u}_{x+}(\bar{x}_i)}{\sqrt{1+\bar{u}^2_{x+}(\bar{x}_i)}}.$$
	Since $\bar{u}$ is a minimizer, the above quantity has to be zero, thus we obtain
	\begin{align*}
		\alpha(\bar{x}_i)&=\frac{1+\psi_x(\bar{x}_i)\bar{u}_{x+}(\bar{x}_i)}{\sqrt{1+\psi_x^2(\bar{x}_i)}\sqrt{1+\bar{u}^2_{x+}(\bar{x}_i)}}\\
		&= \cos{\theta_u(\bar{x}_i+)}\cos{\theta_\psi(\bar{x}_i)}+\sin{\theta_u(\bar{x}_i+)}\sin{\theta_\psi(\bar{x}_i)}\\
		&= \cos{(\theta_u(\bar{x}_i+)-\theta_\psi(\bar{x}_i))}=\cos{\theta_i}.
	\end{align*}
	This completes the proof.
\end{proof}

\begin{remark}\label{bdryangle}
	If $\bar{u}(a)=\psi(a)$ and $(x_0,x_1)\subset(a,b)_+^{\bar{u}}$ ($x_0=a$), then we obtain $\theta_{\bar{u}}(a+)-\theta_{\psi}(a+)\geq\alpha(a)$ by a similar consideration.
	We also obtain $\theta_{\psi}(b-)-\theta_{\bar{u}}(b-)\geq\alpha(b)$.
\end{remark}

\begin{remark}\label{locminrem}
	Since we only used local perturbations in the above arguments, the results in this section are also valid for local minimizers $\bar{u}$ of $E_0$, that is, there exists $\varepsilon>0$ such that $E_0[u]\geq E_0[\bar{u}]$ for any $\|u-\bar{u}\|_{W^{1,1}}<\varepsilon$.
\end{remark}

\section{$\Gamma$-convergence}\label{gammaconvsec}

Now we rigorously state our main $\Gamma$-convergence result in the one-dimensional setting.
We set $\Omega=(a,b)$ and define $F_\varepsilon,F:X^{1,1}(a,b)\rightarrow[0,\infty]$ by
\begin{eqnarray}\label{feps1}
	&&F_\varepsilon[u] :=
	\begin{cases}
		\begin{array}{l}
			\varepsilon\displaystyle\int_{a}^{b} \kappa_u^2\sqrt{1+{u_x}^2}\ dx \\
			\qquad+\cfrac{1}{\varepsilon}\left(E_0[u]-\displaystyle\inf_{X^{1,1}}E_0\right)
		\end{array}
		& (u\in X^{2,1}(a,b)),\\
		\infty & (\text{otherwise}),
	\end{cases}
\end{eqnarray}
where $\kappa_u=\cfrac{u_{xx}}{(1+u_x^2)^{3/2}}$ is curvature, and
\begin{eqnarray}\label{fzero1}
	&&F[u]:=\begin{cases}
		\displaystyle\int_{\partial(a,b)_{+}^u} 4(\sqrt{2}-\sqrt{1+\alpha})\ d\mathcal{H}^0 & (u\in{\rm argmin}_{X^{1,1}}E_0),\\
		\infty & (\text{otherwise}).
	\end{cases}
\end{eqnarray}
We begin by recalling the definition of $\Gamma$-convergence.

\begin{definition}[$\Gamma$-convergence]
	\label{gammaconv}
	Let $X$ be a metric space and $F_\varepsilon,F:X\rightarrow[0,\infty]$.
	We say that $F_\varepsilon$ {\it $\Gamma(X)$-converges} to $F$ as $\varepsilon \downarrow 0$ if the following conditions hold:
	\begin{enumerate}
		\item For any convergent sequence $u_\varepsilon \rightarrow u$ ($\varepsilon \downarrow 0$) in $X$, 
		$$\liminf_{\varepsilon \to 0}F_\varepsilon[u_\varepsilon]\geq F[u].$$
		\item For any $u \in X$, there exists a convergent sequence $u_\varepsilon\rightarrow u$ ($\varepsilon \downarrow 0$) in $X$ such that 
		$$\limsup_{\varepsilon \to 0}F_\varepsilon[u_\varepsilon]\leq F[u].$$
	\end{enumerate} 
	We denote such convergence by $F_{\varepsilon}\xrightarrow{\Gamma}F$ on $X$.
\end{definition}

We are now in position to state our main result on $\Gamma$-convergence.

\begin{theorem} \label{gammathm}
	Let $F_\varepsilon$ and $F$ be the functionals on $X^{1,1}(a,b)$ defined by {\rm (\ref{feps1})} and {\rm (\ref{fzero1})}.
	Then $F_{\varepsilon}\xrightarrow{\Gamma}F$ on $X^{1,1}(a,b)$.
\end{theorem}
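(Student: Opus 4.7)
The plan is to establish the two halves of $\Gamma$-convergence separately, using the structural Theorem~\ref{regezero} for $E_0$-minimizers together with a Modica--Mortola type estimate adapted to the geometric setting.

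\emph{Liminf condition.} I will assume $u^\varepsilon\to u$ in $W^{1,1}$ and, setting $L:=\liminf_\varepsilon F_\varepsilon[u^\varepsilon]$, reduce to the case $L<\infty$, passing to a subsequence along which $F_\varepsilon[u^\varepsilon]\to L$ is bounded. Then eventually $u^\varepsilon\in X^{2,1}$ and $E_0[u^\varepsilon]-\inf E_0=O(\varepsilon)$, so Proposition~\ref{lscezero} forces $E_0[u]=\inf E_0$. Theorem~\ref{regezero} then gives the structure of $u$: finitely many free-boundary points $\bar{x}_1,\dots,\bar{x}_N$ with contact angles $\theta_i=\arccos\alpha(\bar{x}_i)$. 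The lower bound will be proven locally at each $\bar{x}_i$. On a small interval $I_i\ni\bar{x}_i$, rotate coordinates so $\psi_x(\bar{x}_i)=0$, parametrize the graph of $u^\varepsilon$ by arc length $s$, and let $\theta(s)$ be its tangent angle. A geometric computation---comparing the arc length of $u^\varepsilon|_{I_i}$ (where it lies above $\psi$) to the adhering-plus-straight energy of $u|_{I_i}$---identifies the excess energy density per arc length as $\Phi_i(\theta)=1-\cos(\theta_i-\theta)$ to leading order; note $\Phi_i(\theta_i)=0$. The pointwise AM--GM inequality $\varepsilon a^2+b^2/\varepsilon\ge 2|ab|$ with $a=\kappa=\theta'$ and $b=\sqrt{\Phi_i(\theta)}$ then yields
$$\varepsilon\int_{I_i}\kappa^2\,ds+\frac{1}{\varepsilon}\int_{I_i}\Phi_i(\theta)\,ds\ \ge\ 2\int_{I_i}|\theta'|\sqrt{\Phi_i(\theta)}\,ds\ \ge\ 2\int_{0}^{\theta_i}\sqrt{1-\cos(\theta_i-\theta)}\,d\theta\ =\ 4(\sqrt{2}-\sqrt{1+\alpha(\bar{x}_i)}),$$
where the last equality uses $1-\cos\phi=2\sin^2(\phi/2)$. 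Summing over $i$ and letting $\varepsilon\to 0$ gives $L\ge F[u]$.

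\emph{Limsup condition.} If $u$ is not a minimizer of $E_0$ then $F[u]=\infty$ and any sequence suffices. Otherwise I will construct the recovery sequence by grafting, in a neighborhood of each $\bar{x}_i$ of width tending to $0$ with $\varepsilon$, the optimal transition profile: the curve whose tangent angle $\theta^\varepsilon(s)$ solves the ODE $\varepsilon\theta'=\sqrt{1-\cos(\theta_i-\theta)}$ interpolating from $\theta=0$ to $\theta=\theta_i$. This profile realizes equality in the pointwise AM--GM step above, so its contribution to $F_\varepsilon$ converges to precisely $4(\sqrt{2}-\sqrt{1+\alpha(\bar{x}_i)})$. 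A careful matching---translating the profile slightly so $u^\varepsilon\ge\psi$ persists and smoothing the join to obtain $W^{2,1}$ regularity---produces an admissible sequence $u^\varepsilon\to u$ in $W^{1,1}$ with $F_\varepsilon[u^\varepsilon]\to F[u]$.

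\emph{Main obstacle.} The hard part will be the preparatory reduction preceding the local estimate. A general $W^{1,1}$-convergent sequence may have wildly oscillatory coincidence sets near the limit free boundary, and $E_0[u^\varepsilon]-\inf E_0$ need not concentrate near the $\bar{x}_i$ in any transparent manner. The introduction's ``$\delta$-associate'' surgery is designed precisely to replace $u^\varepsilon$ by a $W^{2,1}$ sequence that still converges to $u$, agrees with $u$ outside small neighborhoods of the $\bar{x}_i$, and does not asymptotically raise $F_\varepsilon$, so that $\kappa$, $\theta$ and arc length are all meaningful and the excess-density identity $\Phi_i(\theta)=1-\cos(\theta_i-\theta)$ becomes rigorous. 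Producing this reduction is where essentially all the real technical work sits.
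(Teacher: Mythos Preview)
Your strategy matches the paper's: reduce to $u\in\mathrm{argmin}_{X^{1,1}}E_0$ by lower semicontinuity, perform the $\delta$-associate surgery, apply a Modica--Mortola/AM--GM estimate on each non-coincidence arc, and for the limsup graft the equality profile. Your arc-length/tangent-angle formulation of the core inequality is in fact tidier than the paper's route through Lemmas~\ref{redlem5}--\ref{lblem1} (which first reflect the curve into a graph before doing the AM--GM in the $v_x$ variable), since the identity $\mathcal{L}[\gamma]-\mathcal{L}[\bar\gamma]=\int_0^L(1-\cos(\theta(s)-\bar\theta))\,ds$ holds directly for any $W^{2,1}$-curve with chord direction $\bar\theta$, and $\kappa=\theta'$ in arc length.

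There is, however, a genuine gap in your localisation. After the surgery, on a whole non-coincidence interval $(x_i,x_{i+1})$ of $u^\varepsilon$ the tangent angle at \emph{both} endpoints equals the obstacle angle (because $u^\varepsilon\in C^1$, $u^\varepsilon\ge\psi$, and $u^\varepsilon$ touches $\psi$ there). Your AM--GM step gives
\[
2\int|\theta'|\sqrt{1-\cos(\theta-\bar\theta)}\,ds\ \ge\ 2\bigl|G(\theta(\text{end}))-G(\theta(\text{start}))\bigr|,
\]
but with both endpoint angles equal to the obstacle angle this by itself yields almost nothing; the segment direction $\bar\theta$ (which becomes your $\theta_i$ only after rotation and in the limit $\delta\to 0$) is \emph{not} an endpoint value of $\theta$. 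The paper supplies the missing observation, hidden in Lemma~\ref{redlem5} as the parameter $t_\gamma$ with $\gamma_2'(t_\gamma)=0$: since $\gamma$ begins and ends on the chord $\bar\gamma$, Rolle's theorem furnishes an interior point where the tangent is parallel to $\bar\gamma$, i.e.\ $\theta=\bar\theta$. Splitting the integral there produces the two pieces $4(\sqrt{2}-\sqrt{1+\cos\theta_\gamma^0})$ and $4(\sqrt{2}-\sqrt{1+\cos\theta_\gamma^1})$, one for each of the two free-boundary points bounding that non-coincidence interval. Your ``small interval $I_i\ni\bar x_i$'' framing obscures this and, as written, gives no reason why $\theta$ should ever attain the value $\theta_i$ on $I_i$. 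Work instead on the full non-coincidence arc and insert the Rolle step; the rest of your outline, including the limsup construction, then goes through.
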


This theorem is proved in \S\ref{liminfsec} and \S\ref{limsupsec}.

It is one of the important properties of $\Gamma$-convergence that if $F_\varepsilon\xrightarrow{\Gamma}F$ and $u_\varepsilon$ is a minimizer of $F_\varepsilon$ then any cluster point of $\{u_\varepsilon\}$ is a minimizer of $F$.
In our setting, the minimizing problem of $F_\varepsilon$ and of $E_\varepsilon$ are equivalent, thus any cluster point of $\{\bar{u}_\varepsilon\}$ which is a sequence of minimizers of $E_\varepsilon$ is a minimizer of $F$ (as mentioned in Corollary \ref{limitcoro}).

\section{Lim-inf condition of $\Gamma$-convergence} \label{liminfsec}

In this section, we prove the liminf condition of Theorem \ref{gammathm}, which is the first condition of Definition \ref{gammaconv}.
The proof is separated into three parts: \S\ref{liminfsub1}--\ref{liminfsub3}.
In \S\ref{liminfsub1} we show that we only have to check special sequences.
In \S\ref{liminfsub2} we impose an essential and stronger restriction for sequences in order to obtain a lower estimate.
By this restriction, we are able to consider the energy $F_\varepsilon$ geometrically.
In \S\ref{liminfsub3} we obtain a lower bound for such restricted sequences.
A summary of the overall proof is given in \S\ref{liminfsub4}.

\subsection{Assumption on sequences}\label{liminfsub1}
We first give some simple assumptions for sequences.

\begin{assumption}\label{assumpseq} 
	Assume that a sequence
	$u^{\varepsilon}\rightarrow u$ in $X^{1,1}(a,b)$ satisfies
	\begin{itemize}\setlength{\leftskip}{3pt}
		\item[(1)] $\liminf_{{\varepsilon}\to 0}F_{\varepsilon}[u^{\varepsilon}]<\infty$,
		\item[(2)] $F_{\varepsilon}[u^{\varepsilon}]<\infty$ for all $\varepsilon>0$ (especially $\{u^{\varepsilon}\}\subset X^{2,1}(a,b)$),
		\item[(3)] $u=\bar{u}\in{\rm argmin}_{X^{1,1}}E_0$ (especially ${\rm argmin}_{X^{1,1}}E_0\neq\emptyset$),
		\item[(4)] $\bar{u}$ is $[\bar{x}_0;\ldots;\bar{x}_{N+1}]$-regular for some $N>0$.
	\end{itemize}
\end{assumption} 

\begin{proposition}\label{restseq}
	If the liminf condition of Theorem \ref{gammathm} is valid for sequences $u^{\varepsilon}\rightarrow u$ satisfying Assumption \ref{assumpseq}, then the liminf condition is fulfilled for any sequence.
\end{proposition}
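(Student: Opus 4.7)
The plan is to perform a case analysis on which item of Assumption \ref{assumpseq} might fail, showing that whenever any one of them is violated the liminf inequality $\liminf_{\varepsilon\to 0}F_\varepsilon[u^\varepsilon]\geq F[u]$ holds for trivial reasons, so that the hypothesis of the proposition already covers the remaining case.

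Given an arbitrary convergent sequence $u^\varepsilon\to u$ in $X^{1,1}(a,b)$, I would first extract a subsequence $\{\varepsilon_j\}$ realising $\liminf_{\varepsilon\to 0}F_\varepsilon[u^\varepsilon]$. If this value is $+\infty$ the inequality is automatic, so I may assume (1). After discarding finitely many indices we may further take $F_{\varepsilon_j}[u^{\varepsilon_j}]<\infty$ for every $j$, which forces $u^{\varepsilon_j}\in X^{2,1}(a,b)$ and hence (2) along the subsequence; since $\liminf$ is preserved under this reduction, it suffices to argue with this subsequence.

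The key step is to upgrade (1)--(2) to (3). From definition (\ref{feps1}) and non-negativity of the bending term,
$$\varepsilon F_\varepsilon[v] \;\geq\; E_0[v]-\inf_{X^{1,1}}E_0 \;\geq\; 0 \qquad (v\in X^{2,1}(a,b)).$$
Combined with the finite liminf this forces $E_0[u^{\varepsilon_j}]\to\inf_{X^{1,1}}E_0$, and the lower semicontinuity established in Proposition \ref{lscezero} then yields $E_0[u]\leq\inf_{X^{1,1}}E_0$, so $u=\bar u$ is a minimizer of $E_0$ and (3) holds. Conversely, if $u$ is not a minimizer (which also covers the case of empty argmin, since otherwise $u$ would be a minimizer by the same reasoning), then $F[u]=\infty$ by (\ref{fzero1}), while the strict inequality $E_0[u]>\inf_{X^{1,1}}E_0$ combined with the same estimate and Proposition \ref{lscezero} gives $\liminf F_\varepsilon[u^\varepsilon]=\infty$, making the target inequality vacuous.

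Finally I would invoke Theorem \ref{regezero} to endow $\bar u$ with a partition making it $[\bar x_0;\ldots;\bar x_{N+1}]$-regular. When $N=0$ the free boundary $\partial(a,b)_+^{\bar u}$ is empty, whence $F[\bar u]=0$, and the nonnegativity $F_\varepsilon\geq 0$ closes the case at once. When $N>0$ the subsequence $\{u^{\varepsilon_j}\}$ now satisfies all four items of Assumption \ref{assumpseq}, and the hypothesis of the proposition delivers the desired bound. I do not anticipate any substantive obstacle: the entire argument is bookkeeping resting on the lower semicontinuity Proposition \ref{lscezero}, and the genuine analytic content is deferred to \S\ref{liminfsub2}--\S\ref{liminfsub3}, where the liminf estimate is actually established under the simplifying assumptions.
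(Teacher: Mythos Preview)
Your proposal is correct and follows essentially the same approach as the paper: a case-by-case reduction through items (1)--(4), using Proposition~\ref{lscezero} to force $u\in\mathrm{argmin}_{X^{1,1}}E_0$ whenever the liminf is finite, and invoking Theorem~\ref{regezero} together with $F_\varepsilon\geq 0$ to dispose of the $N=0$ case. The paper phrases step (3) as the contrapositive (if $E_0[u]>\inf E_0$ then the liminf is infinite), while you argue the direct implication via $\varepsilon F_\varepsilon\geq E_0-\inf E_0$, but the content is identical.
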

\begin{proof}
	(1) If $\liminf_{{\varepsilon}\to 0}F_{\varepsilon}[u^{\varepsilon}]=\infty$ then the liminf condition is trivial.\\
	(2) If there exists $\varepsilon_0>0$ such that $F_{\varepsilon}[u^{\varepsilon}]=\infty$ for any $0<\varepsilon<\varepsilon_0$, then $\liminf_{{\varepsilon}\to 0}F_{\varepsilon}[u^{\varepsilon}]=\infty$, thus it is trivial.
	If not, then there exists a subsequence $\{u^{\varepsilon'}\}$ such that $F_{\varepsilon'}[u^{\varepsilon'}]<\infty$ for all $\varepsilon'$ and
	$$\liminf_{{\varepsilon'}\to 0}F_{\varepsilon'}[u^{\varepsilon'}]\leq\liminf_{{\varepsilon}\to 0}F_{\varepsilon}[u^{\varepsilon}],$$
	thus we can assume (2) without loss of generality.\\
	(3) If $u^{\varepsilon}\rightarrow u$ and $E_0[u]>\inf_{X^{1,1}}E_0$, then $\liminf_{{\varepsilon}\to 0}F_{\varepsilon}[u^{\varepsilon}]=\infty$ by Proposition \ref{lscezero}, thus it is trivial.\\
	(4) If $\bar{u}\in X^{1,1}(a,b)$ is $[x_0;\ldots;x_{N+1}]$-regular then
	\begin{align}\label{repsinglim}
		\int_{\partial(a,b)_{+}^u} 4(\sqrt{2}-\sqrt{1+\alpha})\ d\mathcal{H}^0 = \sum_{i=1}^N 4\left(\sqrt{2}-\sqrt{1+\alpha(x_i)}\right),
	\end{align}
	thus it is trivial in the case $N=0$.
	Here the sum is defined to be zero when $N=0$.
\end{proof}

\subsection{$\delta$-associate}\label{liminfsub2}

In this subsection, we reduce sequences in the liminf condition of Theorem \ref{gammathm} while not increasing the limit inferior.
This reduction is important because it becomes easy to obtain a lower estimate for such reduced sequences.
To state the reduction result, we introduce a notation.

\begin{definition}[$\delta$-associate]\label{deltaasso}
	Let $\delta>0$ and $u\in X^{1,1}(a,b)$.
	We say that $v\in X^{1,1}(a,b)$ is {\it $\delta$-associated} with $u$ if $\|u-v\|_\infty\leq\delta$ and there exists a continuous strictly increasing operator $S:[a,b]\rightarrow[a,b]$ such that $|Sx-x|\leq\delta$ for all $x\in[a,b]$ and $S[(a,b)_+^u]=(a,b)_+^v$.
\end{definition}

\begin{figure}[htbp]
	\begin{center}
		\def\svgwidth{60mm}
		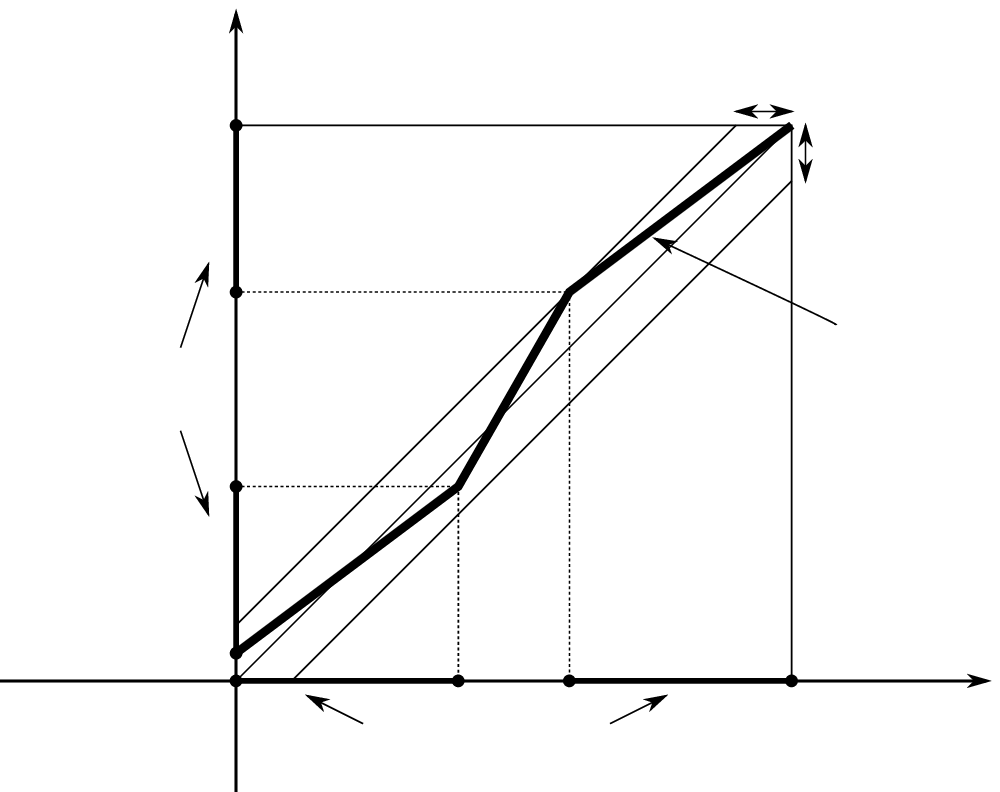
		\label{deltaS}
	\end{center}
	\begin{tabular}{cc}
		\begin{minipage}{0.5\hsize}
			\begin{center}
				\def\svgwidth{50mm}
				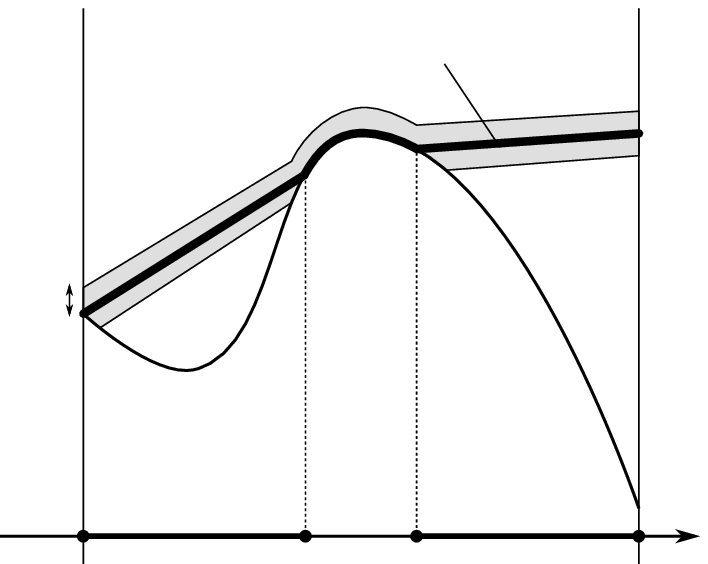
			\end{center}
		\end{minipage}
		\begin{minipage}{0.5\hsize}
			\begin{center}
				\def\svgwidth{50mm}
				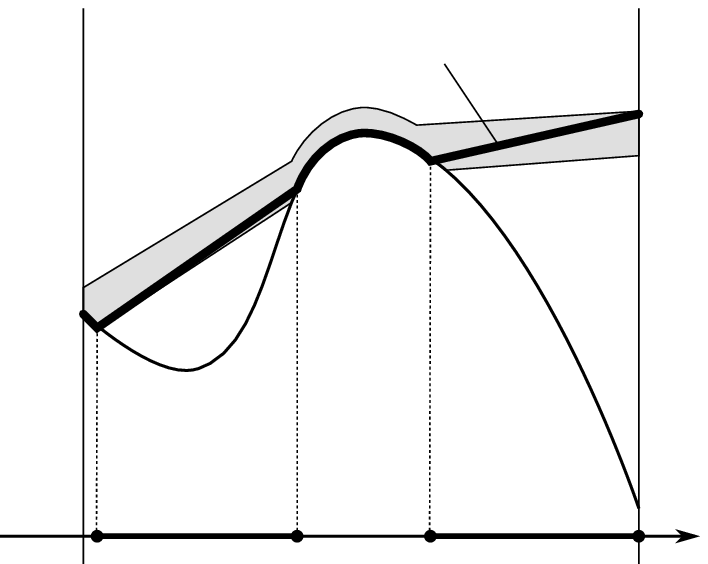
			\end{center}
		\end{minipage}
	\end{tabular}
	\caption{$\delta$-associate}
	\label{deltauv}
\end{figure}

\begin{remark}
	For any partitional regular function $u$ and function $v$ which is $\delta$-associated with $u$, the numbers of the connected components of $(a,b)_+^u$ and $(a,b)_+^v$ are equal and $v$ is partitional regular.
\end{remark}

The following proposition is the main reduction result of this subsection.

\begin{proposition}\label{lbprop1}
	Let $u^{\varepsilon}\rightarrow\bar{u}$ in $X^{1,1}(a,b)$ as in Assumption \ref{assumpseq}.
	Then there exists $\bar{\delta}:=\bar{\delta}(\psi,\alpha,\bar{u})>0$ satisfying the following: for any $0<\delta\leq\bar{\delta}$ there exists a sequence $\{w^{{\varepsilon}_j}_\delta\}_j\subset X^{2,1}(a,b)$ such that,
	\begin{enumerate}
		\item $\liminf_{j\to\infty}F_{{\varepsilon}_j}[w^{{\varepsilon}_j}_\delta]\leq\liminf_{{\varepsilon}\to 0}F_{\varepsilon}[u^{\varepsilon}]$,
		\item for any $j$ the function $w^{{\varepsilon}_j}_\delta$ is $\delta$-associated with $\bar{u}$.
	\end{enumerate}
\end{proposition}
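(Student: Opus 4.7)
The plan is to modify the given sequence $u^{\varepsilon}$ into a $\delta$-associate $w^{\varepsilon_j}_\delta\in X^{2,1}(a,b)$ of $\bar u$ by performing local surgery near each free boundary point $\bar x_i$, while retaining $u^{\varepsilon_j}$ on the bulk of each non-coincidence interval of $\bar u$. Two facts are essential inputs: first, since $W^{1,1}(a,b)\hookrightarrow C([a,b])$, the convergence $u^{\varepsilon}\to\bar u$ is automatically uniform; second, by Theorem \ref{regezero} the limit $\bar u$ is piecewise linear with strict positive slope-gap from $\psi$ at each $\bar x_i$, so $\bar u-\psi$ is bounded below by a positive constant on the $\bar\delta$-interior of every non-coincidence interval.

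Step 1 (Parameters). Choose $\bar\delta>0$ so small that (i) the intervals $[\bar x_i-\bar\delta,\bar x_i+\bar\delta]$ are pairwise disjoint and compactly contained in $(a,b)$, (ii) the smallness condition of Lemma \ref{psidelta} holds on every interval of width $\leq\bar\delta$, and (iii) $\bar u-\psi\geq c_0\bar\delta$ on the $\bar\delta$-interior of each non-coincidence interval of $\bar u$. Fix $\delta\in(0,\bar\delta]$ and extract a subsequence $\{\varepsilon_j\}$ realising $\liminf_{\varepsilon\to0}F_\varepsilon[u^\varepsilon]$ along which $\|u^{\varepsilon_j}-\bar u\|_\infty\to0$ and $\|u^{\varepsilon_j}_x-\bar u_x\|_{L^1}\to0$.

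Step 2 (Construction). For each $\bar x_i$ pick a shifted free boundary point $p_i^{\varepsilon_j}$ within $\delta/2$ of $\bar x_i$; let $S:[a,b]\to[a,b]$ be the piecewise-linear homeomorphism fixing the endpoints and sending $\bar x_i\mapsto p_i^{\varepsilon_j}$, which will realise the $\delta$-associate relation. Define $w^{\varepsilon_j}_\delta$ piecewise: on the $S$-image of each coincidence interval of $\bar u$, set $w:=\psi$; on the central bulk (distance at least a transition scale $h_j$ from the endpoints) of the $S$-image of each non-coincidence interval, set $w:=u^{\varepsilon_j}$ (which is $>\psi$ there for $\varepsilon_j$ small, by Step 1(iii) and uniform convergence); and in each transition window of width $h_j$ adjoining a $p_i^{\varepsilon_j}$, interpolate by a cubic Hermite polynomial $C^1$-matched to $(\psi,\psi_x)$ at $p_i^{\varepsilon_j}$ and to $(u^{\varepsilon_j},u^{\varepsilon_j}_x)$ at the bulk endpoint. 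For $\varepsilon_j$ small the interpolant stays strictly above $\psi$, so the non-coincidence set of $w^{\varepsilon_j}_\delta$ equals $S[(a,b)_+^{\bar u}]$; $C^1$-matching at every junction places $w^{\varepsilon_j}_\delta$ in $X^{2,1}(a,b)$.

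Step 3 (Energy comparison, the main obstacle). Decompose $F_{\varepsilon_j}[w^{\varepsilon_j}_\delta]$ into bending and $\varepsilon_j^{-1}(E_0[w^{\varepsilon_j}_\delta]-\inf E_0)$ parts. The bending is easy: the $\{w=\psi\}$ part contributes $O(\varepsilon_j)$ from the smoothness of $\psi$, the bulk part is bounded by $\varepsilon_j\int\kappa_{u^{\varepsilon_j}}^2\sqrt{1+(u^{\varepsilon_j}_x)^2}dx$, and each transition window contributes $O(\varepsilon_j/h_j)$, which tends to zero provided $h_j/\varepsilon_j\to\infty$. The principal technical difficulty is to prove $E_0[w^{\varepsilon_j}_\delta]-E_0[u^{\varepsilon_j}]=o(\varepsilon_j)$, so that the $E_0$-contribution of $w^{\varepsilon_j}_\delta$ is asymptotically controlled by that of $u^{\varepsilon_j}$. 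The difference localises to two regions: (a) the parts of $\{u^{\varepsilon_j}>\psi\}$ lying over the $\bar u$-coincidence, where $w^{\varepsilon_j}_\delta=\psi$ restores the $\alpha$ adhesion factor and by a variant of Lemma \ref{reglem1} the replacement strictly decreases $E_0$; and (b) the narrow transition windows, where the cubic interpolant a priori produces a positive length excess of order $h_j$. The heart of the argument is to make the negative contribution from (a) absorb the positive contribution from (b) up to $o(\varepsilon_j)$ by a joint and $\varepsilon_j$-dependent choice of the points $p_i^{\varepsilon_j}$, the window width $h_j$, and the shape of the interpolant; this exploits crucially the minimality of $\bar u$ (through Young's identity $\cos\theta_i=\alpha(\bar x_i)$ from Theorem \ref{regezero}) and the $L^1$-convergence of the derivatives. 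This balancing is the delicate point where the structural results of Section \ref{ezerosec} enter in an essential way, and I expect it to be the main technical hurdle of the proof.
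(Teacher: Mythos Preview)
Your proposal has a genuine gap in Step~3, and the gap stems from missing the simple observation that makes the paper's argument work without any interpolation.

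The paper's proof (via Lemmas~\ref{redlem1}--\ref{redlem3}) exploits the following: since each $u^{\varepsilon_j}\in X^{2,1}(a,b)\subset C^1([a,b])$ and $u^{\varepsilon_j}\geq\psi$, at any interior point $y$ with $u^{\varepsilon_j}(y)=\psi(y)$ one automatically has $(u^{\varepsilon_j})_x(y)=\psi_x(y)$. The paper first shows (Lemma~\ref{redlem1}) that such touching points exist in every short subinterval of $(a,b)_0^{\bar u}$ --- otherwise the $E_0$-excess would be bounded below by a positive constant, contradicting $\liminf F_\varepsilon<\infty$. Then (Lemmas~\ref{redlem2} and~\ref{redlem3}) one takes the new free boundary points $p_i^{\varepsilon_j}$ to be these touching points and simply sets $w:=\psi$ on the coincidence side and $w:=u^{\varepsilon_j}$ on the non-coincidence side. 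The $C^1$-match at $p_i^{\varepsilon_j}$ is automatic, so $w\in X^{2,1}$ with no transition layer at all. The only energy changes are a non-increase in $E_0$ (Lemma~\ref{reglem1}) and an $O(\varepsilon_j)$ bending contribution from the newly inserted smooth $\psi$-piece, which vanishes in the limit.

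Your construction does not use this $C^1$-tangency. You pick $p_i^{\varepsilon_j}$ arbitrarily and insert cubic Hermite transition layers of width $h_j$, which creates two competing constraints: the bending cost $O(\varepsilon_j/h_j)$ forces $h_j/\varepsilon_j\to\infty$, while the length excess divided by $\varepsilon_j$ forces $h_j/\varepsilon_j\to 0$. You propose to resolve this by an unquantified ``balancing'' against the negative contribution~(a), invoking Young's relation $\cos\theta_i=\alpha(\bar x_i)$. But the contact-angle identity plays no role at this stage of the argument (it enters only later, in Proposition~\ref{lbprop2}), and there is no reason the negative contribution~(a) should have the right order: if $u^{\varepsilon_j}$ already touches $\psi$ near each $\bar x_i$ --- which it must, by the Lemma~\ref{redlem1} argument --- then~(a) can be arbitrarily small. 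Moreover, you have only $L^1$-control on $(u^{\varepsilon_j})_x$, so the pointwise value $(u^{\varepsilon_j})_x$ at the bulk endpoint of the window is uncontrolled, and the cubic interpolant's length and curvature need not obey your stated orders. In short, the ``main technical hurdle'' you anticipate is an artifact of not choosing $p_i^{\varepsilon_j}$ at touching points; once you do, the hurdle disappears and the proof reduces to the paper's three lemmas.
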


\begin{proof}
	This is a direct consequence of the following three lemmas: Lemma \ref{redlem1}, Lemma \ref{redlem2} and Lemma \ref{redlem3}.
\end{proof}

\begin{lemma}\label{redlem1}
	Let $u^{\varepsilon}\rightarrow\bar{u}$ in $X^{1,1}(a,b)$ as in Assumption \ref{assumpseq} and $\delta_{\psi,\alpha}>0$ in Lemma \ref{psidelta}.
	Then for any $0<\delta\leq3\delta_{\psi,\alpha}$ there exists a subsequence $\{u_\delta^{{\varepsilon}_j}\}_j\subset\{u^{\varepsilon}\}_\varepsilon$ such that,
	\begin{enumerate}
		\item $\liminf_{j\to\infty}F_{{\varepsilon}_j}[u_\delta^{{\varepsilon}_j}]=\liminf_{{\varepsilon}\to 0}F_{\varepsilon}[u^{\varepsilon}]$,
		\item for any ${\varepsilon}_j>0$ and interval $I\subset(a,b)_0^{\bar{u}}$ with width $\delta$ there exists $y\in I$ such that $u_\delta^{{\varepsilon}_j}(y)=\psi(y)$.
	\end{enumerate}
\end{lemma}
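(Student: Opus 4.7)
The plan is to take the subsequence $\{u_\delta^{\varepsilon_j}\}_j$ to be a tail of a $\liminf$-achieving subsequence of $\{u^\varepsilon\}$ and to verify that condition (2) is automatic along that tail. First I would extract $\{u^{\varepsilon_j}\}$ with $F_{\varepsilon_j}[u^{\varepsilon_j}]\to L:=\liminf_\varepsilon F_\varepsilon[u^\varepsilon]<\infty$; since both summands defining $F_\varepsilon$ are nonnegative, this uniform bound forces $E_0[u^{\varepsilon_j}]-\inf_{X^{1,1}}E_0 = O(\varepsilon_j)$.

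The heart of the argument is a quantitative lower bound for the energy excess whenever (2) fails. I would show that if $u^\varepsilon>\psi$ on some interval $I\subset(a,b)_0^{\bar u}$ of width $\delta$, then
\[
E_0[u^\varepsilon]-\inf_{X^{1,1}}E_0 \geq (1-\overline{\alpha})\delta + o(1)\quad\text{as }\varepsilon\to 0.
\]
To obtain this I would split $(a,b)$ into the four Borel pieces determined by the dichotomies $\{u^\varepsilon>\psi\}$ vs.\ $\{u^\varepsilon=\psi\}$ and $(a,b)_+^{\bar u}$ vs.\ $(a,b)_0^{\bar u}$. Using $\bar u_x=\psi_x$ a.e.\ on $(a,b)_0^{\bar u}$ (by partitional regularity, Theorem~\ref{regezero}) and $u^\varepsilon_x=\psi_x$ a.e.\ on $\{u^\varepsilon=\psi\}$, the piece where both functions coincide with $\psi$ contributes nothing. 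The dominant piece comes from $A_\varepsilon:=\{u^\varepsilon>\psi\}\cap(a,b)_0^{\bar u}\supset I$: after swapping $\sqrt{1+(u^\varepsilon_x)^2}$ for $\sqrt{1+\psi_x^2}$ via the $1$-Lipschitz bound $|\sqrt{1+t^2}-\sqrt{1+s^2}|\leq|t-s|$ and $W^{1,1}$-convergence, this piece contributes $\int_{A_\varepsilon}(1-\alpha)\sqrt{1+\psi_x^2}\,dx+o(1)\geq (1-\overline{\alpha})\delta+o(1)$.

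The remaining two pieces must be shown to be $o(1)$. The piece $\{u^\varepsilon>\psi\}\cap(a,b)_+^{\bar u}$ is handled by the same Lipschitz estimate and $W^{1,1}$-convergence. The piece $B_\varepsilon:=\{u^\varepsilon=\psi\}\cap(a,b)_+^{\bar u}$ is more delicate: I would invoke the Sobolev embedding $W^{1,1}(a,b)\hookrightarrow C([a,b])$ to obtain uniform convergence $u^\varepsilon\to\bar u$, combine with the strict positivity of $\bar u-\psi$ on compact subsets of the open set $(a,b)_+^{\bar u}$ to conclude $|B_\varepsilon|\to 0$, and then exploit the boundedness of $\bar u_x$ and $\psi_x$ (from partitional regularity and smoothness of $\psi$) to make both integrands vanish in the limit. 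This error-term bookkeeping, in particular the measure-vanishing of $B_\varepsilon$, is the main technical obstacle I anticipate.

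Granted the energy estimate, the lemma follows by contradiction: if (2) failed for infinitely many $j$, a further extraction would give $F_{\varepsilon_{j_k}}[u^{\varepsilon_{j_k}}]\geq (1-\overline{\alpha})\delta/(2\varepsilon_{j_k})\to\infty$, contradicting the uniform bound along the $\liminf$-achieving subsequence. Hence (2) holds for all large $j$; discarding the initial indices produces the desired subsequence, whose $\liminf$ is still $L$, as required.
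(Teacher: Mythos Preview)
Your argument is correct but follows a genuinely different route from the paper. The paper first fixes the offending interval by covering $(a,b)_0^{\bar u}$ with finitely many intervals of width $\delta/3$ and, for each one, extracts a subsequence touching $\psi$ there; the positivity of the energy excess is then obtained from the abstract lower semicontinuity of $E_0$ (Lemma~\ref{lscgeneral}) together with Lemma~\ref{psidelta}, which is why the hypothesis $\delta\le 3\delta_{\psi,\alpha}$ appears. You instead prove a lower bound on $E_0[u^\varepsilon]-E_0[\bar u]$ that is \emph{uniform} in the location of $I$, via the four-set decomposition and the $1$-Lipschitz estimate for $t\mapsto\sqrt{1+t^2}$, so no covering or diagonal extraction is needed and Lemma~\ref{psidelta} is never invoked. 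The price is the extra bookkeeping on $B_\varepsilon=\{u^\varepsilon=\psi\}\cap(a,b)_+^{\bar u}$, where you need both $|B_\varepsilon|\to 0$ (from uniform convergence and openness of $(a,b)_+^{\bar u}$) and boundedness of $\bar u_x$ (available from Theorem~\ref{regezero}/Remark~\ref{regrem}); the paper's use of lower semicontinuity sidesteps this entirely. Your approach is more hands-on and actually yields the lemma without the restriction $\delta\le 3\delta_{\psi,\alpha}$, while the paper's is shorter once the lsc machinery is in place.
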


\begin{proof}
	Fix any $0<\delta\leq3\delta_{\psi,\alpha}$.
	By definition of $\liminf$, we can take a subsequence $\{u^{\varepsilon'}\}\subset\{u^{\varepsilon}\}$ such that 
	$$\lim_{\varepsilon'\to0}F_{{\varepsilon}'}[u^{{\varepsilon}'}]=\liminf_{{\varepsilon}\to 0}F_{\varepsilon}[u^{\varepsilon}]<\infty.$$
	We prove that this sequence has a subsequence satisfying the second condition of Lemma \ref{redlem1} by contradiction.
	Note that it suffices to prove that, for any $I\subset(a,b)_0^{\bar{u}}$ whose width is $\delta/3$, there exists a subsequence (depending on $I$) touching $\psi$ somewhere in $I$.
	Indeed, since $(a,b)_0^{\bar{u}}$ is covered by a finite number of intervals in $(a,b)_0^{\bar{u}}$ with width $\delta/3$, if we proceed to take such subsequences repeatedly for each interval of the covering, then any function of the eventual subsequence touches $\psi$ in each interval of the covering.
	The consequence follows since any interval in $(a,b)_0^{\bar{u}}$ with width $\delta$ contains some interval of the covering. \par
	Thus we suppose that for a given $I$ there is no such subsequence.
	Namely, suppose that there would exist $\varepsilon'>0$ and some interval $I\subset(a,b)_0^{\bar{u}}$ with width $\delta/3$ such that $u^{\varepsilon''}>\psi$ in $I$ for any $0<\varepsilon''<\varepsilon'$. 
	Then we would obtain
	\begin{align*}
		\varepsilon''F_{\varepsilon''}[u^{\varepsilon''}]&\geq E_0[u^{\varepsilon''}]-E_0[\bar{u}]\\
		&\geq \int_{I} \sqrt{1+(u_{x}^{\varepsilon''})^2}\ dx-\int_{I}\alpha\sqrt{1+\psi_{x}^2}\ dx\\
		& +\int_{(a,b)\setminus I} \tilde{\alpha}[u^{\varepsilon''}] \sqrt{1+(u_x^{\varepsilon''})^2}\ dx - \int_{(a,b)\setminus I} \tilde{\alpha}[\bar{u}] \sqrt{1+\bar{u}_x^2}\ dx.
	\end{align*}
	By Lemma \ref{lscgeneral}, we get
	$$\liminf_{\varepsilon''\to 0} \int_a^b\chi_{I} \sqrt{1+(u_{x}^{\varepsilon''})^2}\ dx \geq \int_a^b\chi_{I}\sqrt{1+\bar{u}_{x}^2}\ dx = \int_a^b\chi_{I}\sqrt{1+\psi_{x}^2}\ dx$$
	and
	$$\liminf_{\varepsilon''\to 0} \int_a^b \chi_{(a,b)\setminus I} \tilde{\alpha}[u^{\varepsilon''}] \sqrt{1+(u_x^{\varepsilon''})^2}\ dx \geq \int_a^b \chi_{(a,b)\setminus I} \tilde{\alpha}[\bar{u}] \sqrt{1+\bar{u}_x^2}\ dx.$$
	Therefore, by $0<\delta/3\leq\delta_{\psi,\alpha}$ and Lemma \ref{psidelta}, we have
	\begin{align*}
		\liminf_{\varepsilon''\to 0}\varepsilon''F_{\varepsilon''}[u^{\varepsilon''}] &\geq \int_{I}\sqrt{1+\psi_{x}^2}-\alpha\sqrt{1+\psi_{x}^2}\ dx\\
		&\geq \frac{\delta}{3}\left(\inf_{I}\sqrt{1+{\psi_x}^2}-\overline{\alpha}\sup_{I}\sqrt{1+{\psi_x}^2}\right)\\
		&\geq \frac{\delta}{3}\cdot\frac{1-\overline{\alpha}}{2} > 0,
	\end{align*}
	that is, $\liminf_{\varepsilon''\to 0}F_{\varepsilon''}[u^{\varepsilon''}]=\infty$.
	This is a contradiction.
\end{proof}

\begin{lemma}\label{redlem2}
	Let $u^{\varepsilon}\rightarrow\bar{u}$ in $X^{1,1}(a,b)$ as in Assumption \ref{assumpseq}.
	Define
	$$\delta_{\psi,\alpha,\bar{u}}:=\min\left\{\delta_{\psi,\alpha},\frac{\bar{x}_1-\bar{x}_0}{3},\ldots,\frac{\bar{x}_{N+1}-\bar{x}_{N}}{3}\right\}.$$
	Then for any $0<\delta\leq\delta_{\psi,\alpha,\bar{u}}$ there exists a sequence $\{v_\delta^{{\varepsilon}_j}\}_j\subset X^{2,1}(a,b)$ such that,
	\begin{enumerate}
		\item $v_\delta^{{\varepsilon}_j}\rightarrow\bar{u}$ in $W^{1,1}(a,b)$,
		\item $\liminf_{j\to\infty}F_{{\varepsilon}_j}[v_\delta^{{\varepsilon}_j}]\leq\liminf_{\varepsilon\to0}F_{\varepsilon}[u^\varepsilon]$,
		\item for any ${\varepsilon}_j>0$ and $0\leq i\leq N$ with $(\bar{x}_i,\bar{x}_{i+1})\subset(a,b)_0^{\bar{u}}$ the equality $v_\delta^{{\varepsilon}_j}\equiv\psi$ holds in $[\bar{x}_i+\delta,\bar{x}_{i+1}-\delta]$.
	\end{enumerate}
\end{lemma}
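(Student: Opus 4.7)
First I would invoke Lemma~\ref{redlem1} with parameter $\delta$ (allowed since $\delta\le\delta_{\psi,\alpha,\bar u}\le\delta_{\psi,\alpha}$) to extract a subsequence $\{u_\delta^{\varepsilon_j}\}\subset\{u^\varepsilon\}$ with $\liminf_{j\to\infty}F_{\varepsilon_j}[u_\delta^{\varepsilon_j}]=\liminf_{\varepsilon\to0}F_\varepsilon[u^\varepsilon]$ which touches $\psi$ in every $\delta$-wide subinterval of $(a,b)_0^{\bar u}$. For each $i$ with $(\bar x_i,\bar x_{i+1})\subset(a,b)_0^{\bar u}$ I would apply this touching property to the intervals $(\bar x_i,\bar x_i+\delta)$ and $(\bar x_{i+1}-\delta,\bar x_{i+1})$ (which lie in $(\bar x_i,\bar x_{i+1})$ thanks to $\delta\le(\bar x_{i+1}-\bar x_i)/3$) to pick touching points $y_i^j\in[\bar x_i,\bar x_i+\delta]$ and $z_i^j\in[\bar x_{i+1}-\delta,\bar x_{i+1}]$, and then define
\[
v_\delta^{\varepsilon_j}(x):=\begin{cases}\psi(x),& x\in\bigcup_i[y_i^j,z_i^j],\\ u_\delta^{\varepsilon_j}(x),&\text{otherwise.}\end{cases}
\]
Since $y_i^j\le\bar x_i+\delta<\bar x_{i+1}-\delta\le z_i^j$, property~(3) holds by construction.

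Next I would verify $v_\delta^{\varepsilon_j}\in X^{2,1}(a,b)$ and the $W^{1,1}$-convergence. Continuity at each gluing point is immediate from $u_\delta^{\varepsilon_j}(y_i^j)=\psi(y_i^j)$; the first derivatives match because the non-negative $C^1$ function $u_\delta^{\varepsilon_j}-\psi$ attains its minimum $0$ at $y_i^j$, forcing $(u_\delta^{\varepsilon_j})_x(y_i^j)=\psi_x(y_i^j)$; the second derivative is then an $L^1$ function as a piecewise combination of $(u_\delta^{\varepsilon_j})_{xx}$ and $\psi_{xx}$, so $v_\delta^{\varepsilon_j}\in W^{2,1}$. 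The constraint $v_\delta^{\varepsilon_j}\ge\psi$ is obvious, and the boundary condition is preserved because if a boundary endpoint lies in a coincidence interval then by continuity $g=\psi$ there. The modification lies entirely in $(a,b)_0^{\bar u}=\{\bar u=\psi\}$, so $v_\delta^{\varepsilon_j}-\bar u$ vanishes on $\bigcup_i[y_i^j,z_i^j]$ and elsewhere equals $u_\delta^{\varepsilon_j}-\bar u$, which tends to $0$ in $W^{1,1}$ by assumption.

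The crux is the energy estimate $F_{\varepsilon_j}[v_\delta^{\varepsilon_j}]\le F_{\varepsilon_j}[u_\delta^{\varepsilon_j}]+\varepsilon_j C_\psi$ with $C_\psi:=\int_a^b\kappa_\psi^2\sqrt{1+\psi_x^2}\,dx<\infty$. The bending energy acquired by replacing $u_\delta^{\varepsilon_j}$ with $\psi$ on $\bigcup_i[y_i^j,z_i^j]$ is bounded by $C_\psi$, while what is discarded is non-negative. For the $E_0$-part, the key geometric observation is that Lemma~\ref{redlem1}(2) forces every connected component $(p,q)$ of $\{u_\delta^{\varepsilon_j}>\psi\}$ contained in $(\bar x_i,\bar x_{i+1})$ to have width at most $\delta$: otherwise $(p,p+\delta)$ would be a $\delta$-wide subinterval of $(a,b)_0^{\bar u}$ with no touching, a contradiction. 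Since $\delta\le\delta_{\psi,\alpha}$, Lemma~\ref{reglem1} applies to each such bubble and shows that replacing $u_\delta^{\varepsilon_j}$ by $\psi$ on it strictly decreases $E_0$; the modification of $v_\delta^{\varepsilon_j}$ on $[y_i^j,z_i^j]$ is nothing but the disjoint union of these bubble replacements (on the complement $u_\delta^{\varepsilon_j}$ already equals $\psi$), whence $E_0[v_\delta^{\varepsilon_j}]\le E_0[u_\delta^{\varepsilon_j}]$. Taking $\liminf$ and combining with Lemma~\ref{redlem1}(1) yields property~(2). The principal obstacle is exactly this dovetailing of the bubble-width bound (a qualitative consequence of Lemma~\ref{redlem1}) with the $E_0$-decrease mechanism of Lemma~\ref{reglem1}; everything else is routine $W^{2,1}$-gluing.
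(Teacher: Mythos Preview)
Your proof is correct and follows the same construction as the paper: extract the subsequence from Lemma~\ref{redlem1}, pick touching points near the two ends of each coincidence interval $(\bar x_i,\bar x_{i+1})\subset(a,b)_0^{\bar u}$, replace $u_\delta^{\varepsilon_j}$ by $\psi$ between them, and verify the three conclusions. The $W^{2,1}$-gluing, the convergence~(1), and property~(3) are handled exactly as in the paper.

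The one substantive difference is the $E_0$-decrease step. You argue via a bubble decomposition: every connected component of $\{u_\delta^{\varepsilon_j}>\psi\}$ inside $[y_i^j,z_i^j]$ has width at most $\delta\le\delta_{\psi,\alpha}$ by Lemma~\ref{redlem1}(2), so Lemma~\ref{reglem1} applies to each bubble and summing yields $E_0[v_\delta^{\varepsilon_j}]\le E_0[u_\delta^{\varepsilon_j}]$. The paper instead writes the one-line identity
\[
E_0[u_\delta^{\varepsilon_j}]-E_0[v_\delta^{\varepsilon_j}]=E_0[u_\delta^{\varepsilon_j}-v_\delta^{\varepsilon_j}+\bar u]-E_0[\bar u]
\]
(both sides equal the integrand difference on $\bigcup_i[y_i^j,z_i^j]$, where $\bar u=\psi$) and appeals directly to the minimality of $\bar u$. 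The paper's route is shorter and bypasses the width bound entirely; your route is more explicit and recycles Lemma~\ref{reglem1}, making transparent exactly where the adhesion inequality $\alpha<1$ enters. Either argument is fine.
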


\begin{proof}
	Fix $0<\delta\leq\delta_{\psi,\alpha,\bar{u}}$ and denote the subsequence obtained in Lemma \ref{redlem1} by $\{u_\delta^{{\varepsilon}_j}\}_j$.
	Recall that, by Lemma \ref{redlem1}, for any $\varepsilon_j$ and $0\leq i\leq N$ with $(\bar{x}_i,\bar{x}_{i+1})\subset(a,b)_0^{\bar{u}}$ there exist $y_i^{\varepsilon_j}\in(\bar{x}_i,\bar{x}_i+\delta]$ and $y_{i+1}^{\varepsilon_j}\in[\bar{x}_{i+1}-\delta,\bar{x}_{i+1})$ such that $u_\delta^{{\varepsilon}_j}=\psi$ at $y_i^{\varepsilon_j}$ and $y_{i+1}^{\varepsilon_j}$.
	Moreover, since $u_\delta^{\varepsilon_j}$ is continuously differentiable and $u_\delta^{\varepsilon_j}\geq\psi$, we also have $(u_\delta^{\varepsilon_j})_x=\psi_x$ at $y_i^{\varepsilon_j}$ and $y_{i+1}^{\varepsilon_j}$. \par
	We define $v_\delta^{{\varepsilon}_j}$ by replacing $u_\delta^{\varepsilon_j}$ to be $\psi$ in $(y_{i}^{\varepsilon_j},y_{i+1}^{\varepsilon_j})$ for all $i$ such that $(\bar{x}_i,\bar{x}_{i+1})\subset(a,b)_0^{\bar{u}}$.
	Then $v_\delta^{{\varepsilon}_j}$ belongs to $X^{2,1}(a,b)$ since $u_\delta^{\varepsilon_j}=\psi$ and $(u_\delta^{\varepsilon_j})_x=\psi_x$ at all $y_i^{\varepsilon_j}$.
	Furthermore, the sequence $\{v_\delta^{{\varepsilon}_j}\}$ satisfies all the conditions in this lemma.
	Indeed, the first condition follows by
	$$\|v_\delta^{{\varepsilon}_j}-\bar{u}\|_{W^{1,1}}\leq\|u_\delta^{{\varepsilon}_j}-\bar{u}\|_{W^{1,1}},$$
	the second one by
	$$E_0[u_\delta^{{\varepsilon}_j}]-E_0[v_\delta^{{\varepsilon}_j}]=E_0[u_\delta^{{\varepsilon}_j}-v_\delta^{{\varepsilon}_j}+\bar{u}]-E_0[\bar{u}]\geq0$$
	and
	$$F_{{\varepsilon}_j}[u_\delta^{{\varepsilon}_j}]-F_{{\varepsilon}_j}[v_\delta^{{\varepsilon}_j}]\geq-\varepsilon_j\int_a^b\kappa_\psi^2\sqrt{1+\psi_x^2}\ dx\xrightarrow{j\to\infty}\geq0,$$
	and the third one by the definition of $v_\delta^{{\varepsilon}_j}$.
\end{proof}

\begin{lemma}\label{redlem3}
	Let $u^{\varepsilon}\rightarrow\bar{u}$ and $\delta_{\psi,\alpha,\bar{u}}$ as in Lemma \ref{redlem2}.
	Then for any $0<\delta\leq\frac{\delta_{\psi,\alpha,\bar{u}}}{2}$ there exists a sequence $\{w_\delta^{{\varepsilon}_j}\}_j\subset X^{2,1}(a,b)$ such that,
	\begin{enumerate}
		\item $\liminf_{j\to\infty}F_{{\varepsilon}_j}[w_\delta^{{\varepsilon}_j}]\leq\liminf_{\varepsilon\to0}F_{\varepsilon}[u^{\varepsilon}]$,
		\item there exists $M>0$ such that $w^{{\varepsilon}_j}_\delta$ is $\delta$-associated with $\bar{u}$ for any $j \geq M$.
	\end{enumerate}
\end{lemma}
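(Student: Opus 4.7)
My plan is to start from the sequence $\{v_\delta^{\varepsilon_j}\}$ furnished by Lemma~\ref{redlem2} and surgically modify it on $\delta$-windows of the interior transition points $\bar x_i$ ($i=1,\dots,N$) so that the non-coincidence set of the resulting $w_\delta^{\varepsilon_j}$ is a near-identity shift of that of $\bar u$. The key preliminary observation is that $W^{1,1}$-convergence in one dimension upgrades to uniform convergence via the embedding $W^{1,1}\hookrightarrow C^0$, so $v_\delta^{\varepsilon_j}\to\bar u$ uniformly. Combined with Theorem~\ref{regezero}, which says $\bar u$ is a segment strictly above $\psi$ on each non-coincidence interval, this forces $v_\delta^{\varepsilon_j}>\psi$ on $[\bar x_{i-1}+\delta,\bar x_i-\delta]$ for all sufficiently large $j$. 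Together with Lemma~\ref{redlem2}, this confines every possible discrepancy between $(a,b)_+^{v_\delta^{\varepsilon_j}}$ and $(a,b)_+^{\bar u}$ to the windows $[\bar x_i-\delta,\bar x_i+\delta]$.

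For each $\bar x_i$, assuming by symmetry that its right-hand side is a coincidence interval, I would set $x_i^{\varepsilon_j}$ to be the infimum of the closed set $\{x\in[\bar x_i-\delta,\bar x_i+\delta]:v_\delta^{\varepsilon_j}(x)=\psi(x)\}$; this set is non-empty (it contains $\bar x_i+\delta$ by Lemma~\ref{redlem2}) and excludes $\bar x_i-\delta$ (by the preliminary observation for large $j$), so $x_i^{\varepsilon_j}\in(\bar x_i-\delta,\bar x_i+\delta]$. Define
\[
w_\delta^{\varepsilon_j}:=\begin{cases}\psi&\text{on } [x_i^{\varepsilon_j},\bar x_i+\delta]\text{ for each }i,\\ v_\delta^{\varepsilon_j}&\text{elsewhere}.\end{cases}
\]
Because $v_\delta^{\varepsilon_j}\in C^1$ satisfies $v_\delta^{\varepsilon_j}\ge\psi$ with equality at $x_i^{\varepsilon_j}$, the first derivatives automatically match there (and likewise at $\bar x_i+\delta$, by Lemma~\ref{redlem2}); thus the concatenation is $C^1$, each piece lies in $W^{2,1}$, and so $w_\delta^{\varepsilon_j}\in X^{2,1}(a,b)$. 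The non-coincidence intervals of $w_\delta^{\varepsilon_j}$ are precisely $(x_{i-1}^{\varepsilon_j},x_i^{\varepsilon_j})$, in one-to-one correspondence with those of $\bar u$, and a piecewise-affine interpolation sending $\bar x_i\mapsto x_i^{\varepsilon_j}$ produces a strictly increasing $S:[a,b]\to[a,b]$ with $|Sx-x|\le\delta$. The sup-norm bound $\|w_\delta^{\varepsilon_j}-\bar u\|_\infty\le\delta$ picks up a Lipschitz constant $L$ of $\bar u-\psi$, but this is cured by performing the above construction with internal parameter $\delta/\max(L,1)$ instead of $\delta$.

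For the energy comparison, the bending contribution changes only on the $N$ modification intervals, where $w=\psi$ contributes a $j$-independent constant, so $\varepsilon_j\int\kappa_{w_\delta^{\varepsilon_j}}^2\sqrt{1+(w_\delta^{\varepsilon_j})_x^2}\,dx\le\varepsilon_j\int\kappa_{v_\delta^{\varepsilon_j}}^2\sqrt{1+(v_\delta^{\varepsilon_j})_x^2}\,dx+O(\varepsilon_j)$. For the $E_0$ part, decompose $\{v_\delta^{\varepsilon_j}>\psi\}$ inside each modification interval into its connected components $(p_k,q_k)$, on each of which $v_\delta^{\varepsilon_j}$ agrees with $\psi$ at both endpoints. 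The arclength lower bound and the mean value theorem applied to $\psi$ give
\[
\int_{p_k}^{q_k}\sqrt{1+(v_\delta^{\varepsilon_j})_x^2}\,dx\ge\sqrt{(q_k-p_k)^2+(\psi(q_k)-\psi(p_k))^2}\ge(q_k-p_k)\inf_{[p_k,q_k]}\sqrt{1+\psi_x^2}.
\]
Since $q_k-p_k\le 2\delta\le\delta_{\psi,\alpha}$, Lemma~\ref{psidelta} produces $\int_{p_k}^{q_k}\sqrt{1+(v_\delta^{\varepsilon_j})_x^2}\ge\int_{p_k}^{q_k}\alpha\sqrt{1+\psi_x^2}$; summing and adjoining the part where $v_\delta^{\varepsilon_j}=\psi$ (which contributes equally to both sides) yields $E_0[v_\delta^{\varepsilon_j}]\ge E_0[w_\delta^{\varepsilon_j}]$. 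Hence $F_{\varepsilon_j}[w_\delta^{\varepsilon_j}]\le F_{\varepsilon_j}[v_\delta^{\varepsilon_j}]+O(\varepsilon_j)$, and taking $\liminf$ with Lemma~\ref{redlem2} closes the argument.

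The main obstacle I foresee is ensuring the energy inequality goes the right way: naively replacing $v_\delta^{\varepsilon_j}$ by $\psi$ on bubbles could increase $E_0$, since the reduced adhesion coefficient $\alpha<1$ is energetically preferred to the unit length coefficient. The smallness assumption $2\delta\le\delta_{\psi,\alpha}$, built into $\delta\le\delta_{\psi,\alpha,\bar u}/2$, is precisely what makes Lemma~\ref{psidelta} applicable and tips the inequality in the favorable direction.
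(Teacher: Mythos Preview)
Your approach is essentially the same as the paper's: start from the sequence of Lemma~\ref{redlem2}, use the one-dimensional embedding $W^{1,1}\hookrightarrow C^0$ to localize all discrepancies between $(a,b)_+^{v_\delta^{\varepsilon_j}}$ and $(a,b)_+^{\bar u}$ to small windows around the free-boundary points, take the extremal touching point in each window, and flatten to $\psi$ beyond it. Two cosmetic differences: the paper secures $\|w-\bar u\|_\infty\le\delta$ by first passing to a smaller window radius $\delta_*<\delta$ chosen so that $\sup_{|x-\bar x_i|\le\delta_*}|\bar u-\psi|\le\delta$, rather than via your Lipschitz rescaling $\delta/\max(L,1)$; and for the $E_0$ comparison the paper simply cites Lemma~\ref{reglem1}, which packages exactly the bubble-by-bubble estimate you wrote out with Lemma~\ref{psidelta}.

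There is, however, one genuine gap. You modify only around the \emph{interior} transition points $\bar x_1,\dots,\bar x_N$ and then assert that the non-coincidence intervals of $w_\delta^{\varepsilon_j}$ are precisely the $(x_{i-1}^{\varepsilon_j},x_i^{\varepsilon_j})$. This can fail at the endpoints of $(a,b)$: if $\bar u(a)=\psi(a)$ and $(\bar x_0,\bar x_1)\subset(a,b)_+^{\bar u}$, then although $\bar u>\psi$ on $(a,\bar x_1)$, the gap $\bar u-\psi$ vanishes at $a$, so uniform convergence gives no control on $(a,a+\delta)$ and $v_\delta^{\varepsilon_j}$ may touch $\psi$ there for arbitrarily large $j$. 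Your $w_\delta^{\varepsilon_j}$, unmodified near $a$, would then have extra coincidence points and could not be $\delta$-associated with $\bar u$. The paper treats this case in a separate paragraph: one takes $y_0^{\varepsilon_j}:=\max\{y\in(a,a+\delta_*]:v_{\delta_*}^{\varepsilon_j}(y)=\psi(y)\}$, flattens $v$ to $\psi$ on $(a,y_0^{\varepsilon_j}]$, and sets $S\bar x_0=y_0^{\varepsilon_j}$ (and symmetrically at $b$). The same fix applies verbatim to your construction.
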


\begin{proof}
	We first assume that $\bar{u}>\psi$ on $\partial(a,b)$.
	Fix $0<\delta\leq\frac{\delta_{\psi,\alpha,\bar{u}}}{2}$.
	Define
	$$I_{\delta_*}^i:=\left\{y\in (a,b) \mid |y-\bar{x}_{i}|\leq\delta_*\right\}\subset\subset(a,b)$$
	for $1\leq i\leq N$ and denote by $I_{\delta_*}$ the (disjoint) union of $I_{\delta_*}^i$, where $\delta_*<\delta$ is taken so that
	$$\sup\left\{|\bar{u}(x)-\psi(x)|\left|\ x\in I_{\delta_*} \right\}\right.\leq\delta.$$
	Take $\{v_{\delta_*}^{{\varepsilon}_j}\}_j$ as in Lemma \ref{redlem2} for $\delta_*$.
	Since
	$$\inf{\left\{\bar{u}(x)-\psi(x) \mid x\in(a,b)_+^{\bar{u}}\setminus I_{\delta_*}\right\}}>0$$
	and $v_{\delta_*}^{{\varepsilon}_j}\rightarrow\bar{u}$ as $j\to\infty$ uniformly (by Sobolev embedding), there exists $M>0$ such that $\|v_{\delta_*}^{\varepsilon_j}-\bar{u}\|_\infty\leq\delta$ and $v_\delta^{\varepsilon_j}>\psi$ in $(a,b)_+^{\bar{u}}\setminus I_{\delta_*}$ for any $j\geq M$.
	On the other hand, for any ${\varepsilon_j}>0$ we find $v_{\delta_*}^{{\varepsilon}_j}\equiv\psi$ in $(a,b)_0^{\bar{u}}\setminus I_{\delta_*}$ by the third condition of Lemma \ref{redlem2}. \par
	Therefore, to prove this lemma, we only have to reduce $v_{\delta_*}^{\epsilon_j}$ in $I_{\delta_*}$.
	Define
	$$y_i^{\varepsilon_j}:=\min\{y\in I_{\delta_*}^i \mid v_{\delta_*}^{\varepsilon_j}(y)=\psi(y)\}$$
	for $i$ with $(\bar{x}_i,\bar{x}_{i+1})\subset(a,b)_0^{\bar{u}}$.
	Then we find $v_{\delta_*}^{\varepsilon_j}=\psi$ on $\partial[y_i^{\varepsilon_j},\bar{x}_i+{\delta_*}]$.
	Thus, by Lemma \ref{reglem1}, the reduction replacing $v_{\delta_*}^{\varepsilon_j}$ to be $\psi$ only in $[y_i^{\varepsilon_j},\bar{x}_i+{\delta_*}]$ does not increase $E_0$ and retains the function in $X^{2,1}(a,b)$.
	Similarly, for $i$ with $(\bar{x}_i,\bar{x}_{i+1})\subset(a,b)_+^{\bar{u}}$, we can get the result reversed left and right.
	The consequence follows by defining $w_\delta^{{\varepsilon}_j}$ as the function reduced as above for all $1\leq i\leq N$.
	Indeed, the function $w_\delta^{{\varepsilon}_j}$ is $\delta$-associated with $\bar{u}$ for any $j\geq M$ by the definition of $w_\delta^{\varepsilon_j}$. 
	Here $S\bar{x}_i=y_i^{\varepsilon_j}$.
	Moreover, the first condition follows by for any $j\geq M$ we have $E_0[v_{\delta_*}^{{\varepsilon}_j}]\geq E_0[w_\delta^{{\varepsilon}_j}]$ and
	$$F_{\varepsilon_j}[v_{\delta_*}^{{\varepsilon}_j}]-F_{\varepsilon_j}[w_\delta^{{\varepsilon}_j}]\geq -\varepsilon_j\int_a^b\kappa_\psi^2\sqrt{1+\psi_x^2}\ dx\xrightarrow{j\to\infty}\geq0.$$\par
	Finally, we mention the case $\bar{u}=\psi$ at $\bar{x}_0(=a)$ or $\bar{x}_{N+1}(=b)$ or both.
	In addition to the above proof, we only need to reduce $v_{\delta_*}^{{\varepsilon}_j}$ near $\bar{x}_0$ or $\bar{x}_{N+1}$ or both.
	Let us only consider for $\bar{x}_0$ since it is similar for $\bar{x}_{N+1}$.
	When we take $\delta_*$, the similar condition for $i=0$ shall be added.
	If $\bar{u}(\bar{x}_0)=\psi(\bar{x}_0)$ and $(\bar{x}_0,\bar{x}_1)\subset(a,b)_0^{\bar{u}}$, then the consequence follows by replacing $v_{\delta_*}^{{\varepsilon}_j}$ by $\psi$ in $(\bar{x}_0,\bar{x}_0+{\delta_*}]$.
	If $\bar{u}(\bar{x}_0)=\psi(\bar{x}_0)$ and $(\bar{x}_0,\bar{x}_1)\subset(a,b)_+^{\bar{u}}$, then $v_{\delta_*}^{{\varepsilon}_j}$ may touch $\psi$ near $\bar{x}_0$ even if $j$ is sufficiently large.
	Nevertheless, the consequence follows by replacing $v_{\delta_*}^{{\varepsilon}_j}$ by $\psi$ in $(\bar{x}_0,y_0^{\varepsilon_j}]$, where
	$$y_0^{\varepsilon_j}:=\max\{y\in(\bar{x}_0,\bar{x}_0+{\delta_{*}}] \mid v_{\delta_*}^{\varepsilon_j}(y)=\psi(y)\}\ (=S\bar{x}_0).$$
	The proof is completed.
\end{proof}

\subsection{Lower estimate for geometric energies}\label{liminfsub3}

In this subsection, we give a lower estimate for the functions as obtained in \S\ref{liminfsub2}.
In Proposition \ref{lbprop2}, for functions $\delta$-associated with a minimizer of $E_0$, we rewrite the energy in order to consider geometrically, and obtain a key estimate in Proposition \ref{lbprop3}.

\begin{definition}[$W^{2,1}$-curve and geometric energies]\label{defcurve}\ 
	\begin{enumerate}
		\item We say that $\gamma:[0,1]\rightarrow\mathbb{R}^2$ is a {\it (regular) $W^{2,1}$-curve} if the two components $\gamma_1$ and $\gamma_2$ belong to $W^{2,1}(0,1)\subset C^1([0,1])$ and
		$$|\dot{\gamma}|=\sqrt{\langle\dot{\gamma},\dot{\gamma}\rangle}=\sqrt{\gamma_1'^2+\gamma_2'^2}>0\ \text{in }[0,1].$$
		\item For a $W^{2,1}$-curve $\gamma$, we denote by $\bar{\gamma}$ the geodesic (segment) from $\gamma(0)$ to $\gamma(1)$.
		\item For a $W^{2,1}$-curve $\gamma$, we denote the tension (length) by
		$$\mathcal{L}[\gamma]:=\int_0^1|\dot{\gamma}(t)|dt.$$
		\item For a $W^{2,1}$-curve $\gamma$, we denote the bending energy by
		$$\mathcal{B}[\gamma]:=\int_0^1|\kappa_\gamma(t)|^2|\dot{\gamma}(t)|dt,$$
		where $\kappa_\gamma$ is the curvature of $\gamma$ defined by $\kappa_\gamma:=|\dot{\gamma}|^{-3}|\gamma_1'\gamma_2''-\gamma_2'\gamma_1''|$.
		\item For a $W^{2,1}$-curve $\gamma$, we define the {\it boundary warp energy} by 
		$$\mathcal{W}[\gamma]:=4\left(2\sqrt{2}-\sqrt{1+\cos{\theta_\gamma^0}}-\sqrt{1+\cos{\theta_\gamma^1}}\right),$$
		where $\theta_\gamma^0$ and $\theta_\gamma^1$ are the {\it boundary warp angles} of $\gamma$ defined by
		$$\angle\gamma\bar{\gamma}:=\arccos{(\langle\dot{\gamma},\dot{\bar{\gamma}}\rangle/|\dot{\gamma}||\dot{\bar{\gamma}}|)}\in[0,\pi]$$
		at $\gamma(0)$ and $\gamma(1)$ respectively.
	\end{enumerate}
\end{definition}

\begin{remark}\ 
	$\mathcal{L}$, $\mathcal{B}$, $\mathcal{W}$, $\theta_\gamma^0$ and $\theta_\gamma^1$ are well-defined for $W^{2,1}$-curves, i.e. they are invariant by $W^{2,1}$-reparameterization.
	They are also invariant with respect to translation, reflection and rotation.
	To be more precise, see Appendix.
	Moreover, for any $u\in W^{2,1}$ on a bounded interval, the graph of $u$ is a $W^{2,1}$-curve.
\end{remark}

\begin{proposition}\label{lbprop2}
	Let $\bar{u}\in{\rm argmin}_{X^{1,1}}E_0$ be $[\bar{x}_0;\ldots;\bar{x}_{N+1}]$-regular and $m\in\mathbb{N}$ be the number of the connected components of $(a,b)_+^{\bar{u}}$.
	Then there exists $\delta(\psi,\bar{u})>0$ such that for any $\varepsilon>0$, $0<\delta\leq\delta(\psi,\bar{u})$ and $u\in X^{2,1}(a,b)$ which is $\delta$-associated with $\bar{u}$ the inequality holds:
	\begin{eqnarray}\label{lbrep1}
		F_\varepsilon[u]\geq\sum_{k=1}^{m}\varepsilon\mathcal{B}[\gamma^k]+\frac{1}{\varepsilon}(\mathcal{L}[\gamma^k]-\mathcal{L}[\bar{\gamma}^k]),
	\end{eqnarray}
	where $\gamma^k$ $(1\leq k\leq m)$ is a $W^{2,1}$-curve which is the graph of $u$ on the $k$-th connected component of $(a,b)_+^{u}$.\par
	Moreover, the difference between the boundary warp angle of $\gamma^k$ at an endpoint and the contact angle of $\bar{u}$ with $\psi$ at the corresponding endpoint of the $k$-th connected component of $(a,b)_+^{\bar{u}}$ tends to be zero as $\delta\downarrow0$ independently of $u$.
\end{proposition}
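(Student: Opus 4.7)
The plan is to split $F_\varepsilon[u]$ into its bending and its adhesion--length parts and to bound each separately. The bending part is handled by simply discarding the contribution on the coincidence set, while the adhesion--length part is controlled by comparing $u$ with an explicit competitor $\tilde u$ built from straight segments, for which the minimality of $\bar u$ yields the required lower bound.

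Denote by $(\bar x_{p_k},\bar x_{p_k+1})$, $k=1,\ldots,m$, the connected components of $(a,b)_+^{\bar u}$, and by $(y_{p_k},y_{p_k+1}):=(S\bar x_{p_k},S\bar x_{p_k+1})$ the corresponding components of $(a,b)_+^u$ obtained via the $\delta$-association map $S$. For the bending part, since $u\equiv\psi$ on $(a,b)_0^u$ (so $\kappa_u=\kappa_\psi$ a.e.\ there) and $u|_{(y_{p_k},y_{p_k+1})}$ parameterizes $\gamma^k$, discarding the nonnegative contribution on $(a,b)_0^u$ yields
\begin{equation*}
  \varepsilon\int_a^b\kappa_u^2\sqrt{1+u_x^2}\,dx \ge \varepsilon\sum_{k=1}^m\mathcal B[\gamma^k].
\end{equation*}

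For the adhesion--length part, define the competitor $\tilde u$ by $\tilde u\equiv\psi$ on $(a,b)_0^u$ and, on each component of $(a,b)_+^u$, the straight segment joining the graph endpoints $(y_{p_k},u(y_{p_k}))$ and $(y_{p_k+1},u(y_{p_k+1}))$ (which agree with $(y_{p_k},\psi(y_{p_k}))$ and $(y_{p_k+1},\psi(y_{p_k+1}))$ unless the endpoint is $a$ or $b$). The main technical point, which fixes the choice of $\delta(\psi,\bar u)$, is to ensure that each perturbed segment lies above $\psi$, so that $\tilde u\in X^{1,1}(a,b)$. This follows because Theorem \ref{regezero} gives $\cos\theta_i=\alpha(\bar x_i)<1$, so at each endpoint of every segment of $\bar u$ the slope strictly dominates $\psi_x$; an $O(\delta)$ endpoint shift preserves strict slope domination on the extended interval by the smoothness of $\psi$, and then $\tilde u\ge\psi$ follows by integration from the endpoint on $\psi$. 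Once $\tilde u\in X^{1,1}(a,b)$ is secured, the minimality of $\bar u$ gives $E_0[\tilde u]\ge E_0[\bar u]$, and since $\tilde u\equiv u$ on $(a,b)_0^u$ one computes
\begin{equation*}
  E_0[u]-E_0[\tilde u] = \sum_{k=1}^m\bigl(\mathcal L[\gamma^k]-\mathcal L[\bar\gamma^k]\bigr).
\end{equation*}
Combining with the bending estimate and with $\inf_{X^{1,1}}E_0=E_0[\bar u]$ from Assumption \ref{assumpseq} yields (\ref{lbrep1}).

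The ``moreover'' statement follows directly from the $\delta$-association. Since $u\in C^1$ and $u(y_{p_k})=\psi(y_{p_k})$, one has $u_x(y_{p_k})=\psi_x(y_{p_k})$, so $\dot\gamma^k$ at $\gamma^k(0)$ has direction $(1,\psi_x(y_{p_k}))$; meanwhile $\dot{\bar\gamma}^k$ has direction $(y_{p_k+1}-y_{p_k},\psi(y_{p_k+1})-\psi(y_{p_k}))$. Both directions depend only on the shifted endpoints, which satisfy $|y_{p_k}-\bar x_{p_k}|\le\delta$, so as $\delta\downarrow0$ they converge uniformly in $u$ to $(1,\psi_x(\bar x_{p_k}))$ and to the direction of $\bar u$'s segment on $(\bar x_{p_k},\bar x_{p_k+1})$, respectively. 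The limiting angle between them is precisely the contact angle of $\bar u$ with $\psi$ at $\bar x_{p_k}$, and the analogous statement holds at $\bar x_{p_k+1}$. The most delicate step of the argument is the admissibility verification $\tilde u\in X^{1,1}(a,b)$; once that is in hand, everything else is bookkeeping.
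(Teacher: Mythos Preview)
Your proof is correct and follows essentially the same approach as the paper. Both construct the competitor by replacing $u$ with straight chords over each component of $(a,b)_+^u$ (the paper calls it $u_\delta$, you call it $\tilde u$), invoke positivity of the contact angles from Theorem~\ref{regezero} to secure admissibility for small $\delta$, use the minimality $E_0[\tilde u]\ge E_0[\bar u]$ to replace $\inf E_0$, and then read off the decomposition into $\mathcal{B}[\gamma^k]$ and $\mathcal{L}[\gamma^k]-\mathcal{L}[\bar\gamma^k]$ over the components; the angle convergence in the ``moreover'' part is argued in both via the explicit dependence of the chord direction and the tangent direction $(1,\psi_x)$ on the shifted endpoints $y_{p_k}=S\bar x_{p_k}$ alone.
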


\begin{proof}
	Fix $\delta>0$ and $u\in W^{2,1}(a,b)$ which is $\delta$-associated with $\bar{u}$.
	We denote $x_i=S\bar{x}_i$ for $0\leq i\leq N+1$ (the shift operator $S$ is defined in Definition \ref{deltaasso}) and define $u_\delta\in W^{1,1}(a,b)$ by
	\begin{align*}
		u_\delta(x):=
		\begin{cases}
			\frac{u(x_{i+1})-u(x_i)}{x_{i+1}-x_i}(x-x_i)+u(x_i) & (x\in(x_i,x_{i+1})\subset(a,b)_+^{u}),\\
			\psi(x) & (\text{otherwise}).
		\end{cases}
	\end{align*}
	Then, since any contact angle of $\bar{u}$ with $\psi$ is positive by Theorem \ref{regezero}, there exists $\delta(\psi,\bar{u})>0$ such that $u_\delta$ is in $X^{1,1}(a,b)$ for any $0<\delta\leq\delta(\psi,\bar{u})$.
	Therefore, since $E_0[u_\delta]\geq E_0[\bar{u}]$ by the minimality of $\bar{u}$, we have
	$$F_{\varepsilon}[u]\geq \varepsilon\int_{a}^{b} {\kappa_u}^2\sqrt{1+u_x^2}\ dx+\frac{1}{\varepsilon}(E_0[u]-E_0[u_\delta]).$$
	Moreover, we can restrict the domain of integration of the right-hand term to the disjoint union of $(x_i,x_{i+1})\subset(a,b)_+^{u}$ thus we have
	\begin{multline*}
		F_\varepsilon[u]\geq\sum_{\substack{0\leq i\leq N\\ (x_i,x_{i+1})\subset(a,b)_+^u}}\left(\varepsilon\int_{x_i}^{x_{i+1}} \kappa_u^2\sqrt{1+u_x^2}\ dx\right.\\ 
		\left.+\frac{1}{\varepsilon}\int_{x_i}^{x_{i+1}} \sqrt{1+u_x^2} - \sqrt{1+\tan^2{\tilde{\theta}_i}}\ dx \right),
	\end{multline*}
	where $\tilde{\theta}_i:=\arctan{\left(\frac{u(x_{i+1})-u(x_{i})}{x_{i+1}-x_i}\right)}$.\par
	By the above argument, if we take $\gamma^k$ as the statement then (\ref{lbrep1}) holds.
	Now we take any $1\leq i\leq N$ with $(x_i,x_{i+1})\subset(a,b)_+^u$.
	The boundary warp angle of $\gamma^k$ at $(x_i,u(x_i))$ is $\tilde{\theta}_i-\theta_u(x_i)=\tilde{\theta}_i-\theta_\psi(x_i)$, that is,
	$$\arctan{\left(\frac{u(x_{i+1})-u(x_{i})}{x_{i+1}-x_i}\right)}-\theta_\psi(x_i),$$
	and the contact angle of $\bar{u}$ at $\bar{x}_i$ is $\theta_{\bar{u}}(\bar{x}_i+)-\theta_\psi(\bar{x}_i)$, that is,
	$$\arctan{\left(\frac{\bar{u}(\bar{x}_{i+1})-\bar{u}(\bar{x}_{i})}{\bar{x}_{i+1}-\bar{x}_i}\right)}-\theta_\psi(\bar{x}_i). $$
	Therefore, the difference of them tends to be zero as $\delta\downarrow0$ not depending on $u$ (but only $\psi$ and $\bar{u}$) since $\|u-\bar{u}\|_\infty\leq\delta$, $|x_i-\bar{x}_i|\leq\delta$, and the functions $\bar{u}$, $\arctan(\cdot)$ and $\theta_\psi$ are uniformly continuous.
	We can similarly consider for $1\leq i\leq N$ with $(x_i,x_{i+1})\subset(a,b)_0^u$, thus the proof is completed.
\end{proof}

We now obtain a key estimate for the right-hand term of (\ref{lbrep1}).

\begin{proposition}\label{lbprop3}
	For any $\varepsilon>0$ and $W^{2,1}$-curve $\gamma$ with $\theta_\gamma^0,\theta_\gamma^1\in[0,\pi/2)$, the inequality holds:
	$$\varepsilon\mathcal{B}[\gamma]+\frac{1}{\varepsilon}(\mathcal{L}[\gamma]-\mathcal{L}[\bar{\gamma}])\geq\mathcal{W}[\gamma].$$
\end{proposition}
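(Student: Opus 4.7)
The plan is to recognize the inequality as a Modica--Mortola type bound after a geometric reduction. By the reparametrization and rigid-motion invariance noted after Definition \ref{defcurve}, I would first reparametrize $\gamma$ by arc length on $[0,L]$ with $L=\mathcal{L}[\gamma]$, then translate and rotate so that $\gamma(0)=(0,0)$ and $\gamma(L)=(D,0)$ with $D=\mathcal{L}[\bar{\gamma}]$. Choose a continuous lift $\tilde\theta\in W^{1,1}(0,L)$ of the tangent angle so that $\dot\gamma=(\cos\tilde\theta,\sin\tilde\theta)$; then $\kappa_\gamma=|\dot{\tilde\theta}|$ and $\mathcal{B}[\gamma]=\int_0^L\dot{\tilde\theta}^{\,2}\,ds$. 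Projecting $\gamma(L)-\gamma(0)=\int_0^L\dot\gamma\,ds$ onto the axes gives $\int_0^L\cos\tilde\theta\,ds=D$, hence
$$\mathcal{L}[\gamma]-\mathcal{L}[\bar{\gamma}]=\int_0^L(1-\cos\tilde\theta)\,ds,$$
together with the balance $\int_0^L\sin\tilde\theta\,ds=0$. Shifting by a multiple of $2\pi$ I may also assume $\tilde\theta(0)\in(-\pi/2,\pi/2)$, compatible with $\theta_\gamma^0\in[0,\pi/2)$.

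Next I would apply the pointwise AM--GM inequality $\varepsilon a^2+b/\varepsilon\geq 2a\sqrt{b}$ with $a=|\dot{\tilde\theta}|$ and $b=1-\cos\tilde\theta$ to obtain
$$\varepsilon\mathcal{B}[\gamma]+\tfrac{1}{\varepsilon}(\mathcal{L}[\gamma]-\mathcal{L}[\bar{\gamma}])\geq 2\int_0^L|\dot{\tilde\theta}|\sqrt{1-\cos\tilde\theta}\,ds.$$
Setting $G(\theta):=2\sqrt{2}-2\sqrt{1+\cos\theta}$, which is Lipschitz and $2\pi$-periodic (use $1+\cos\theta=2\cos^2(\theta/2)$), one computes $|G'(\theta)|=\sqrt{1-\cos\theta}$ wherever $G$ is classically differentiable. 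Thus the integrand on the right is exactly $|(G\circ\tilde\theta)'|$ a.e., and the task reduces to choosing a splitting point so that the fundamental theorem $\int|f'|\geq|f(b)-f(a)|$ pieces together to the desired lower bound $\tfrac{1}{2}\mathcal{W}[\gamma]=G(\tilde\theta(0))+G(\tilde\theta(L))$.

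The splitting point will come from Rolle's theorem applied to the $y$-coordinate of $\gamma$: since $y(0)=y(L)=0$ and $y'=\sin\tilde\theta$, either $y\equiv 0$ (which forces $\gamma=\bar{\gamma}$ and makes the inequality trivial) or there exists $s_*\in(0,L)$ with $\sin\tilde\theta(s_*)=0$, i.e.\ $\tilde\theta(s_*)=k\pi$ for some $k\in\mathbb{Z}$. Splitting $[0,L]$ at $s_*$ then gives
$$2\int_0^L|\dot{\tilde\theta}|\sqrt{1-\cos\tilde\theta}\,ds\geq 2\bigl|G(k\pi)-G(\tilde\theta(0))\bigr|+2\bigl|G(\tilde\theta(L))-G(k\pi)\bigr|.$$
If $k$ is even then $G(k\pi)=0$ and the right-hand side equals $2G(\tilde\theta(0))+2G(\tilde\theta(L))=\mathcal{W}[\gamma]$, closing the argument immediately.

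The main obstacle I anticipate is the case $k$ odd, corresponding to $\gamma$ bending enough that its tangent momentarily reverses. There $G(k\pi)=2\sqrt{2}$, so the split only yields $8\sqrt{2}-2G(\tilde\theta(0))-2G(\tilde\theta(L))$, which exceeds $\mathcal{W}[\gamma]=2G(\tilde\theta(0))+2G(\tilde\theta(L))$ iff $G(\tilde\theta(0))+G(\tilde\theta(L))\leq 2\sqrt{2}$. This is precisely where the hypothesis $\theta_\gamma^0,\theta_\gamma^1\in[0,\pi/2)$ is indispensable: it ensures $\cos\tilde\theta(0),\cos\tilde\theta(L)>0$ and hence $G(\tilde\theta(0)),G(\tilde\theta(L))<G(\pi/2)=2\sqrt{2}-2$, so that $G(\tilde\theta(0))+G(\tilde\theta(L))<4\sqrt{2}-4<2\sqrt{2}$. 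Morally the odd case is far from tight --- once $\tilde\theta$ has swept through $\pi$ the bending contribution already vastly overspends the Young-type budget --- and the genuine sharp case of the proposition lives in the even case.
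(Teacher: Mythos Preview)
Your argument is correct and takes a genuinely different route from the paper's. The paper splits the proof into Lemma~\ref{redlem5} and Lemma~\ref{lblem1}: first it performs a somewhat technical ``folding'' operation on $\gamma$ (taking componentwise absolute values of the velocity and reflecting into a cone) so that the curve becomes the graph of a $W^{2,1}$-function $\hat u$, with the Rolle point $\gamma_2'(t_\gamma)=0$ becoming the point where $\hat u_x=0$; then it applies a Modica--Mortola inequality in the slope variable $v_x$ on each half, producing the primitive $f$ of \eqref{evenfunc}. You bypass the graph reduction entirely by working intrinsically in arc length with the tangent-angle lift $\tilde\theta$, so the Modica--Mortola step is carried out directly in the angle variable with the primitive $G(\theta)=2\sqrt{2}-2\sqrt{1+\cos\theta}$; these match via $f(\tan\theta)=2G(\theta)$. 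Your Rolle step plays the same role as the paper's $t_\gamma$, but because you never force $\gamma$ to be a graph you must additionally treat the ``odd $k$'' case where the tangent momentarily reverses --- this is exactly what the paper's folding in Lemma~\ref{redlem5} suppresses, and your numerical check $G(\theta_\gamma^0)+G(\theta_\gamma^1)<4\sqrt2-4<2\sqrt2$ disposes of it cleanly using the hypothesis $\theta_\gamma^0,\theta_\gamma^1\in[0,\pi/2)$. Your approach is shorter and more geometric; the paper's buys a reduction to a one-variable graph estimate (Lemma~\ref{lblem1}) that is reusable and perhaps more familiar to readers of classical Modica--Mortola arguments.
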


This follows by Lemma \ref{redlem5} and Lemma \ref{lblem1}.

\begin{lemma}\label{redlem5}
	Let $\varepsilon>0$.
	For any $W^{2,1}$-curve $\gamma$ with $\theta_\gamma^0,\theta_\gamma^1\in[0,\pi/2)$, there exist $L_0,L_1>0$ and a function $\hat{u}\in W^{2,1}(-L_0,L_1)$ such that $\hat{u}_x(0)=0$, $\hat{u}_x(-L_0)=\tan\theta_\gamma^0$, $\hat{u}_x(L_1)=\tan\theta_\gamma^1$ and
	\begin{align*}
		\varepsilon\mathcal{B}[\gamma]+\frac{1}{\varepsilon}(\mathcal{L}[\gamma]-\mathcal{L}[\bar{\gamma}]) \geq \int_{-L_0}^{L_1}\varepsilon \kappa_{\hat{u}}^2\sqrt{1+{\hat{u}_x}^2}+ \frac{1}{\varepsilon}\left(\sqrt{1+{\hat{u}_x}^2} - 1\right)dx.
	\end{align*}
\end{lemma}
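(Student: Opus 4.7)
The plan is to rigidly move $\gamma$ so its chord lies along the positive $x$-axis, parameterize by arc length with tangent angle $\theta(s)$, and then construct $\hat u$ as the graph whose arc-length tangent angle is a sign-adjusted version of $\theta$. In the generic regime this construction will actually produce equality of the two sides.

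After a translation and rotation (which preserve $\mathcal{L}$, $\mathcal{B}$, and all boundary warp angles), place $\gamma$ so that $\gamma(0) = (0,0)$ and $\gamma(L) = (\ell, 0)$, where $L := \mathcal{L}[\gamma]$ and $\ell := \mathcal{L}[\bar\gamma]$, and parameterize by arc length $s \in [0,L]$. Let $\theta \in W^{1,2}(0,L)$ be the continuously chosen tangent angle, so that $\gamma'(s) = (\cos\theta(s), \sin\theta(s))$. By the definition of the boundary warp angle, $|\theta(0)| = \theta_\gamma^0$ and $|\theta(L)| = \theta_\gamma^1$, and after a further reflection across the $x$-axis if necessary, one may assume $\theta(0) = \theta_\gamma^0 \geq 0$. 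The chord condition becomes $\int_0^L \cos\theta \, ds = \ell$ and $\int_0^L \sin\theta \, ds = 0$.

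Using the second identity, $\sin\theta$ cannot keep a single sign on $(0,L)$, so by continuity $\theta$ must reach a multiple of $\pi$ at some interior $s^* \in (0,L)$; in the natural non-winding regime $|\theta| < \pi$ this multiple is $0$. Choose $\eta \in \{\pm 1\}$ so that $\eta\,\theta(L) = \theta_\gamma^1 \geq 0$, and define
\[
\phi(\sigma) := \begin{cases} \theta(\sigma), & \sigma \in [0, s^*], \\ \eta\,\theta(\sigma), & \sigma \in [s^*, L], \end{cases}
\]
which is continuous at $s^*$ (since $\theta(s^*) = 0$) and belongs to $W^{1,2}(0,L)$ with $|\phi_\sigma| = |\theta_s|$ and $|\phi| = |\theta|$ a.e. In the generic regime $|\theta| < \pi/2$ one has $\cos\phi > 0$, so setting $x(\sigma) := \int_{s^*}^\sigma \cos\phi(\tau)\,d\tau$ and $y(\sigma) := \int_{s^*}^\sigma \sin\phi(\tau)\,d\tau$ yields the graph of a function $\hat u \in W^{2,1}(-L_0, L_1)$ with $L_0 := -x(0) > 0$ and $L_1 := x(L) > 0$. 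The boundary values $\hat u_x(0) = \tan\phi(s^*) = 0$, $\hat u_x(-L_0) = \tan\phi(0) = \tan\theta_\gamma^0$, and $\hat u_x(L_1) = \tan\phi(L) = \tan\theta_\gamma^1$ follow directly.

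Finally, $\int_0^L \sin\theta\,ds = 0$ gives $\mathcal{L}[\gamma] - \mathcal{L}[\bar\gamma] = L - \ell = \int_0^L (1-\cos\theta)\,ds$, and reparameterizing the right-hand side by arc length $\sigma$ yields
\[
\int_{-L_0}^{L_1} \kappa_{\hat u}^2 \sqrt{1+\hat u_x^2}\,dx = \int_0^L \phi_\sigma^2\,d\sigma = \mathcal{B}[\gamma],
\]
\[
\int_{-L_0}^{L_1} (\sqrt{1+\hat u_x^2} - 1)\,dx = \int_0^L (1-\cos\phi)\,d\sigma = \int_0^L (1-\cos\theta)\,ds,
\]
which together give equality of the two sides of the lemma. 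The main obstacle is the implicit hypothesis $|\theta| < \pi/2$: a general $W^{2,1}$-curve may have tangent angle wandering outside $(-\pi/2, \pi/2)$ or even winding past $\pm \pi$, so one must either localize to sub-arcs where $|\theta|$ is controlled or replace $\theta$ by a truncated angle that preserves the endpoint values $\theta_\gamma^0, \theta_\gamma^1$ while reducing both $\int \phi_\sigma^2$ and $\int (1-\cos\phi)$. Establishing this reduction is where the main technical work lies.
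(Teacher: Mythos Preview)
Your sketch is sound in the regime $|\theta|<\pi/2$ and your bookkeeping there is correct: in that case both sides of the asserted inequality are actually equal. You also correctly identify where the real difficulty lies. The problem is that you do not resolve it, and the specific fix you suggest does not work.

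Truncating the tangent angle at some level $\theta^*\in(\max\{\theta_\gamma^0,\theta_\gamma^1\},\pi/2)$ does decrease $\int\phi_\sigma^2$, but it need not decrease $\int(1-\cos\phi)$: if the original $\theta$ winds past $2\pi-\theta^*$ (which nothing in the hypotheses excludes), then on that portion $1-\cos\theta<1-\cos\theta^*$, so truncation \emph{increases} the second integral. Thus the ``truncate $\theta$'' proposal fails in general, and the ``localize to sub-arcs'' alternative is too vague to assess.

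The paper closes exactly this gap, and the device is worth knowing. After normalizing as you do, it picks $\theta^*\in(\max\{\theta_\gamma^0,\theta_\gamma^1,\pi/4\},\pi/2)$ and replaces the velocity by
\[
\dot{\hat\gamma}=\Bigl(\max\{|\gamma_1'|,\ \cos2\theta^*|\gamma_1'|+\sin2\theta^*|\gamma_2'|\},\ \min\{|\gamma_2'|,\ \sin2\theta^*|\gamma_1'|-\cos2\theta^*|\gamma_2'|\}\Bigr).
\]
In angle language this is a \emph{folding}, not a truncation: first reflect $\dot\gamma$ into the first quadrant, then reflect across the line of slope $\tan\theta^*$ whenever the angle exceeds $\theta^*$. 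Being a piecewise isometry of the tangent, it preserves $\mathcal L$ and $\mathcal B$ exactly and keeps the endpoint tangent directions (since $\theta_\gamma^0,\theta_\gamma^1<\theta^*$). Crucially $\hat\gamma_1'\geq|\gamma_1'|\geq\gamma_1'$ pointwise, so $\int\cos\hat\theta\geq\int\cos\theta=\mathcal L[\bar\gamma]$, i.e.\ the horizontal extent can only grow; this is what turns your equality into the required inequality. Since now $\dot{\hat\gamma}/|\dot{\hat\gamma}|$ lives in the cone $\{0\le\theta\le\theta^*\}$ with $\theta^*<\pi/2$, the curve is a graph and your remaining argument goes through verbatim. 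If you replace your sign-flip of $\theta$ on $[s^*,L]$ by this folding on all of $[0,L]$, your proof becomes complete.
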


\begin{proof}
	We can assume $\mathcal{B}[\gamma]<\infty$, $\gamma(0)=(0,0)$, $\gamma(1)=(\gamma_1(1),0)$ with $\gamma_1(1)>0$ without loss of generality.
	Note that
	$$\mathcal{L}[\bar{\gamma}]=\gamma_1(1),\quad\tan\theta_\gamma^0=|\gamma_2'(0)|/\gamma_1'(0),\quad\tan\theta_\gamma^1=|\gamma_2'(1)|/\gamma_1'(1),$$
	and there exists $0<t_\gamma<1$ such that $\gamma_2'(t_\gamma)=0$.
	Now we fix $\gamma:[0,1]\rightarrow\mathbb{R}^2$.\par
	Take an arbitrary angle $\theta_\gamma^{*}\in(\max\{\theta_\gamma^0,\theta_\gamma^1,\pi/4\},\pi/2)$ and for $t\in[0,1]$ define
	\begin{align*}
		\hat{\gamma}(t):=\int_0^t \left(
		\begin{array}{c}
			\max\{|\gamma_1'|(r),(\cos{2\theta_\gamma^{*}})|\gamma_1'|(r)+(\sin{2\theta_\gamma^{*}})|\gamma_2'|(r)\}\\
			\min\{|\gamma_2'|(r),(\sin{2\theta_\gamma^{*}})|\gamma_1'|(r)-(\cos{2\theta_\gamma^{*}})|\gamma_2'|(r)\}
		\end{array}
		\right)dr.
	\end{align*}
	Then $\hat{\gamma}$ is a $W^{2,1}$-curve such that $\hat{\gamma}$ is in the first quadrant of $\mathbb{R}^2$,
	$$\dot{\hat{\gamma}}/|\dot{\hat{\gamma}}|\in\{(\cos{\theta},\sin{\theta})\in\mathbb{S}^1|\ 0\leq\theta\leq\theta_\gamma^{*}\}\quad\text{in }[0,1],$$
	and the following conditions hold:
	$$\mathcal{B}[\hat{\gamma}]=\mathcal{B}[\gamma],\quad\mathcal{L}[\hat{\gamma}]=\mathcal{L}[\gamma],\quad\hat{\gamma}(0)=(0,0),\quad\hat{\gamma}_1(1)\geq\gamma_1(1),\quad\hat{\gamma}_2'(t_\gamma)=0,$$
	$$\hat{\gamma}_2'(0)/\hat{\gamma}'_1(0)=|\gamma_2'(0)|/\gamma_1'(0),\quad\hat{\gamma}_2'(1)/\hat{\gamma}'_1(1)=|\gamma_2'(1)|/\gamma_1'(1).$$
	These conditions hold since we only used translation or reflection partially in this transformation, and while
	$$\dot{\gamma}/|\dot{\gamma}|\in\{(\cos{\theta},\sin{\theta})\in\mathbb{S}^1|\ |\theta|\leq\theta_\gamma^*\}$$
	only translation or one reflection $(\gamma_1,\gamma_2)\to(\gamma_1,-\gamma_2)$.
	In addition, since $\hat{\gamma}_1'>0$ in $[0,1]$, we can define the inverse function of $\hat{\gamma}_1$ and it is in $W^{2,1}(0,\hat{\gamma}_1(1))$. Thus, by taking
	$$\hat{u}(x):=\hat{\gamma}_2\circ\hat{\gamma}_1^{-1}(x+\hat{\gamma}_1(t_\gamma)),\quad L_0:=\hat{\gamma}_1(t_\gamma), \quad L_1:=\hat{\gamma}_1(1)-\hat{\gamma}_1(t_\gamma),$$
	we obtain a desired function.
	Indeed, we have
	$$\hat{u}_x(0)=(\hat{\gamma}_2\circ\hat{\gamma}_1^{-1})'(\hat{\gamma}_1(t_\gamma))=\hat{\gamma}'_2(t_\gamma)/\hat{\gamma}'_1(t_\gamma)=0,$$
	$$\hat{u}_x(-L_0)=(\hat{\gamma}_2\circ\hat{\gamma}_1^{-1})'(0)=\hat{\gamma}'_2(0)/\hat{\gamma}'_1(0)=|\gamma_2'(0)|/\gamma_1'(0)=\tan{\theta_\gamma^0}$$
	and $\hat{u}_x(L_1)=\tan{\theta_\gamma^1}$ similarly and the desired inequality follows by
	$$\mathcal{B}[\gamma]=\mathcal{B}[\hat{\gamma}]=\int\kappa_{\hat{u}}^2\sqrt{1+{\hat{u}_x}^2},\quad\mathcal{L}[\gamma]=\mathcal{L}[\hat{\gamma}]=\int\sqrt{1+{\hat{u}_x}^2},$$
	$$\int_{-L_0}^{L_1}dx=L_0+L_1=\hat{\gamma}_1(1)\geq\gamma_1(1)=\mathcal{L}[\bar{\gamma}].$$
	The proof is completed.
\end{proof}

\begin{lemma}\label{lblem1}
	Let $v\in W^{2,1}(0,L)$ and $\theta\in[0,\pi/2)$ satisfying $v_x(0)=0$ and $|v_x(L)|=\tan\theta$.
	Then the inequality holds:
	$$\int_{0}^{L} \varepsilon\kappa_v^2\sqrt{1+{v_x}^2} + \frac{1}{\varepsilon}\left(\sqrt{1+{v_x}^2} - 1\right)dx\geq4\left(\sqrt{2}-\sqrt{1+\cos{\theta}}\right).$$
\end{lemma}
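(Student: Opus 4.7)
The proof will follow the classical Modica-Mortola pattern. The first step is to apply the elementary inequality $\varepsilon a + \varepsilon^{-1} b \geq 2\sqrt{ab}$ pointwise with $a = \kappa_v^2\sqrt{1+v_x^2}$ and $b=\sqrt{1+v_x^2}-1$, yielding the $\varepsilon$-independent lower bound
\[
\varepsilon\kappa_v^2\sqrt{1+v_x^2} + \frac{1}{\varepsilon}\bigl(\sqrt{1+v_x^2}-1\bigr) \geq 2|\kappa_v|(1+v_x^2)^{1/4}\sqrt{\sqrt{1+v_x^2}-1}.
\]
The virtue of this step is that all $\varepsilon$-dependence vanishes; it remains to integrate the right-hand side and identify the resulting constant.

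Next, I would change variables to the tangent angle $\theta_v(x) := \arctan v_x(x)$. The identities $|\kappa_v|\sqrt{1+v_x^2} = |\theta_v'|$ and $\sqrt{1+v_x^2} = \sec\theta_v$ convert the right-hand side above into
\[
2\sqrt{1-\cos\theta_v(x)}\,|\theta_v'(x)|,
\]
which is precisely the density $2\sqrt{W(\theta_v)}\,|\theta_v'|$ associated with the Modica-Mortola potential $W(\tau) := 1-\cos\tau$. Because $v\in W^{2,1}(0,L)$, the function $\theta_v$ is absolutely continuous and $\theta_v\in[-\pi/2,\pi/2]$, so this substitution is rigorously justified.

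Finally, setting $G(\tau) := \int_0^\tau \sqrt{1-\cos s}\,ds$ and using that $G$ is nondecreasing on $[0,\pi/2]$ together with $\bigl|\int_0^L (G\circ\theta_v)'(x)\,dx\bigr|\leq \int_0^L \sqrt{1-\cos\theta_v(x)}\,|\theta_v'(x)|\,dx$, the boundary conditions $\theta_v(0)=0$ and $|\theta_v(L)|=\theta$ give
\[
\int_0^L 2\sqrt{1-\cos\theta_v}\,|\theta_v'|\,dx \;\geq\; 2\bigl|G(\theta_v(L))-G(\theta_v(0))\bigr| \;=\; 2\,G(\theta).
\]
Using $\sqrt{1-\cos s} = \sqrt{2}\sin(s/2)$ for $s\in[0,\pi/2]$, a direct computation gives $2G(\theta) = 4\sqrt{2}\bigl(1-\cos(\theta/2)\bigr)$, and the half-angle identity $\sqrt{1+\cos\theta} = \sqrt{2}\cos(\theta/2)$ (valid since $\theta/2\in[0,\pi/4)$) rewrites this as $4(\sqrt{2}-\sqrt{1+\cos\theta})$, as desired. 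No step is genuinely difficult; the main subtlety is the handling of the absolute value of $\theta_v'$, but this is automatic because $W$ is nonnegative and so $\int |(G\circ\theta_v)'|\geq |\int(G\circ\theta_v)'|$.
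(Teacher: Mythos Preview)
Your proof is correct and follows essentially the same Modica--Mortola strategy as the paper: apply the AM--GM inequality $\varepsilon a+\varepsilon^{-1}b\ge 2\sqrt{ab}$ pointwise, recognise the resulting integrand as the derivative of a primitive, and evaluate using the boundary data. The only cosmetic difference is that the paper keeps $v_x$ as the running variable and defines the primitive $f(y)=\int_0^y 2(1+z^2)^{-5/4}\sqrt{\sqrt{1+z^2}-1}\,dz$, evaluating $f(\tan\theta)$ at the end via the substitution $z=\tan\varphi$; you make this substitution at the outset by passing to the tangent angle $\theta_v=\arctan v_x$, which makes the integrand $2\sqrt{1-\cos\theta_v}\,|\theta_v'|$ and the final computation slightly cleaner.
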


\begin{proof}
	For such $v\in W^{2,1}(0,L)$, we have
	\begin{align*}
		&\int_{0}^{L} \varepsilon{\left(\frac{v_{xx}}{(1+v_x^2)^{3/2}}\right)}^2\sqrt{1+{v_x}^2} + \frac{1}{\varepsilon} \left(\sqrt{1+{v_x}^2} - 1\right)dx \\
		&\geq\int_0^L 2\frac{|v_{xx}|}{(1+v_x^2)^{5/4}}\sqrt{\sqrt{1+{v_x}^2} - 1}\ dx\\
		&\geq \left|\int_0^Lf'(v_x)v_{xx}\ dx\right| = |f(v_x(L))-f(v_x(0))| = |f(v_x(L))|,
	\end{align*}
	where $f$ is the odd function given by
	\begin{eqnarray}\label{evenfunc}
		f(y):=\int_0^y \frac{2\sqrt{\sqrt{1+{z}^2} - 1}}{(1+z^2)^{5/4}}\ dz.
	\end{eqnarray}
	The first inequality follows by the easy trick $\varepsilon X^2 + \varepsilon^{-1}Y^2 \geq 2XY$.
	Moreover, we have $|f(v_x(L))|=f(|v_x(L)|)=f(\tan\theta)$ and
	\begin{align*}
		f(\tan{\theta}) &= 2\int_0^{\tan{\theta}} \frac{\sqrt{\sqrt{1+{z}^2} - 1}}{(1+z^2)^{5/4}}\ dz\\
		&= 2\int_0^{\theta} \sqrt{1-\cos{\varphi}}\ d\varphi\quad(z=\tan{\varphi}) \\
		&= 2\int_{\cos{\theta}}^1 \frac{dw}{\sqrt{1+w}}\quad(w=\cos{\varphi}) \\
		&= 4\left(\sqrt{2}-\sqrt{1+\cos{\theta}}\right),
	\end{align*}
	thus the proof is completed.
\end{proof}

\begin{proof}[Proposition \ref{lbprop3}]
	Let $\gamma$ be any $W^{2,1}$-curve and $\hat{u}$ be the function obtained by Lemma \ref{redlem5}.
	Using Lemma \ref{lblem1} for the $W^{2,1}$-functions $\hat{u}(-x)$ on $[0,L_0]$ and $\hat{u}(x)$ on $[0,L_1]$, then we have
	$$\varepsilon\mathcal{B}[\gamma]+\frac{1}{\varepsilon}(\mathcal{L}[\gamma]-\mathcal{L}[\bar{\gamma}])\geq4\left(\sqrt{2}-\sqrt{1+\cos\theta_\gamma^0}\right)+4\left(\sqrt{2}-\sqrt{1+\cos\theta_\gamma^1}\right)$$
	and the right-hand term is nothing but $\mathcal{W}[\gamma]$.
\end{proof}

\subsection{Completion of the proof of the liminf condition}\label{liminfsub4}
We shall prove the liminf condition of Theorem \ref{gammathm}.
\begin{proof}
	Take any sequence $u^{\varepsilon}\to u$ in $X^{1,1}(a,b)$ and fix it.
	We can assume Assumption \ref{assumpseq} by Proposition \ref{restseq} (thus $u=\bar{u}$ is $[\bar{x}_0;\ldots;\bar{x}_{N+1}]$-regular).
	Then for any sufficiently small $\delta>0$, we have
	$$\liminf_{\varepsilon\to 0}F_{\varepsilon}[u^{\varepsilon}]\geq\sum_{i=1}^N 4\left(\sqrt{2}-\sqrt{1+\cos{\theta_{i,\delta}}}\right)$$
	by Proposition \ref{lbprop1}, \ref{lbprop2} and \ref{lbprop3}, where $\theta_{i,\delta}\in(0,\pi/2)$ are angles satisfying $\theta_{i,\delta}\rightarrow\theta_i$ as $\delta\downarrow0$ and $\theta_i$ is the contact angle of $\bar{u}$ with $\psi$ at $\bar{x}_i$.
	Thus we get the consequence by taking $\delta\downarrow 0$ since $\cos\theta_i=\alpha(\bar{x}_i)$ and (\ref{repsinglim}) hold.
\end{proof}

\section{Lim-sup condition of $\Gamma$-convergence} \label{limsupsec}

Finally, we prove the limsup condition of Theorem \ref{gammathm}.
\begin{proof}
	We construct sequences concretely by modifying singularities of minimizers of $E_0$.
	To this end, for arbitrary $\varepsilon>0$ and $0<\theta<\pi/2$ we consider the following ODE:
	\begin{eqnarray}\label{odeeps}
		\begin{cases}
			U''=\displaystyle\frac{1}{\varepsilon}(1+(U')^2)^{5/4}\sqrt{\sqrt{1+(U')^2}-1}\quad\text{in }(-\infty,0],\\
			U(0)=0,\ U'(0)=\tan{\theta}>0.
		\end{cases}
	\end{eqnarray}
	This Cauchy problem has the unique solution $U_{\theta,\varepsilon}:(-\infty,0]\rightarrow\mathbb{R}$ satisfying $U_{\theta,\varepsilon}<0$, $U_{\theta,\varepsilon}'>0$ and $U_{\theta,\varepsilon}''>0$ in $(-\infty,0)$.
	Moreover, $U_{\theta,\varepsilon}(x)=\varepsilon U_{\theta,1}(x/\varepsilon)$ for $x\in(-\infty,0)$, $\lim_{x\to-\infty}U_{\theta,1}'(x)=0$ and by definition we see
	\begin{align*}
		& \varepsilon\int_{-\varepsilon^{2/3}}^{0} {\left(\frac{U_{\theta,\varepsilon}''}{(1+(U_{\theta,\varepsilon}')^2)^{3/2}}\right)}^2\sqrt{1+(U_{\theta,\varepsilon}')^2}\ dx\\
		& +\frac{1}{\varepsilon}\int_{-\varepsilon^{2/3}}^{0} \left(\sqrt{1+(U_{\theta,\varepsilon}')^2} - 1\right)dx\\
		=& \int_{-\varepsilon^{-1/3}}^{0} {\left(\frac{U_{\theta,1}''}{(1+(U_{\theta,1}')^2)^{3/2}}\right)}^2\sqrt{1+(U_{\theta,1}')^2}\ dx\\
		& +\int_{-\varepsilon^{-1/3}}^{0} \left(\sqrt{1+(U_{\theta,1}')^2} - 1\right)dx\\
		=& \int_{-\varepsilon^{-1/3}}^{0} 2\frac{U_{\theta,1}''}{(1+(U_{\theta,1}')^2)^{5/4}}\sqrt{\sqrt{1+(U_{\theta,1}')^2} - 1}\ dx\\
		=& f(U_{\theta,1}'(0))-f(U_{\theta,1}'(-\varepsilon^{-1/3})) \xrightarrow{\varepsilon\to0} f(\tan{\theta})=4\left(\sqrt{2}-\sqrt{1+\cos{\theta}}\right),
	\end{align*}
	where $f$ is defined in (\ref{evenfunc}).\par
	Considering the above, for any two points in $\mathbb{R}^2$ and angles $\theta,\theta'\in(0,\pi/2)$, we can take a sequence of $W^{2,1}$-curves $\{\gamma^\varepsilon_{\theta,\theta'}\}_{\varepsilon>0}$ connecting the two points such that the boundary warp angles are $\theta$, $\theta'$ and the (rescaled) total energy of the curves converges to the boundary warp energy of them, that is,
	\begin{align*}
		\lim_{\varepsilon\to0}\left(\varepsilon\mathcal{B}[\gamma^\varepsilon_{\theta,\theta'}]+\frac{1}{\varepsilon}(\mathcal{L}[\gamma^\varepsilon_{\theta,\theta'}]-\mathcal{L}[\bar{\gamma}^\varepsilon_{\theta,\theta'}])\right)\\
		=4\left(2\sqrt{2}-\sqrt{1+\cos{\theta}}-\sqrt{1+\cos{\theta'}}\right).
	\end{align*}
	This is achieved, for example, by rotating the graph of a $W^{2,1}$-function $V_{\varepsilon,\theta,\theta'}$ on some interval $[A,B]$ defined by
	\begin{align*}
		V_{\varepsilon,\theta,\theta'}(x):=
		\begin{cases}
			U_{\theta,\varepsilon}(A-x)\quad x\in[A,A+\varepsilon^{2/3}],\\
			\text{suitable function}\quad\text{in }(A+\varepsilon^{2/3},B-\varepsilon^{2/3}),\\
			U_{\theta',\varepsilon}(x-B)\quad x\in[B-\varepsilon^{2/3},B].
		\end{cases}
	\end{align*}
	In $[A,A+\varepsilon^{2/3}]\cup[B-\varepsilon^{2/3},B]$ the total energy converges to
	$$4\left(2\sqrt{2}-\sqrt{1+\cos{\theta}}-\sqrt{1+\cos{\theta'}}\right)$$
	as $\varepsilon\downarrow0$.
	In $(A+\varepsilon^{2/3},B-\varepsilon^{2/3})$, for example we use two arcs of the circle with radius $\varepsilon$ and central angles $\theta_\varepsilon$, $\theta'_\varepsilon$ tending to zero as $\varepsilon\downarrow0$ and a segment suitably (as Figure \ref{upperfig}).
	We decompose $(A+\varepsilon^{2/3},B-\varepsilon^{2/3})$ into the circular parts $I_c^\varepsilon\subset[A,B]$ and the segment part $I_s^\varepsilon\subset[A,B]$.
	Then we see that the total energy of the circular parts tends to be zero by
	\begin{align*}
		\varepsilon\int_{I_c^\varepsilon}\kappa^2 ds+\frac{1}{\varepsilon}\left(\int_{I_c^\varepsilon}ds-|{I_c^\varepsilon}|\right)&=\left(\varepsilon\cdot\frac{1}{\varepsilon^2}+\frac{1}{\varepsilon}\right)\int_{I_c^\varepsilon}ds-\frac{|I_c^\varepsilon|}{\varepsilon}\\
		&\approx O(\theta_\varepsilon)+O(\theta'_\varepsilon)\xrightarrow{\varepsilon\to0}0.
	\end{align*}
	The segment part also tends to be zero by
	\begin{align*}
		\varepsilon\int_{I_s^\varepsilon}\kappa^2 ds+\frac{1}{\varepsilon}\left(\int_{I_s^\varepsilon}ds-|I_s^\varepsilon|\right)&=\frac{1}{\varepsilon}\left(\int_{I_s^\varepsilon} ds-|I_s^\varepsilon|\right)\\
		&\leq\frac{\sqrt{|I_s^\varepsilon|^2+(O(\varepsilon^{2/3}))^2}-|I_s^\varepsilon|}{\varepsilon}\\
		&\approx O(\varepsilon^{1/3})\xrightarrow{\varepsilon\to0}0,
	\end{align*}
	since $U_{\theta,\varepsilon}(-\varepsilon^{2/3})\approx U_{\theta',\varepsilon}(-\varepsilon^{2/3})\approx O(\varepsilon^{2/3})$ as $\varepsilon\downarrow0$.\par
	Therefore, for any $\bar{u}\in\text{argmin}_{X^{1,1}}E_0$ which is $[\bar{x}_0,\ldots,\bar{x}_{N+1}]$-regular, by modifying it as above in $(\bar{x}_i,\bar{x}_{i+1})\subset(a,b)_+^{\bar{u}}$, we can take $\{u^\varepsilon\}_\varepsilon\subset X^{2,1}$ such that $u^\varepsilon\rightarrow\bar{u}$ in $X^{1,1}$ and
	$$\lim_{\varepsilon\to0} F_\varepsilon[u^\varepsilon]=\sum_{i=1}^N 4\left(\sqrt{2}-\sqrt{1+\cos\theta_i}\right)=\sum_{i=1}^N 4\left(\sqrt{2}-\sqrt{1+\alpha(\bar{x}_i)}\right),$$
	where $\theta_i$ is the contact angle of $\bar{u}$ with $\psi$ at $\bar{x}_i$.
	It is not trivial that the obtained functions $u^\varepsilon$ belong to $X^{2,1}$ thus we should be carefully.
	Note that the curves obtained by the modification can be represented as the graph of a $W^{2,1}$-function if $\varepsilon>0$ is sufficiently small.
	Thus $\{u^\varepsilon\}\subset W^{2,1}$.
	In addition, the second derivative of the modified parts of $u^\varepsilon$ goes to infinity as $\varepsilon\downarrow0$ near the free boundary of $\bar{u}$, thus we see $u^\varepsilon\geq\psi$ for any sufficiently small $\varepsilon>0$.
	This implies $\{u^\varepsilon\}\subset X^{2,1}$, and the proof is completed.
\end{proof}

\begin{figure}[htbp]
	\begin{center}
		\def\svgwidth{120mm}
		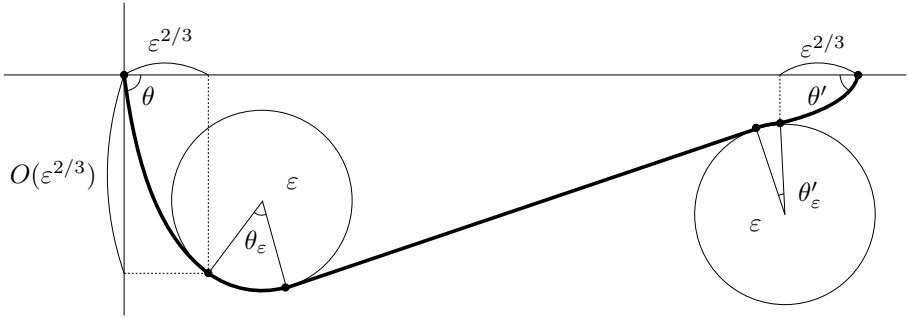
		\caption{construction of $V_{\varepsilon,\theta,\theta'}$}
		\label{upperfig}
	\end{center}
\end{figure}

\renewcommand{\thesection}{A}
\section{Appendix}\label{appendix}

\begin{lemma}[Change of variables]\label{cvlem1}
	Let $F$ be an $L^1$-integrable or non-negative measurable function on $[a,b]$ and $\Phi$ be a $C^1$-diffeomorphism from $[A,B]$ to $[a,b]$. Then 
	\begin{align*}
		\int_{a}^{b} F(x)\ dx=\int_{A}^{B} F(\Phi(y))|\Phi'(y)|\ dy. \label{chanvar1}
	\end{align*}
\end{lemma}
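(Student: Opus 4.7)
The plan is to reduce the general measurable case to the case of continuous $F$ by a standard measure-theoretic extension.

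First I would handle the case of continuous $F$. Since $\Phi$ is a $C^1$-diffeomorphism of the connected interval $[A,B]$ onto $[a,b]$, the derivative $\Phi'$ has constant sign. In the case $\Phi'>0$ one has $\Phi(A)=a$ and $\Phi(B)=b$; defining $G(x):=\int_a^x F(t)\,dt$, the fundamental theorem of calculus together with the chain rule yields
\[
\int_a^b F(x)\,dx = G(b)-G(a) = (G\circ\Phi)(B)-(G\circ\Phi)(A) = \int_A^B F(\Phi(y))\Phi'(y)\,dy,
\]
which coincides with the claimed right-hand side because $|\Phi'|=\Phi'$. The case $\Phi'<0$ is analogous: the sign change is compensated by the swap of endpoints $\Phi(A)=b$, $\Phi(B)=a$.

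Next I would extend the identity to indicator functions. Define the set function
\[
\mu(E):=\int_{\Phi^{-1}(E)}|\Phi'(y)|\,dy
\]
on Borel subsets $E\subset[a,b]$. Continuity of $\Phi'$ and monotone convergence show $\mu$ is a finite Borel measure, and the continuous case (applied to approximants of $\chi_I$ for subintervals $I$) shows $\mu(I)=|I|$ for every subinterval. Uniqueness of measures agreeing on the $\pi$-system of subintervals forces $\mu$ to coincide with Lebesgue measure on all Borel sets, giving the formula for $F=\chi_E$ with $E$ Borel. Since $\Phi$ is Lipschitz on the compact $[A,B]$, it sends Lebesgue-null sets to Lebesgue-null sets, so the conclusion extends to all Lebesgue-measurable $E$. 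Linearity of the integral then promotes this to simple functions, monotone convergence to non-negative measurable $F$, and the decomposition $F=F^+-F^-$ to the $L^1$ case.

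The only genuinely delicate point I foresee is the measurability bookkeeping in the last step, namely ensuring that $F\circ\Phi$ remains Lebesgue-measurable when $F$ is only Lebesgue-measurable; this is precisely secured by the null-set preservation property of the Lipschitz map $\Phi$ noted above, so no further work is needed beyond that observation.
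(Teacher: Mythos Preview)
Your argument is correct and self-contained. One small slip worth flagging: in the extension step you invoke that $\Phi$ is Lipschitz and hence sends null sets to null sets, but what is actually needed---both for extending the identity $\mu(E)=|E|$ from Borel to Lebesgue sets and for the Lebesgue-measurability of $F\circ\Phi$---is that $\Phi^{-1}$ sends null sets to null sets. This is equally true, since $\Phi^{-1}$ is also $C^1$ on a compact interval and hence Lipschitz, so the argument goes through unchanged once you swap $\Phi$ for $\Phi^{-1}$ in that sentence.

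The paper's own proof takes a very different and much shorter route: it simply cites the general change-of-variables theorem from Evans--Gariepy for the $L^1$ case, and then disposes of the remaining non-negative case by observing that if $\|F\|_{L^1}=\infty$ then the right-hand side must also be infinite because $\Phi$ is a $C^1$-diffeomorphism (so neither $|\Phi'|$ nor its reciprocal can blow up or vanish). Your approach buys self-containedness and transparency, building the result from the fundamental theorem of calculus and a $\pi$-system uniqueness argument; the paper's approach buys brevity by delegating the substantive work to a standard reference and only supplying the one-line reduction from the non-negative case to the integrable case.
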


\begin{proof}
	The case that $\|F\|_{L^1}$ is finite follows by \cite[\S 3.3.3, Theorem 2]{EvGa92}.
	If $\|F\|_{L^1}$ is infinite then so is the right hand term since $\Phi$ is a $C^1$-diffeomorphism.
\end{proof}

\begin{lemma}\label{cvlem2}
	Let $u\in W^{1,1}(a,b)$ and $\Phi$ be a $C^1$-diffeomorphism from $[A,B]$ to $[a,b]$.
	Then $u\circ\Phi\in W^{1,1}(A,B)$ and
	$$(u\circ\Phi)'=(u'\circ\Phi)\Phi'.$$
\end{lemma}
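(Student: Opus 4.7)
The plan is to prove the formula for smooth $u$ by the classical chain rule and then use density of smooth functions in $W^{1,1}(a,b)$ together with the change-of-variables formula just established in Lemma \ref{cvlem1} to pass to the limit. Since $\Phi:[A,B]\to[a,b]$ is a $C^1$-diffeomorphism of compact intervals, both $\Phi'$ and $(\Phi^{-1})'$ are continuous and bounded away from zero and infinity, which keeps all change-of-variables computations uniform.

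First, choose a sequence $u_n\in C^\infty([a,b])$ with $u_n\to u$ in $W^{1,1}(a,b)$; such a sequence exists by the standard density of smooth functions in $W^{1,1}$ on an interval. For each $n$ the classical chain rule yields $(u_n\circ\Phi)'(y)=u_n'(\Phi(y))\Phi'(y)$ pointwise on $[A,B]$, so in particular $u_n\circ\Phi\in W^{1,1}(A,B)$ with this derivative.

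Next, I would verify the two $L^1$-convergences needed to pass to the limit. Applying Lemma \ref{cvlem1} with $F=|u_n-u|$ and with $F=|u_n'-u'|$, and using boundedness of $|(\Phi^{-1})'|$ and $|\Phi'|$, gives
\begin{align*}
\int_A^B |u_n\circ\Phi-u\circ\Phi|\,dy &= \int_a^b |u_n-u|\,|(\Phi^{-1})'|\,dx \;\longrightarrow\; 0,\\
\int_A^B |(u_n'\circ\Phi)\Phi'-(u'\circ\Phi)\Phi'|\,dy &= \int_a^b |u_n'-u'|\,dx \;\longrightarrow\; 0.
\end{align*}
Thus $u_n\circ\Phi\to u\circ\Phi$ and $(u_n\circ\Phi)'\to (u'\circ\Phi)\Phi'$ in $L^1(A,B)$.

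Finally, I would test against an arbitrary $\varphi\in C_c^\infty(A,B)$: the identity $\int (u_n\circ\Phi)\varphi'\,dy = -\int (u_n'\circ\Phi)\Phi'\varphi\,dy$ passes to the limit using the two $L^1$-convergences above, producing $\int (u\circ\Phi)\varphi'\,dy=-\int (u'\circ\Phi)\Phi'\varphi\,dy$. This says exactly that $u\circ\Phi\in W^{1,1}(A,B)$ with weak derivative $(u'\circ\Phi)\Phi'$. The only mildly delicate step is ensuring the $L^1$-convergences are genuinely uniform in $n$, but this is immediate from the $C^1$-diffeomorphism hypothesis on a compact interval; no singularity or unbounded Jacobian can appear, so the argument goes through without obstruction.
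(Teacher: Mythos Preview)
Your argument is correct: approximation by smooth functions, the classical chain rule, and passage to the limit via the change-of-variables formula of Lemma~\ref{cvlem1} give exactly the conclusion, and your bounds on $|\Phi'|$ and $|(\Phi^{-1})'|$ are justified by compactness.

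The paper's proof is the single phrase ``By Lemma~\ref{cvlem1},'' which most naturally points to a more direct route: test the weak-derivative identity against $\varphi\in C_c^\infty(A,B)$ and apply the change of variables $x=\Phi(y)$ twice, using that $\varphi\circ\Phi^{-1}\in C_c^1(a,b)$ is an admissible test function for $u$. That avoids the approximation step entirely. Your density argument is equally valid and perhaps more transparent pedagogically; the direct argument is shorter and shows that nothing beyond Lemma~\ref{cvlem1} and the definition of weak derivative is needed. Both approaches rest on the same change-of-variables lemma, so the difference is one of packaging rather than substance.
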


\begin{proof}
	By Lemma \ref{cvlem1}.
\end{proof}

The following is nothing but \cite[Corollary 8.11]{Br11}.

\begin{lemma}\label{cvlem3}
	Let $u\in W^{1,1}(a,b)$ and $G\in C^1(\mathbb{R})$. Then $G\circ u \in W^{1,1}(a,b)$ and
	$$(G\circ u)'=(G'\circ u)u'.$$
\end{lemma}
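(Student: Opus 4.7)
The plan is to reduce to the classical chain rule for Sobolev (equivalently, absolutely continuous) functions on a bounded interval. The first observation is that in one dimension we have the continuous embedding $W^{1,1}(a,b)\hookrightarrow C([a,b])$, so after passing to the continuous representative we may assume $u$ is absolutely continuous on $[a,b]$ and bounded, say $\|u\|_\infty\leq M$. Since $G\in C^1(\mathbb{R})$, the restriction of $G'$ to the compact interval $[-M,M]$ is bounded and uniformly continuous, and in particular $G$ is Lipschitz on $[-M,M]$.

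The next step is to establish that $G\circ u$ is absolutely continuous. This is an immediate consequence of the Lipschitz bound: for any finite collection of disjoint subintervals of $[a,b]$ with total length small, the variation of $G\circ u$ on that collection is bounded by $(\sup_{[-M,M]}|G'|)$ times the variation of $u$, which is small by the absolute continuity of $u$. Hence $G\circ u$ is absolutely continuous on $[a,b]$, so it lies in $W^{1,1}(a,b)$ and its weak derivative equals its classical derivative almost everywhere.

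For the chain rule formula, I would first argue pointwise: at every $x\in(a,b)$ where $u$ is differentiable in the classical sense (which is a.e.\ in $(a,b)$ by absolute continuity), the classical chain rule for $G\in C^1$ at the point $u(x)$ gives $(G\circ u)'(x)=G'(u(x))u'(x)$. Combined with the absolute continuity of $G\circ u$ established above, this yields the identity for the weak derivative. Alternatively, one can take $u_n\in C^\infty([a,b])$ with $u_n\to u$ in $W^{1,1}(a,b)$ and uniformly (via Lemma \ref{cvlem1}-type density arguments or a mollification in 1D), apply the classical chain rule to $u_n$, and pass to the limit using the uniform convergence of $u_n$ together with the uniform continuity of $G'$ on the compact range.

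The main obstacle is really a book-keeping one: confirming that no hidden integrability issue enters when $G$ has no global Lipschitz bound on all of $\mathbb{R}$. This is why the reduction to a compact range of $u$ via the 1D Sobolev embedding is essential; once that is in hand, all subsequent steps are standard and amount to the cited Corollary 8.11 in \cite{Br11}.
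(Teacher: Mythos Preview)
Your argument is correct. The paper does not actually give a proof of this lemma: it simply states that the result is nothing but \cite[Corollary 8.11]{Br11} and cites Brezis directly. Your route via the one-dimensional embedding $W^{1,1}(a,b)\hookrightarrow C([a,b])$, boundedness of $u$, the resulting Lipschitz bound on $G$ over the compact range, and absolute continuity is the standard self-contained proof in one dimension and is entirely sound; the alternative mollification argument you sketch is essentially the approach Brezis uses in the general $W^{1,p}$ setting. Either way, there is no gap.
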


The above lemmas lead to Lemma \ref{cvlem4}, further Lemma \ref{cvlem5} and \ref{cvlem6}.

\begin{lemma}\label{cvlem4}
	Let $u\in W^{2,1}(a,b)$, $\Phi\in W^{2,1}(A,B)$, $|\Phi'|>0$ in $[A,B]$, $\Phi([A,B])=[a,b]$ and $\Psi$ be the inverse function of $\Phi$. Then $u\circ\Phi\in W^{2,1}(A,B)$ and $\Psi\in W^{2,1}(a,b)$.
	Moreover,
	$$(u\circ\Phi)'=(u'\circ\Phi)\Phi',\ (u\circ\Phi)''=(u''\circ\Phi)(\Phi')^2+(u'\circ\Phi)\Phi'',$$
	\begin{align*}
		\Psi'=\frac{1}{\Phi'\circ\Psi},\ \Psi''=-\frac{\Phi''\circ\Psi}{(\Phi'\circ\Psi)^3}.
	\end{align*}
\end{lemma}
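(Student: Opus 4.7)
My plan is to bootstrap from the three preceding lemmas (Lemmas \ref{cvlem1}, \ref{cvlem2}, \ref{cvlem3}) and combine them with the fact that $W^{2,1}(a,b)\hookrightarrow C^1([a,b])$ in one dimension, so that all compositions and inverses are classically well-defined and it remains only to verify $W^{1,1}$-regularity of derivatives.

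\medskip

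\textbf{Step 1 (the composition $u\circ\Phi$).} I would first apply Lemma \ref{cvlem2} directly to get $u\circ\Phi\in W^{1,1}(A,B)$ with
\[
(u\circ\Phi)' = (u'\circ\Phi)\Phi'.
\]
To upgrade this to $W^{2,1}$, I need to show the right-hand side is itself in $W^{1,1}(A,B)$. Since $u'\in W^{1,1}(a,b)$ (as $u\in W^{2,1}(a,b)$), Lemma \ref{cvlem2} applied to $u'$ gives $u'\circ\Phi\in W^{1,1}(A,B)$ with derivative $(u''\circ\Phi)\Phi'$. Also $\Phi'\in W^{1,1}(A,B)$ by hypothesis. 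In one dimension both factors are continuous on $[A,B]$ hence bounded; an approximation argument (smoothing both factors, applying the classical Leibniz rule, and passing to the limit in $L^1$) yields $(u'\circ\Phi)\Phi'\in W^{1,1}(A,B)$ with
\[
\big((u'\circ\Phi)\Phi'\big)' = (u''\circ\Phi)(\Phi')^2 + (u'\circ\Phi)\Phi'',
\]
which is the desired formula for $(u\circ\Phi)''$.

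\medskip

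\textbf{Step 2 (the inverse $\Psi$).} Since $\Phi\in W^{2,1}(A,B)\subset C^1([A,B])$ and $|\Phi'|>0$ on the compact set $[A,B]$, there exists $c>0$ with $|\Phi'|\ge c$, so by the classical inverse function theorem $\Psi\in C^1([a,b])$ with $\Psi'=1/(\Phi'\circ\Psi)$. To obtain $\Psi\in W^{2,1}(a,b)$ I must show $\Psi'\in W^{1,1}(a,b)$. Applying Lemma \ref{cvlem2} with the $C^1$-diffeomorphism $\Psi$ to the function $\Phi'\in W^{1,1}(A,B)$ gives $\Phi'\circ\Psi\in W^{1,1}(a,b)$ with
\[
(\Phi'\circ\Psi)' = (\Phi''\circ\Psi)\Psi'.
\]
Since $|\Phi'\circ\Psi|\ge c>0$, I can choose $G\in C^1(\mathbb{R})$ with $G(t)=1/t$ on a neighborhood of the range of $\Phi'\circ\Psi$ and invoke Lemma \ref{cvlem3} to conclude $\Psi'=G\circ(\Phi'\circ\Psi)\in W^{1,1}(a,b)$ with
\[
\Psi''=G'(\Phi'\circ\Psi)\cdot(\Phi'\circ\Psi)' = -\frac{(\Phi''\circ\Psi)\Psi'}{(\Phi'\circ\Psi)^2} = -\frac{\Phi''\circ\Psi}{(\Phi'\circ\Psi)^3}.
\]

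\medskip

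\textbf{Expected obstacle.} The only non-routine point is the Leibniz product rule in Step 1 for two merely $W^{1,1}$ (not Lipschitz) factors: the general statement that a product of $W^{1,1}$ functions lies in $W^{1,1}$ is false in higher dimensions, and in $1$D it relies crucially on continuity (hence boundedness) of $W^{1,1}$ functions on bounded intervals. I would handle it by mollification: pick smooth approximations $f_n\to u'\circ\Phi$ and $g_n\to\Phi'$ in $W^{1,1}$ with uniform $L^\infty$-control, apply the classical Leibniz rule to $f_n g_n$, and pass to the limit against test functions in the distributional sense. Once this is justified, the remaining chain-rule identities are bookkeeping based on Lemmas \ref{cvlem2} and \ref{cvlem3}.
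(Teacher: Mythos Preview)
Your proposal is correct and follows exactly the route the paper indicates: the paper gives no detailed proof of Lemma \ref{cvlem4} but simply states that it follows from Lemmas \ref{cvlem1}--\ref{cvlem3}, and your argument is a careful unpacking of precisely that. Your treatment of the one genuinely delicate point (the $W^{1,1}$ Leibniz rule in one dimension, justified via the embedding $W^{1,1}\hookrightarrow C^0\cap L^\infty$ and mollification) is appropriate and in fact more explicit than anything the paper provides.
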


\begin{lemma}\label{cvlem5}
	Let $\gamma$ be a $W^{2,1}$-curve and $\Phi$ be a $W^{2,1}$-reparameterization, i.e. $\Phi\in W^{2,1}(0,1)$, $|\Phi'|>0$ in $[0,1]$ and $\Phi([0,1])=[0,1]$.
	Then $\gamma\circ\Phi$ is a $W^{2,1}$-curve.
	Moreover, $\mathcal{B}$, $\mathcal{L}$ and $\mathcal{W}$ are invariant by $W^{2,1}$-reparameterization.
\end{lemma}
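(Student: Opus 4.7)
The plan is to reduce everything to the chain rule formulas from Lemma \ref{cvlem4} and the change-of-variables formula from Lemma \ref{cvlem1}, and then verify the pointwise identity $\kappa_{\gamma\circ\Phi}=\kappa_\gamma\circ\Phi$ by direct computation.

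First I would check that $\tilde\gamma:=\gamma\circ\Phi$ is a regular $W^{2,1}$-curve. Since $\Phi\in W^{2,1}(0,1)\hookrightarrow C^1([0,1])$ with $|\Phi'|>0$ and $\Phi([0,1])=[0,1]$, it is a $C^1$-diffeomorphism of $[0,1]$. Applying Lemma \ref{cvlem4} componentwise gives $\tilde\gamma_i\in W^{2,1}(0,1)$ with
$$\dot{\tilde\gamma}=(\dot\gamma\circ\Phi)\Phi',\qquad \ddot{\tilde\gamma}=(\ddot\gamma\circ\Phi)(\Phi')^2+(\dot\gamma\circ\Phi)\Phi''.$$
In particular $|\dot{\tilde\gamma}|=|\dot\gamma\circ\Phi|\,|\Phi'|>0$ on $[0,1]$, so $\tilde\gamma$ is a regular $W^{2,1}$-curve.

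Next I would handle $\mathcal{L}$ and $\mathcal{B}$. For the length, Lemma \ref{cvlem1} with $F=|\dot\gamma|$ gives immediately
$$\mathcal{L}[\tilde\gamma]=\int_0^1|\dot\gamma(\Phi(t))|\,|\Phi'(t)|\,dt=\int_0^1|\dot\gamma(s)|\,ds=\mathcal{L}[\gamma].$$
For the bending energy, I would substitute the chain-rule formulas above into the cross-product defining the curvature:
\begin{align*}
\tilde\gamma_1'\tilde\gamma_2''-\tilde\gamma_2'\tilde\gamma_1''
&=(\gamma_1'\circ\Phi)\Phi'\bigl[(\gamma_2''\circ\Phi)(\Phi')^2+(\gamma_2'\circ\Phi)\Phi''\bigr]\\
&\quad -(\gamma_2'\circ\Phi)\Phi'\bigl[(\gamma_1''\circ\Phi)(\Phi')^2+(\gamma_1'\circ\Phi)\Phi''\bigr]\\
&=(\Phi')^3\bigl[(\gamma_1'\gamma_2''-\gamma_2'\gamma_1'')\circ\Phi\bigr],
\end{align*}
the $\Phi''$ terms cancelling cleanly. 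Dividing by $|\dot{\tilde\gamma}|^3=|\Phi'|^3(|\dot\gamma|\circ\Phi)^3$ gives $\kappa_{\tilde\gamma}=\kappa_\gamma\circ\Phi$, and then another application of Lemma \ref{cvlem1} with $F=\kappa_\gamma^2|\dot\gamma|$ yields $\mathcal{B}[\tilde\gamma]=\mathcal{B}[\gamma]$.

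Finally I would verify invariance of $\mathcal{W}$, splitting according to the sign of $\Phi'$. If $\Phi$ is increasing, then $\Phi(0)=0$, $\Phi(1)=1$, so $\tilde\gamma(0)=\gamma(0)$, $\tilde\gamma(1)=\gamma(1)$, whence $\bar{\tilde\gamma}=\bar\gamma$ and $\dot{\tilde\gamma}(i)=\Phi'(i)\dot\gamma(i)$ with $\Phi'(i)>0$ at $i=0,1$; the positive scalar cancels in the ratio defining $\arccos(\langle\dot{\tilde\gamma},\dot{\bar{\tilde\gamma}}\rangle/|\dot{\tilde\gamma}||\dot{\bar{\tilde\gamma}}|)$, so $\theta_{\tilde\gamma}^i=\theta_\gamma^i$. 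If $\Phi$ is decreasing, then $\tilde\gamma(0)=\gamma(1)$ and $\tilde\gamma(1)=\gamma(0)$, so $\dot{\bar{\tilde\gamma}}=-\dot{\bar\gamma}$, while $\dot{\tilde\gamma}(0)=\Phi'(0)\dot\gamma(1)$ with $\Phi'(0)<0$; the two signs cancel in the inner product, yielding $\cos\theta_{\tilde\gamma}^0=\cos\theta_\gamma^1$ and symmetrically $\cos\theta_{\tilde\gamma}^1=\cos\theta_\gamma^0$. Since $\mathcal{W}$ is symmetric in its two angle arguments, $\mathcal{W}[\tilde\gamma]=\mathcal{W}[\gamma]$ in either case.

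The only delicate step is the cancellation in the curvature computation, which is notationally heavy but algebraically routine; the decreasing-$\Phi$ subcase of $\mathcal{W}$ requires a little care but is saved by the symmetry of the defining formula. Everything else is a mechanical application of Lemmas \ref{cvlem1}--\ref{cvlem4}.
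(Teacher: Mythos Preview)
Your proof is correct and is precisely the argument the paper intends: the paper itself gives no detailed proof of this lemma, merely stating that it follows from Lemmas~\ref{cvlem1}--\ref{cvlem4}, and your write-up is exactly the routine unpacking of that claim via the chain rule and change of variables. The only content beyond a direct citation is the pointwise identity $\kappa_{\gamma\circ\Phi}=\kappa_\gamma\circ\Phi$ and the endpoint bookkeeping for $\mathcal{W}$ in the orientation-reversing case, both of which you handle correctly.
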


\begin{lemma}\label{cvlem6}
	The energies $\mathcal{B}$, $\mathcal{L}$ and $\mathcal{W}$ are invariant with respect to translation, reflection and rotation.
\end{lemma}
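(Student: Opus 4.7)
The plan is to observe that each of $\mathcal{L}$, $\mathcal{B}$, $\mathcal{W}$ is built from three ingredients that transform in a controlled way under the Euclidean group: the speed $|\dot{\gamma}|$, the unsigned determinant $|\gamma_1'\gamma_2''-\gamma_2'\gamma_1''|$ appearing in the numerator of $\kappa_\gamma$, and the normalized inner product $\langle\dot{\gamma},\dot{\bar{\gamma}}\rangle/|\dot{\gamma}||\dot{\bar{\gamma}}|$ defining the boundary warp angles in Definition \ref{defcurve}. Once invariance of these three scalar quantities is established, invariance of $\mathcal{L}$, $\mathcal{B}$, and $\mathcal{W}$ follows directly from their integral/functional formulas.

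For a translation $T_v(x):=x+v$, componentwise differentiation gives $(T_v\circ\gamma)'=\gamma'$ and $(T_v\circ\gamma)''=\gamma''$, so each of the three ingredients above is literally unchanged. Since translations map segments to segments, the geodesic satisfies $\overline{T_v\gamma}=T_v\bar{\gamma}$, and the boundary warp angles computed with respect to the new geodesic agree with the old. Hence all three energies are unchanged.

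For an orthogonal map $R$ (covering both rotations and reflections), differentiation commutes with $R$, and orthogonality yields $|R\xi|=|\xi|$ and $\langle R\xi,R\eta\rangle=\langle\xi,\eta\rangle$ for all $\xi,\eta\in\mathbb{R}^2$. The first identity preserves $|\dot{\gamma}|$, and the second preserves the normalized inner product and hence each boundary warp angle. For the curvature, $\det(R\dot{\gamma},R\ddot{\gamma})=(\det R)\det(\dot{\gamma},\ddot{\gamma})$ with $|\det R|=1$, so the unsigned determinant is preserved; this is precisely the place where the absolute value in the definition of $\kappa_\gamma$ is needed to cover the reflection case $\det R=-1$. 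Linearity of $R$ gives $\overline{R\gamma}=R\bar{\gamma}$, so $\mathcal{W}$, being a function of the two warp angles alone, is invariant as well.

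There is no real obstacle here; the proof is a direct computation once the transformation rules for $\dot{\gamma}$, $\ddot{\gamma}$ and $\bar{\gamma}$ under affine isometries are written out. The only book-keeping step is to verify that composing a $W^{2,1}$-curve with an affine isometry yields another $W^{2,1}$-curve with the expected weak-derivative formulas, which follows by applying Lemma \ref{cvlem3} componentwise with $G$ affine (together with Lemma \ref{cvlem2} for the translation part).
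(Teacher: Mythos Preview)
Your argument is correct: invariance of $|\dot\gamma|$, of the unsigned determinant $|\gamma_1'\gamma_2''-\gamma_2'\gamma_1''|$, and of the normalized inner product under affine isometries is exactly what is needed, and you handle the reflection case via $|\det R|=1$. The paper gives no proof of this lemma at all---it is simply listed as a consequence of the preceding chain-rule lemmas---so your direct verification is precisely the intended (and only reasonable) approach. One minor quibble: your parenthetical appeal to Lemma~\ref{cvlem2} ``for the translation part'' is misplaced, since that lemma concerns reparameterization of the domain rather than post-composition in the target; but the $W^{2,1}$-regularity of $R\gamma+v$ is immediate from the fact that $W^{2,1}$ is a vector space, so no lemma is really needed there.
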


\section*{Acknowledgements}
The author would like to thank his supervisor Yoshikazu Giga for suggesting this problem and fruitful discussions.
The author also would like to thank Oliver Pierre-Louis, who is one of the physicists considering this model, for his useful comments and remarks from a physical viewpoint.
This work was supported by a Grant-in-Aid for JSPS Fellows 15J05166 and the Program for Leading Graduate Schools, MEXT, Japan.

\end{document}